\documentclass[11pt,reqno]{amsart}
\usepackage[left=2cm,right=2cm,top=3cm,bottom=3cm]{geometry}
\usepackage{amssymb}
\usepackage{amsmath}
\usepackage{graphicx}
\usepackage{euscript}
\usepackage{cite}
\usepackage{leftindex}
\usepackage{color}
\usepackage{tikz-cd}
\usetikzlibrary{tqft}
\usepackage{mathtools}
\usepackage{enumerate}
\usepackage{hyperref}
\hypersetup{
	colorlinks=true,       			% false: boxed links; true: colored links
	linkcolor=blue,          			% color of internal links
	citecolor=blue,		 		% color of links to bibliography
	filecolor=blue,      				% color of file links
	urlcolor=blue           			% color of external links
}
\usepackage{amsmath}
\usepackage{amsthm}
\usepackage{tikz}
\usepackage{amscd}
\usepackage{latexsym}
\usepackage{epsfig}
\usepackage{graphicx}
\usepackage{caption}
\usepackage{rotating}
\usepackage{mathtools}
\usepackage{bigints} 
\usepackage{longtable}
\usepackage{extpfeil}
\usepackage{tensor}

\newcommand{\Z}{\mathbb{Z}}

\newcommand{\R}{\mathbb{R}}
\newcommand{\C}{\mathbb{C}}

\newcommand{\T}{\mathbb{T}}
\newcommand{\g}{\mathfrak{g}}
\renewcommand{\t}{\mathfrak{t}}

\newcommand\longmapsfrom{\mathrel{\reflectbox{\ensuremath{\longmapsto}}}}

\usepackage{tcolorbox}
\usepackage{lastpage}
\usepackage{accents}
\newcommand{\sll}[1]{\mkern-4mu\mathbin{/\mkern-5mu/}_{\mkern-4mu{#1}}}

\newcommand{\h}{\mathfrak{h}}

\renewcommand{\t}{\mathfrak{t}}
\newcommand{\z}{\mathfrak{z}}

\newcommand{\ad}{\mathrm{ad}}
\newcommand{\Ad}{\mathrm{Ad}}
\renewcommand{\exp}{\mathrm{exp}}

\newcommand\junk[1]{}

\numberwithin{equation}{section}

\newtheorem{theorem}{Theorem}[section]
\newtheorem*{theorem*}{Theorem}
\newtheorem{proposition}[theorem]{Proposition}
\newtheorem{corollary}[theorem]{Corollary}
\newtheorem{lemma}[theorem]{Lemma}

\theoremstyle{definition}
\newtheorem{definition}[theorem]{Definition}
\newtheorem{example}[theorem]{Example}
\newtheorem{remark}[theorem]{Remark}

\title[Generalized Gelfand--Cetlin data and abelianization]{Generalized Gelfand--Cetlin data and abelianization}

\author[P. Crooks]{Peter Crooks}
\address{Department of Mathematics and Statistics \\ Utah State University \\ Logan, Utah, USA}
\email{peter.crooks@usu.edu}

\author[M. Pound]{Mitchell Pound}
\address{Department of Mathematics and Statistics \\ Utah State University \\ Logan, Utah, USA}
\email{mitchell.pound@usu.edu}

\subjclass{53D20 (primary); 22E46 (secondary)}
\keywords{abelianization, Hamiltonian action, symplectic reduction}

\begin{document}
	
	\begin{abstract}
		
		\noindent This manuscript aims to advance existing approaches to the abelianization of symplectic quotients. Two such approaches include the imploded cross-sections of Guillemin--Jeffrey--Sjamaar, and the Gelfand--Cetlin abelianizations of the first author and Weitsman. Using our notion of generalized Gelfand--Cetlin data, we generalize the second approach to account for irregular values of the moment map. Connections to symplectic implosion are emphasized throughout this manuscript.
		
	\end{abstract}
	\maketitle
	
	%%%%%%%%%%%%%%%%%%%%%%%%%%%%%%%%%%%%%%%%%%%%%%%%%%%%%%%%%%%%%%%%%%%
	%%%%%%%%%%%%%%%%%%%%%%%  SECTION 1  %%%%%%%%%%%%%%%%%%%%%%%%%%%%%%%
	%%%%%%%%%%%%%%%%%%%%%%%%%%%%%%%%%%%%%%%%%%%%%%%%%%%%%%%%%%%%%%%%%%%
	\section{Introduction}
	\subsection{Broad context}
	In 1974, Marsden and Weinstein developed an approach to symmetry reduction in symplectic geometry \cite{marsden1974Reduction}. Their reduction procedure is often studied in the context of a compact connected Lie group $G$ with Lie algebra $\g$.
	A second ingredient is a Hamiltonian $G$-space $(M, \omega)$ with moment map $\mu:M \longrightarrow \g^*$. 
	One fixes an element $\xi \in \g^*$, with $G$-stabilizer denoted $G_\xi \subset G$.
	Assume that $G_\xi$ acts freely on the level set $\mu^{-1}(\xi)$, in which case $\mu^{-1}(\xi)$ is a submanifold of $M$.
	The orbit space $\mu^{-1}(\xi)\slash G_\xi$
	is a smooth manifold carrying a unique symplectic form
	$\overline{\omega} \in \Omega^2(\mu^{-1}(\xi) \slash G_\xi)$ satisfying $\pi^*\overline{\omega} = j^*\omega$, where \mbox{$\pi:\mu^{-1}(\xi) \longrightarrow \mu^{-1}(\xi)\slash G_\xi$} is the quotient and $j:\mu^{-1}(\xi)\longrightarrow M$ is the inclusion.
	One calls 
	$$
	M\sll{\xi} G \coloneqq \left(\mu^{-1}(\xi) \slash G_\xi , \, \overline{\omega} \right)
	$$
	the \textit{symplectic} or \textit{Marsden--Weinstein reduction} \cite{marsden1974Reduction} of $M$ by $G$ at level $\xi \in \g^*$.
	
	Marsden--Weinstein reduction has been central to the last 50 years of progress in Hamiltonian symplectic geometry. Generalizations of this process include Marsden--Ratiu reduction \cite{mar-rat:86}, Mikami--Weinstein reduction \cite{mik-wei:88}, the stratified symplectic quotients of Sjamaar--Lerman \cite{sjamaar1991Stratified}, and symplectic reduction along a pre-Poisson submanifold \cite{crooks2022Symplectic}. On the other hand, variants of Marsden--Weinstein reduction include symplectic cutting \cite{ler:95} and symplectic implosion \cite{gui-jef-sja:02}. The latter is a systematic approach to ``abelianizing" the symplectic quotients of a Hamiltonian $G$-space.
	
	\subsection{Abelianization in Hamiltonian symplectic geometry}
	In the interest of clarity, we briefly review the theme of abelianization in Hamiltonian symplectic geometry. One begins with a result in representation theory: the finite-dimensional complex representations of a compact connected Lie group $G$ are completely determined by their restrictions to a maximal torus $T\subset G$. This can be viewed as a kind of ``abelianization phenomenon" in representation theory, in which $G$ is replaced by the abelian group $T$. Guillemin--Jeffrey--Sjamaar \cite{gui-jef-sja:02} introduce \textit{symplectic implosion} as a symplecto-geometric analogue. Each Hamiltonian $G$-space $M$ thereby determines a stratified Hamiltonian $T$-space $M_{\text{impl}}$, whose symplectic quotients by $T$ recover those of $M$ by $G$. We view the symplectic quotients of $M_{\text{impl}}$ by $T$ as abelianizing those of $M$ by $G$. 
	
	One should note that $M_{\text{impl}}$ is typically singular and of dimension smaller than that of $M$; it is the affinization of $G_{\mathbb{C}}/U$ for $M=T^*G$, where $G_{\mathbb{C}}$ is the complexification of $G$ and $U\subset G_{\mathbb{C}}$ is a maximal unipotent subgroup. An alternative to symplectic implosion is to seek a space $N$ more reminiscent of $M$, a Hamiltonian action of a torus $S$ larger than $T$ on this space, and a presentation of every symplectic quotient of $M$ by $G$ as one of $N$ by $S$. This is precisely the approach adopted by the first author and Weitsman \cite{crooks2024Gelfand}. In that paper, the authors present completely integrable systems on $\g^*$ that induce Hamiltonian torus actions on open subsets of $M$. Examples of such systems include the Gelfand--Cetlin systems of Guillemin--Sternberg for unitary and special orthogonal groups \cite{guillemin1983GelfandCetlin}, and more recent constructions of Hoffman--Lane \cite{hoffman2025stratified}. The first author and Weitsman show that symplectic quotients of $M$ at levels in an open dense subset of $\g^*$ are realizable as symplectic quotients of an open subset $M_{\text{reg}}\subset M$ by a large torus \cite{crooks2024Gelfand}.
	
	\subsection{Subtleties involved in a generalization}
	One deficiency of the first author and Weitsman's approach is that it only applies to levels in an open dense subset of $$\g^*_{\text{reg}}\coloneqq\{\xi\in\g^*:\dim(\g_{\xi})=\mathrm{rank}(\g)\}.$$ There are Hamiltonian $G$-spaces $M$ for which the moment map $\mu$ does not attain any values in $\g^*_{\text{reg}}$; coadjoint orbits in $\g^*\setminus \g^*_{\text{reg}}$ are prime examples. Another issue is that a connected Hamiltonian $G$-space $M$ with this property cannot satisfy the hypotheses of Marsden--Weinstein reduction at generic values of its moment map. This fact is witnessed by the non-trivial subgroup $[G_{\mu(m)},G_{\mu(m)}]\subset G$ acting trivially on $\mu^{-1}(\mu(m))$ for all $m$ in an open dense subset of $M$. The first author and Weitsman only consider reduction at levels for which this subgroup is trivial.
	
	\subsection{Summary of results}
	We introduce \textit{generalized Gelfand--Cetlin data} to abelianize the symplectic quotients of an arbitrary Hamiltonian $G$-space, in a matter that addresses the subtleties mentioned above. In this process, we generalize aspects of \cite{crooks2024Gelfand}, solidify connections to symplectic implosion \cite{gui-jef-sja:02}, and delve deeper into the recent work of Hoffman--Lane \cite{hoffman2025stratified}. The following is a summary of our main results.
	
	Let $G$ be a compact connected Lie group with Lie algebra $\g$. Suppose that $M$ is a non-empty, connected Hamiltonian $G$-space with moment map $\mu:M\longrightarrow\g^*$. There exists a unique orbit-type stratum $\Sigma\subset\g^*$ for which $\mu(M)\cap\Sigma$ is non-empty and dense in $\mu(M)$. In Definition \ref{gen_gsdatum}, we use this stratum to define a \textit{generalized Gelfand--Cetlin datum} for $M$. The work of Hoffman--Lane \cite{hoffman2025stratified} implies that such data exist; see Section \ref{Subsection: Existence}. On the other hand, a generalized Gelfand--Cetlin datum on $M$ yields an open subset $\g^*_{M,\mathrm{reg}}\subset\Sigma$, a Hamiltonian action of large torus $\mathbb{T}_{\mathrm{big}}$ on
	$M_{\mathrm{reg}}\coloneqq\mu^{-1}(\g^*_{M,\mathrm{reg}})$, and a continuous Poisson moment map $\lambda_{\mathrm{big}}:\g^*\longrightarrow\mathrm{Lie}(\mathbb{T}_{\mathrm{big}})\eqqcolon \mathbb{R}_{\mathrm{big}}$.
	The following is our main result; it appears in the main text as Theorem \ref{Theorem: Main result}.
	
	\begin{theorem*}
		Let $M$ be a Hamiltonian $G$-space with moment map $\mu:M \longrightarrow \g^*$.
		Suppose that $(\lambda_\textup{big}, \g^*_{M\textup{-reg}})$ is a generalized Gelfand-Cetlin datum for $M$.
		For $\xi \in \g^*_{M\textup{-reg}}$, there is a canonical isomorphism of stratified symplectic spaces
		$$
		M \sll{\xi} G \simeq M_\textup{reg} \sll{\lambda_\textup{big}(\xi)}\T_\textup{big}.
		$$
	\end{theorem*}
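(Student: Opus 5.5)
The plan is to factor the isomorphism through an intermediate reduction by a torus. I will first pass from $M\sll{\xi}G$ to a reduction of $M_{\mathrm{reg}}$ by the abelianization of $G_\xi$. Then I will identify that with the reduction by $\T_{\mathrm{big}}$ by analysing the fibres of $\lambda_{\mathrm{big}}$ on $\g^*_{M,\mathrm{reg}}$.

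Since $\g^*_{M,\mathrm{reg}}$ is open in the stratum $\Sigma$ and contains $\xi$, every $\eta\in\Sigma$ has a $G$-stabilizer conjugate to $G_\xi$. The subgroup $[G_\xi,G_\xi]$ acts trivially on $\mu^{-1}(\xi)$, as noted in the introduction for generic points; I expect this to hold on all of $\mu^{-1}(\Sigma)$ by the symplectic cross-section theorem. Hence
$$M\sll{\xi}G=\mu^{-1}(\xi)/G_\xi=\mu^{-1}(\xi)/T_\xi,$$
where $T_\xi\coloneqq G_\xi/[G_\xi,G_\xi]$ is a torus, and this holds as stratified symplectic spaces in the sense of Sjamaar--Lerman.

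Next I unpack the datum of Definition \ref{gen_gsdatum}. On the open set $\g^*_{M,\mathrm{reg}}$, the map $\lambda_{\mathrm{big}}$ should be smooth, and its components should generate a Hamiltonian action of $\T_{\mathrm{big}}$ on $\g^*_{M,\mathrm{reg}}$ preserving symplectic leaves. Pulling back by $\mu$ gives the $\T_{\mathrm{big}}$-action on $M_{\mathrm{reg}}$ with moment map $\lambda_{\mathrm{big}}\circ\mu$. This is the Poisson-moment-map composition argument, as in \cite{crooks2024Gelfand}, which I would take as already set up in the paper.

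The key claims are then the following.
\begin{enumerate}[(i)]
\item The fibre $\lambda_{\mathrm{big}}^{-1}(\lambda_{\mathrm{big}}(\xi))\cap\g^*_{M,\mathrm{reg}}$ is a single $\T_{\mathrm{big}}$-orbit through $\xi$. This should follow from complete integrability on the leaves of $\Sigma$ (coadjoint orbits of type $G/G_\xi$) together with the regularity built into $\g^*_{M,\mathrm{reg}}$.
\item The $\T_{\mathrm{big}}$-stabilizer of $\xi$, acting on $\mu^{-1}(\xi)$, acts through a surjection onto $T_\xi$ whose kernel acts trivially. The point is that the flows of $\lambda_{\mathrm{big}}$ at $\xi$ restricted to the fibre are generated by elements of $\z(\g_\xi)$, which exponentiate to the centre of $G_\xi$ and hence onto $T_\xi$ up to a finite cover.
\end{enumerate}

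Granting (i) and (ii), I would write $(\lambda_{\mathrm{big}}\circ\mu)^{-1}(\lambda_{\mathrm{big}}(\xi))=\T_{\mathrm{big}}\cdot\mu^{-1}(\xi)$. This yields a homeomorphism
$$\mu^{-1}(\xi)/\mathrm{Stab}_{\T_{\mathrm{big}}}(\xi)\longrightarrow (\lambda_{\mathrm{big}}\circ\mu)^{-1}(\lambda_{\mathrm{big}}(\xi))/\T_{\mathrm{big}},$$
and the left side equals $\mu^{-1}(\xi)/T_\xi$. To finish, I would check three things: that the map respects orbit-type strata, that it pulls back the reduced forms on each stratum (both forms are induced from $\omega$ on $\mu^{-1}(\xi)$), and that it identifies the smooth structures (the Poisson algebras of invariant functions).

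The main obstacle is (ii): controlling the stabilizer of $\xi$ in $\T_{\mathrm{big}}$ and showing that it acts on $\mu^{-1}(\xi)$ exactly through $T_\xi$, including connectedness and finite-kernel issues. I would attack it by working in a slice $G\times_{G_\xi}$ (cross-section) model near $\mu^{-1}(\xi)$. There I would compute the Hamiltonian vector fields of the components of $\lambda_{\mathrm{big}}\circ\mu$ as $\mu$-pullbacks of $d\lambda_{\mathrm{big}}$, valued in $\g_\eta$ at $\mu(m)=\eta$. The aim is to show that along $\mu^{-1}(\xi)$ they span precisely $\z(\g_\xi)$, which is where the defining properties of the generalized datum must enter.
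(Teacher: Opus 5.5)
Your outline is essentially the paper's proof. The paper likewise shows that $(m,t)\mapsto t\cdot m$ is a homeomorphism $\mu^{-1}(\xi)\times\T_{\mathrm{int}}\to\lambda_M^{-1}(\lambda_{\mathrm{big}}(\xi))$, identifies $G_\xi/[G_\xi,G_\xi]$ with $\T_{\mathrm{small}}$ compatibly with the two actions on $\mu^{-1}(\xi)$, and then descends to quotients, matches orbit-type strata, and compares reduced forms by pulling back along the stratum quotient maps.

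The steps you flag as obstacles do not need complete integrability or a slice model; they come directly from Definition~\ref{gen_gsdatum}. In particular, complete integrability by itself would not make the fibres single orbits, so your suggested justification of (i) does not work as stated.

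\begin{itemize}
\item Condition (v) makes $\T_{\mathrm{int}}$ act freely and transitively on each fibre of $\lambda_{\mathrm{big}}$ in $\g^*_{M\text{-reg}}$, which gives your (i). Since $\lambda_1,\dots,\lambda_\ell$ are Casimirs on $\Sigma$, it also shows that the $\T_{\mathrm{big}}$-stabilizer of $\xi$ is exactly $\T_{\mathrm{small}}\times\{e\}$.
\item Condition (ii), the lattice-basis requirement, makes $\kappa_\xi$ integrate to an isomorphism $\tau_\xi\colon G_\xi/[G_\xi,G_\xi]\to\T_{\mathrm{small}}$. So the finite-cover ambiguity you worry about does not arise.
\end{itemize}

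Your Hamiltonian vector field computation along $\mu^{-1}(\xi)$ is essentially the Crooks--Weitsman argument the paper invokes to see that the two actions agree.
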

	
	By specializing to the case where $\Sigma = \g^*_\text{reg}$, we recover first author and Weitsman's abelianization theorem \cite{crooks2024Gelfand}.
	Illustrative examples are included throughout the manuscript.
	
	\subsection*{Organization}
	Each section begins with an overview of its contents. Our main result is proved in the main text as Theorem \ref{Theorem: Main result}.
	
	\subsection*{Acknowledgements} P.C. was supported by the National Science Foundation grant DMS-2454103 and Simons Foundation grant MP-TSM-00002292.
	
	%%%%%%%%%%%%%%%%%%%%%%%%%%%%%%%%%%%%%%%%%%%%%%%%%%%%%%%%%%%%%%%%%%%
	%%%%%%%%%%%%%%%%%%%%%%%  SECTION 2  %%%%%%%%%%%%%%%%%%%%%%%%%%%%%%%
	%%%%%%%%%%%%%%%%%%%%%%%%%%%%%%%%%%%%%%%%%%%%%%%%%%%%%%%%%%%%%%%%%%%
	\section{Preliminaries and conventions}
	We now collect some of the Lie-theoretic and Poisson-geometric machinery underlying this work. In \ref{sec:lie-theory}, we establish relevant conventions on compact connected Lie groups. This leads to Section \ref{sec:Unitary_and_tori}, where we specialize address specific conventions for unitary groups and tori. We review the orbit-type stratification for a compact Lie group action in Section \ref{group_actions_on_manifold}. In Section \ref{Subsection: Hamiltonian geometry}, we review the salient parts of Hamiltonian actions on Poisson manifolds.
	
	\subsection{General compact, connected Lie groups}\label{sec:lie-theory}
	Let $G$ be a compact connected Lie group with Lie algebra $\g$ and rank $\ell$. 
	Consider the adjoint representations 
	\begin{align*}
		\text{Ad}:G \times \g \longrightarrow \g \quad\text{and}\quad \text{ad}:\g \times \g \longrightarrow \g,
	\end{align*}
	as well as the corresponding coadjoint representations
	\begin{align*}
		\text{Ad}^*:G \times \g^* \longrightarrow \g^* \quad\text{and}\quad \text{ad}^*:\g \times \g^* \longrightarrow \g^*.
	\end{align*}
	For $x\in\g$ and $\xi\in\g^*$, we have the $G$-stabilizers $$G_x\coloneqq\{g\in G:\Ad_g(x)=x\}\quad\text{and}\quad G_{\xi}\coloneqq\{g\in G:\mathrm{Ad}_g^*(\xi)=\xi\},$$ and $\g$-centralizers
	$$\g_x\coloneqq\{y\in\g:\ad_y(x)=0\}\quad\text{and}\quad\g_{\xi}\coloneqq\{y\in\g:\ad_y^*(\xi)=0\}.$$
	
	Choose a maximal torus $T \subset G$ with Lie algebra $\t \subset \g$. 
	Consider the complexifications $\g_\mathbb{C}:=\g\otimes_\R \C$ and $\t_\C := \t\otimes_\R \C$.
	Write $\Phi \subset \t_\C^*$ for the roots of $\g_\C$ with respect to $\t_\C$, and choose a collection $\Phi^+ \subset \Phi$ of positive roots.
	This choice gives rise to the fundamental Weyl chamber
	$$
	\t_+:= \{ x \in \t \ | \ -i\alpha(x) \geq 0 \ \forall \alpha \in \Phi^+ \}.
	$$
	It is known to be a polyhedral cone and fundamental domain for the adjoint action of $G$. 
	
	As $G$ is compact, we can choose an inner product $\langle -,- \rangle:\g\otimes_{\R}\g\longrightarrow\R$ on $\g$ that is invariant under the adjoint action. 
	It follows that 
	\begin{equation}\label{innerprod_iso}
		\begin{aligned}
			( - )^\vee: \g &\longrightarrow \g^*\\
			x &\longmapsto \langle x, - \rangle 
		\end{aligned}
	\end{equation}
	is a $G$-module isomorphism.
	We then define the fundamental Weyl chamber in $\g^*$ as 
	$$\t_+^* \coloneqq (-)^\vee (\t_+) \subset \g^*.$$
	This polyhedral cone is a fundamental domain for the coadjoint action of $G$.
	
	The \textit{regular loci} are defined by
	\[
	\g_{\mathrm{reg}} \coloneq \{ x \in \g \ | \ \dim \g_x = \ell \} \qquad \text{and} \qquad 
	\g^*_{\textup{reg}} \coloneq \{ \xi \in \g^* \ | \ \dim \g_\xi = \ell \}.
	\]
	Note that $\dim \g_x \geq \ell$ and $\dim \g_\xi \geq \ell$ for all $x \in \g$ and all $\xi \in \g^*$. 
	An element $x \in \g$ (resp. $\xi \in \g^*$) is in $\g_{\mathrm{reg}}$ (resp. $\g^*_{\mathrm{reg}}$) if and only if $\g_x$ (resp. $\g_\xi$) is Cartan subalgebra of $\g$. 
	Equivalently, $x \in \g$ (resp. $\xi \in \g^*$) is a regular element if and only if $G_x$ (resp. $G_\xi$) is a maximal torus of $G$. 
	\subsection{Unitary groups and tori}\label{sec:Unitary_and_tori}
	Let $n$ be a non-negative integer. 
	One especially important compact connected Lie group is the unitary group 
	$$
	\text{U}(n) \coloneqq \{A \in \text{GL}(n, \C) \ | \ A^{*}A = I\}.
	$$
	Its Lie algebra $\mathfrak{u}(n)$ is the real vector space of skew-Hermitian $n\times n$ matrices, equipped with the commutator bracket.
	On the other hand, let $\mathcal{H}(n)$ denote the real vector space of $n \times n$ Hermitian matrices.
	Conjugation by elements of $\text{U}(n)$ renders $\mathcal{H}(n)$ a real $\text{U}(n)$-module.
	The bilinear pairing
	\begin{align*}
		\mathfrak{u}(n) \otimes_\R \mathcal{H}(n) &\longrightarrow \R\\
		x \otimes \xi &\longmapsto -i\text{tr}(x \xi)
	\end{align*}
	is non-degenerate and $\text{U}(n)$-invariant.
	It therefore identifies $\mathfrak{u}(n)^*$ and $\mathcal{H}(n)$ as $\text{U}(n)$-modules. 
	In the special case where $n=1$, we have $\mathfrak{u}(1) = i\R$ and $\mathfrak{u}(1)^* =\mathcal{H}(1)=\R$. 
	
	Throughout this paper, we define a torus to be a compact connected abelian Lie group. 
	As such, it is isomorphic to $\text{U}(1)^k$ for some $k \in \Z_{\geq 0}$. 
	If $T = \text{U}(1)^k$, its Lie algebra is $\t= \mathfrak{u}(1)^k = (i\R)^k$ with dual $\t^* = (\mathfrak{u}(1)^*)^k = \R^k$.
	
	Let $G$ be a compact connected Lie group with Lie algebra $\g$ and rank $\ell$.
	For any torus $T \subset G$ with Lie algebra $\t \subset \g$, the exponential map $\exp:\g \longrightarrow G$ restricts to a surjective Lie group homomorphism $\exp\big\vert_{\t} :\t \longrightarrow T$. 
	The integral lattice
	$$
	\Lambda_{\t}\coloneqq \frac{1}{2\pi}\ker(\exp\big\vert_{\t}) \subset \t
	$$
	is a free $\Z$-submodule with rank at most $\ell$. 
	\subsection{Group actions on manifolds}\label{group_actions_on_manifold}
	Let $G$ be a Lie group with Lie algebra $\g$.
	Suppose that $M$ is a smooth manifold equipped with a smooth (left) action of $G$ .
	The generating vector field of $x \in \g$ is defined by
	$$
	(x_M)_p := \frac{d}{dt}\bigg\vert_{t=0} (\exp(-tx)\cdot p)
	$$
	for all $p \in M$. The stabilizer subgroup for $p \in M$ is
	$$
	G_p \coloneqq \{ g \in G \ | \ g \cdot p = p \}.
	$$
	For any closed subgroup $H \subset G$, we define the subset
	$$
	M_H \coloneqq \{p \in M \ | \ H = G_p\}.$$
	
	Let $G$ act on the set of its closed subgroups by conjugation. In this context, we have the following result. 
	\begin{definition}\label{points_of_orbit_type}
		Let $\mathcal{C}$ be a conjugacy class of closed subgroups of $G$.
		The set of points in $M$ of \textit{orbit-type} $\mathcal{C}$ is defined by
		$$
		M_\mathcal{C}\coloneqq \{ p \in M \ | \ G_p \in \mathcal{C} \}.
		$$
		We say that $\mathcal{C}$ is an \textit{orbit-type of} $M$ if $M_\mathcal{C} \neq \emptyset$.
	\end{definition}
	
	Write $(H)$ for conjugacy class of a closed subgroup $H \subset G$. It follows that the set of points of orbit type $(H)$ is 
	$$
	M_{(H)} = \{p \in M \ | \ (H) = (G_p) \}.
	$$
	Note that $M_{(H)}$ is the $G$-saturation of $M_H$.
	
	\subsection{Hamiltonian geometry}\label{Subsection: Hamiltonian geometry}
	
	Let $(M, \omega)$ be a symplectic manifold with structure sheaf $\mathcal{O}_M$. The Hamiltonian vector field of $f\in\mathcal{O}_M$ is the given by $X_f:=-(\omega^\vee)^{-1}(\mathrm{d}f)$, where $\omega^{\vee}:TM\longrightarrow T^*M$ is the vector bundle isomorphism obtained by contracting $\omega$ with tangent vectors in the first argument. Suppose also that $M$ is equipped with a smooth action of a Lie group $G$ with Lie algebra $\g$.
	For any smooth map $\mu:M\longrightarrow \g^*$ and element $x \in \g$, let $\mu^x:M \longrightarrow \R$ denote the pointwise paring of $\mu$ with $x$.
	One calls $(M,\omega,\mu)$ a \textit{Hamiltonian $G$-space} if $\omega$ is $G$-invariant, $\mu$ is $G$-equivariant, and $X_{\mu^x}=x_M$ for all $x \in \g$.
	In this case, the map $\mu:M \longrightarrow \g^*$ is called the \textit{moment map}.
	
	Next consider a smooth manifold $M$ with bivector field $\sigma \in H^0(M, \wedge^2TM)$. 
	One calls $(M, \sigma)$ a Poisson manifold if 
	$$
	\{-,-\}: \mathcal{O}_M \times \mathcal{O}_M \longrightarrow \mathcal{O}_M, \quad \{f,g\} := \sigma(\mathrm{d}f, \mathrm{d}g)
	$$
	renders the structure sheaf $\mathcal{O}_M$ a sheaf of Poisson algebras.
	Assume this to be the case.
	We may contract $\sigma$ with cotangent vectors in the first argument to get the vector bundle morphism $\sigma^\vee:T^*M \longrightarrow TM$. 
	If $\sigma^\vee$ is a vector bundle isomorphism, then we call $\sigma$ non-degenerate. 
	In this case, there is a unique symplectic form $\omega \in \Omega^2(M)$ such that $(\sigma^\vee)^{-1} = -\omega^\vee$. 
	This gives a bijection between symplectic structures on $M$ and non-degenerate Poisson structures on $M$. 
	
	For a Poisson manifold $(M, \sigma)$, the Hamiltonian vector field of a smooth function $f \in \mathcal{O}_M$ is defined by $X_f:=\sigma^\vee(\mathrm{d}f)$.
	This generalizes the above definition of a Hamiltonian vector field.
	A Poisson Hamiltonian $G$-space is then defined in a similar manner to the symplectic case. 
	In more detail, suppose that a Lie group $G$ with Lie algebra $\g$ acts smoothly on a Poisson manifold $(M, \sigma)$.
	Let $\mu:M\longrightarrow \g^*$ be a smooth map.
	One calls $(M, \sigma,\mu)$ a \textit{Poisson Hamilton $G$-space} if $\sigma$ is $G$-invariant, $\mu$ is $G$-equivariant, and $X_{\mu^x} = x_M$ for all $x \in \g$.
	As above, $\mu$ is called the \textit{moment map}.
	Note that a Hamiltonian $G$-space is simply a Poisson Hamiltonian $G$-space for which the Poisson structure is non-degenerate. 
	
	\begin{example}
		Let $G$ be a Lie group with Lie algebra $\g$. The dual $\g^*$ has a canonical Poisson Hamiltonian $G$-space structure. 
		Given $f_1, f_2 \in \mathcal{O}_{\g^*}$, the Poisson bracket is defined as
		$$
		\{f_1, f_2\}(\xi) \coloneqq \xi\big([(\mathrm{d} f_1)_\xi, (\mathrm{d} f_2)_\xi]\big).
		$$
		The coadjoint action of $G$ is Hamiltonian with moment map the identity $\g^* \longrightarrow \g^*$. 
	\end{example}
	
	We now briefly review Marsden--Weinstein reduction for Hamiltonian $G$-spaces \cite{marsden1974Reduction}. The interested reader is referred to \cite{mar-rat:86} for a generalization to Poisson Hamiltonian $G$-spaces.
	
	\begin{theorem}[Marsden--Weinstein]\label{Theorem: MW}
		Let $G$ be a Lie group with Lie algebra $\g$. Suppose that $(M,\omega,\mu)$ is a Hamiltonian $G$-space. Fix a point $\xi\in\g^*$, and write $\pi:\mu^{-1}(\xi)\longrightarrow\mu^{-1}(\xi)/G_{\xi}$ and $j:\mu^{-1}(\xi)\longrightarrow M$ for the quotient and inclusion, respectively. 
		\begin{itemize}
			\item[\textup{(i)}] If the action of $G_{\xi}$ on $\mu^{-1}(\xi)$ is free, then $\mu^{-1}(\xi)$ is a submanifold of $M$.
			\item[\textup{(ii)}] If the action of $G_{\xi}$ on $\mu^{-1}(\xi)$ is free and proper, then the quotient topological space $\mu^{-1}(\xi)/G_{\xi}$ inherits a unique symplectic manifold structure $(\mu^{-1}(\xi)/G_{\xi},\omega_{\xi})$ for which $\pi$ is a submersion and $\pi^*\omega_{\xi}=j^*\omega$.
		\end{itemize}
	\end{theorem}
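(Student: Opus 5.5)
We prove (i) and (ii) in turn. The main tool is the linear-algebraic identity $\ker(d\mu_p)=(T_p(G\cdot p))^{\omega}$, together with its dual $\mathrm{im}(d\mu_p)=\mathfrak{g}_p^{\circ}$, the annihilator of the stabilizer algebra $\mathfrak{g}_p$. Both follow from the moment map condition
$$\langle d\mu_p(v),x\rangle=d\mu^x_p(v)=\omega_p\bigl((x_M)_p,v\bigr)$$
(up to sign convention), valid for all $x\in\mathfrak g$ and $v\in T_pM$.

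For (i), fix $p\in\mu^{-1}(\xi)$. Freeness of $G_\xi$ at $p$ gives $G_\xi\cap G_p=\{e\}$. By equivariance $G_p\subset G_\xi$, since $\mathrm{Ad}^*_g\mu(p)=\mu(gp)=\mu(p)$ for $g\in G_p$. Hence $G_p$ is trivial, and so $\mathfrak g_p=0$. The identity then shows $d\mu_p$ is surjective. Thus $\xi$ is a regular value of $\mu$ on a neighbourhood of $\mu^{-1}(\xi)$, and the regular value theorem makes $\mu^{-1}(\xi)$ an embedded submanifold with
$$T_p\mu^{-1}(\xi)=\ker d\mu_p=(T_p(G\cdot p))^{\omega}.$$

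For (ii), a free and proper action of $G_\xi$ on the manifold $\mu^{-1}(\xi)$ has a smooth quotient. By the quotient manifold theorem (slice theorem), $\mu^{-1}(\xi)/G_\xi$ has a unique smooth structure making $\pi$ a submersion. We then descend $j^*\omega$.
\begin{itemize}
\item The form $j^*\omega$ is $G_\xi$-invariant, because $\omega$ is $G$-invariant and $G_\xi$ preserves $\mu^{-1}(\xi)$.
\item The form $j^*\omega$ is horizontal: for $x\in\mathfrak g_\xi$ and $v\in T_p\mu^{-1}(\xi)$, we have $\omega((x_M)_p,v)=\pm\langle d\mu_p(v),x\rangle=0$.
\end{itemize}
A basic form descends along a submersion with connected fibres. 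The fibres here are $G_\xi$-orbits, which need not be connected, but invariance under the whole group handles this. So there is a unique $\omega_\xi$ with $\pi^*\omega_\xi=j^*\omega$. Closedness follows from $\pi^*d\omega_\xi=d j^*\omega=0$ and injectivity of $\pi^*$ for a surjective submersion.

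The main obstacle is nondegeneracy of $\omega_\xi$. This reduces to computing the kernel of $\omega$ restricted to $W:=T_p\mu^{-1}(\xi)$. Set $V=T_p(G\cdot p)$, so that $W=V^{\omega}$. Then
$$W\cap W^{\omega}=V^{\omega}\cap V=T_p(G\cdot p)\cap T_p\mu^{-1}(\xi).$$
A vector $(x_M)_p$ is tangent to the level set exactly when $d\mu_p((x_M)_p)=-\mathrm{ad}^*_x\xi$ vanishes, up to sign, i.e.\ when $x\in\mathfrak g_\xi$. So the kernel is $T_p(G_\xi\cdot p)=\ker d\pi_p$, which gives nondegeneracy. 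Uniqueness of the structure is immediate from $\pi$ being a surjective submersion.
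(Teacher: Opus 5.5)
Your proof is correct. It is the standard Marsden--Weinstein argument: $\mathrm{im}(\mathrm{d}\mu_p)=\mathrm{ann}(\mathfrak{g}_p)$ with $\mathfrak{g}_p=0$ gives (i), and (ii) follows from the quotient manifold theorem, descent of the $G_\xi$-basic form $j^*\omega$, and the identification $\ker (j^*\omega)_p = T_p(G\cdot p)\cap T_p\mu^{-1}(\xi)=T_p(G_\xi\cdot p)=\ker \mathrm{d}\pi_p$ for nondegeneracy. The paper gives no proof of its own here and simply cites Marsden--Weinstein, whose argument proceeds along these same lines.
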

	
	When the assumptions of Theorem \ref{Theorem: MW}(ii) are satisfied, we set
	$$M\sll{\xi}G\coloneqq (\mu^{-1}(\xi)/G_{\xi},\omega_{\xi}).$$
	%%%%%%%%%%%%%%%%%%%%%%%%%%%%%%%%%%%%%%%%%%%%%%%%%%%%%%%%%%%%%%%%%%%
	%%%%%%%%%%%%%%%%%%%%%%%  SECTION 3  %%%%%%%%%%%%%%%%%%%%%%%%%%%%%%%
	%%%%%%%%%%%%%%%%%%%%%%%%%%%%%%%%%%%%%%%%%%%%%%%%%%%%%%%%%%%%%%%%%%%
	
	\section{Strata of smooth manifolds}\label{sec:stratifications}
	Consider a compact connected Lie group $G$ with Lie algebra $\mathfrak{g}$, a Hamiltonian $G$-space $(M,\omega,\mu)$, and an element $\xi \in \g^*$. 
	Suppose that the action of $G_\xi$ on $\mu^{-1}(\xi)$ is not free. 
	One finds that $\mu^{-1}(\xi)$ need not be a submanifold of $M$.
	Even in cases where $\mu^{-1}(\xi)$ is a submanifold of $M$, the failure of freeness means that $\mu^{-1}(\xi)/G_\xi$ need not admit a smooth manifold structure for which the quotient map $\mu^{-1}(\xi) \longrightarrow \mu^{-1}(\xi)/G_\xi$ is a submersion.
	One would still like to define $M /\!\!/_\xi G$ when the usual hypothesis of Marsden--Weinstein reduction may not be satisfied.
	Sjamaar--Lerman \cite{sjamaar1991Stratified} accomplish this via their framework of \textit{stratified symplectic spaces}.
	Their approach is to stratify $\mu^{-1}(\xi)/G_\xi$ into a disjoint union of symplectic manifolds that can be studied independently. 
	In what follows, we briefly review their constructions and relevant background material.
	Our definitions and conventions are as presented by Sjamaar--Lerman \cite{sjamaar1991Stratified}. 
	
	We first consider stratifications of general $G$-manifolds;
	relevant definitions for stratification of smooth manifolds appear in Section \ref{strat_on_smooth_man}.
	In Section \ref{sec:orbit-type-strata-section}, we focus on the orbit-type stratification \cite{duistermaat}. The special case of the adjoint representation is considered in Section \ref{Subsection: The case}.
	Section \ref{sec:orbit-type-and-reduction-section} then addresses how the orbit-type stratification relates to symplectic reduction.
	This is further developed in Section \ref{sec:principal-stratum-section}, where we connect the orbit-type stratification of $\g^*$ to the geometry of moment maps.

	\subsection{Stratification on smooth manifolds}\label{strat_on_smooth_man}
	\begin{definition}
		A \textit{decomposed space} consists of a paracompact Hausdorff topological space $X$, a partially ordered set $(I, \leq)$, and a locally finite collection of disjoint, locally closed manifolds $\{X_i\}_{i \in I}$ in $X$, called \textit{pieces}, such that the following conditions are satisfied:
		\begin{enumerate}[(i)]
			\item $X = \bigcup_{i \in I} X_i$;
			\item for $i,j \in I$,
			$$ 
			X_i \cap \overline {X_j} \neq \emptyset \Longleftrightarrow X_i \subset \overline{X_j} \Longleftrightarrow i \leq j.
			$$
		\end{enumerate}
	\end{definition}
	We sometimes write $X$ or $(X,I)$ for this space, understanding the remaining data as being implicitly present.
	If $X_i \subset \overline X_j$, then we write $X_i \leq X_j$. 
	Similarly, if $X_i \leq X_j$ and $X_i \neq X_j$, then we write $X_i < X_j$. 
	We define the dimension of a decomposed space $X$ to be 
	$$
	\dim X \coloneqq \sup_{i\in I} \dim X_i.
	$$
	
	\begin{definition}
		For a decomposed space $(X,I)$, the \textit{depth} of a piece is 
		$$
		\text{depth}_XX_i := \sup\{n \in \Z_{\geq 0}\ | \  \exists n\ \text{pieces } X_{i,1},\dots, X_{i,n} \subset X \  \text{satisfying } X_i < X_{i,1} < \dots < X_{i,n} \}.
		$$
		
		The \textit{depth} of $X$ is defined as
		$$
		\text{depth }X \coloneqq \sup_{i\in I} \text{depth}_X X_i.
		$$
	\end{definition}
	\begin{remark}
		One has $\text{depth }X \leq \dim X$ for a decomposed space $X$.
	\end{remark}
	Let $X$ be a decomposed space. 
	Write $\text{cone}(X)$ for the cone over $X$. 
	The decomposed space structure on $X$ induces such a structure on $\text{cone}(X)$, and 
	$$
	\text{depth }\text{cone}(X) = \text{depth }X + 1.
	$$
	Using this fact, we can refine the idea of a decomposition further to impose a dimensionality structure called a stratification. 
	The definition of a stratification is a recursion on the depth of the space. 
	
	\begin{definition}
		A decomposed space $X$ is called a \textit{stratified space} if the following recursive condition is satisfied:
		for each $X_i\subset X$ and $x \in X_i$, there exist a neighborhood $V \subset X$ of $x$, a subset $U \subset X_i$ containing $x$ that is open in $X_i$, a stratified space $L$, called the \textit{link} of $x$, and a decomposition-preserving homeomorphism
		$$
		\varphi: U \times \text{cone}(L) \longrightarrow V \subset X
		$$
		that restricts to diffeomorphisms between pieces.
		The pieces of $X$ are then called \textit{strata}.
	\end{definition}
	
	\subsection{Orbit-type strata}\label{sec:orbit-type-strata-section}
	Consider a smooth $G$-manifold $M$ for a compact connected Lie group $G$.
	Recall Definition \ref{points_of_orbit_type} from Section \ref{group_actions_on_manifold}. 
	Let $\{M_\mathcal{C}^i\}_{I \in \mathcal{C}_i}$ denote the set of connected components of $M_\mathcal{C}$ for an orbit type $\mathcal{C}$ of $M$.
	Note that $M$ is the disjoint union
	\begin{equation}\label{orbit-type-decomp}
		M = \bigcup_{\mathcal{C},i} M_{\mathcal{C}}^i.
	\end{equation}
	Each component $M_{\mathcal{C}}^i$ is invariant under the $G$-action.
	This fact gives rise to the disjoint union
	\begin{equation}\label{quot-orbit-type-decomp}
		M/G = \bigcup_{\mathcal{C},i} M_{\mathcal{C}}^i\big/G.
	\end{equation}
	Equations \ref{orbit-type-decomp} and \ref{quot-orbit-type-decomp} give $M$ and $M\slash G$ the structures of stratified spaces called the \textit{orbit-type stratifications} of $M$ and $M/G$ \cite{bredon1972introduction}.
	
	Assume that $M$ is non-empty and connected.
	Among the orbit types of $M$, there is a unique class $\mathcal{C}_\mathrm{prin}$ with the following property:
	for all orbit types $\mathcal{C}$ of $M$ and elements $H_\mathrm{prin} \in \mathcal{C}_\mathrm{prin}$, there exists $H \in \mathcal{C}$ such that $H_\mathrm{prin} \subset H$. 
	The corresponding orbit-type stratum $M_\text{prin}\coloneqq M_{\mathcal{C}_\mathrm{prin}}$ is open and dense in $M$, and its quotient $M_{\mathcal{C}_\mathrm{prin}}\big/G \subset M \big/G$ is open and dense.
	The stratum $M_\text{prin} \subset M$ is called the \textit{principal orbit-type stratum} of $M$.
	The stratum is the unique one of depth 0 in the stratification. 
	
	\subsection{The case of the coadjoint action}\label{Subsection: The case}
	Let $G$ be a compact connected Lie group $G$ with Lie algebra $\g$. 
	Consider the coadjoint action of $G$ on $\g^*$.
	In this context, the orbit-type strata are especially well understood. 
	With the choice of a fundamental Weyl chamber $\t_+^* \subset \g^*$, the open faces of the $\t^*_+$ correspond to the orbit-type strata as follows:
	\begin{align*}
		\{\text{Orbit-type strata of } \g^*\} &\longleftrightarrow \{\text{Open faces of }\t_+^*\}\\
		G \cdot \tau &\longmapsfrom \tau \\
		\Sigma &\longmapsto \Sigma \cap \t_+^*.
	\end{align*}
	The principal orbit-type stratum is the one corresponding to the interior of the Weyl chamber, i.e.
	$$
	\g_\text{prin}^* = G \cdot (\t^*_+)^\circ.
	$$
	Note that this is the regular locus of $\g$, as defined in Section \ref{sec:lie-theory}.
	
	\begin{lemma}\label{strata_cotantent_space}
		Consider an orbit type stratum $\Sigma \subset \g^*$.
		For all $\xi \in \Sigma$, we have
		$$
		T_\xi\Sigma = \mathrm{ann}_{\g^*}([\g_\xi,\g_\xi]).
		$$
	\end{lemma}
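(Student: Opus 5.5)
Using the $G$-module isomorphism $(-)^\vee$ from \eqref{innerprod_iso}, it suffices to argue in $\g$: for an orbit-type stratum $\Sigma\subset\g$ and $x\in\Sigma$, show $T_x\Sigma=[\g_x,\g_x]^{\perp}$, where $\perp$ is taken with respect to $\langle-,-\rangle$. Indeed, under $(-)^\vee$ the orthogonal complement of a subspace $V\subset\g$ is sent to $\mathrm{ann}_{\g^*}(V)$, and $\g_{\xi}=\g_x$ when $\xi=x^\vee$. So fix $x\in\Sigma$ and conjugate so that $x$ lies in the open face $\tau$ of $\t_+$ corresponding to $\Sigma$. The stratum $\Sigma$ is the $G$-saturation $G\cdot\tau$ of this face.

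\textbf{Step 1: $T_x\Sigma\subset[\g_x,\g_x]^\perp$.} Write $\Sigma$ locally as the image of the smooth map $G\times\tau\to\g$, $(g,y)\mapsto \Ad_g(y)$. Then
$$T_x\Sigma=[\g,x]+\mathrm{span}(\tau).$$
Next we identify $\mathrm{span}(\tau)$. All $y\in\tau$ have the same centralizer, namely $\g_x$, since each point of the open face has exactly the same set of vanishing positive roots. Hence $\tau$ is an open subset of the center $\z(\g_x)$, and
$$T_x\Sigma=[\g,x]+\z(\g_x).$$

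\textbf{Step 2: the complementary inclusion and dimension count.} By invariance, $\langle[y,x],z\rangle=-\langle y,[z,x]\rangle$, which vanishes for all $y$ exactly when $z\in\g_x$. Thus $[\g,x]^\perp=\g_x$, so $\g=[\g,x]\oplus\g_x$ orthogonally. Since $\g_x$ is the Lie algebra of a compact group, it is reductive, giving an orthogonal decomposition
$$\g_x=\z(\g_x)\oplus[\g_x,\g_x].$$
Here orthogonality holds because $\langle z,[a,b]\rangle=\langle[z,a],b\rangle=0$ for $z$ central. Combining the two decompositions,
$$\g=[\g,x]\oplus\z(\g_x)\oplus[\g_x,\g_x]$$
is an orthogonal direct sum. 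Therefore
$$[\g_x,\g_x]^\perp=[\g,x]\oplus\z(\g_x)=T_x\Sigma,$$
where the last equality is Step 1.

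\textbf{Main obstacle.} The only delicate point is justifying that $\tau$ is open in $\z(\g_x)$ and that $\Sigma$ is locally the image of a submersion from $G\times\tau$, so that the tangent-space formula in Step 1 is valid.

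For the first claim, $\tau$ is the set of points of $\t_+$ on which a fixed subset $S$ of the positive roots vanishes and all other positive roots are nonzero. Its span is therefore $\bigcap_{\alpha\in S}\ker\alpha$, which equals $\z(\g_x)$ because $\g_x=\t\oplus\bigoplus_{\pm\alpha\in S}\g_\alpha$ after complexification.

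For the second claim, $\Sigma\cong G\times_{G_x}\tau$ is a manifold, and the action map $G\times\tau\to\Sigma$ is a surjective submersion. Alternatively, one can compute the dimension directly: $\dim\Sigma=\dim G/G_x+\dim\z(\g_x)$, which matches the dimension of $[\g,x]\oplus\z(\g_x)$. This gives equality once the inclusion $[\g,x]+\z(\g_x)\subset T_x\Sigma$ is known, and that inclusion follows from differentiating the curves $t\mapsto\Ad_{\exp(ty)}x$ and $t\mapsto x+tz$ for $z\in\z(\g_x)$ small, both of which stay in $\Sigma$.
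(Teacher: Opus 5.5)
Your proof is correct and follows essentially the same route as the paper: transfer to $\g$ via $(-)^\vee$, use the orthogonal decomposition $\g=\z(\g_x)\oplus[\g_x,\g_x]\oplus[\g,x]$, and identify $T_x\Sigma_\g=[\g,x]\oplus\z(\g_x)$. You also justify two points the paper leaves implicit: the reduction to $x$ lying in the open face, and the identification of the tangent space of that face with $\z(\g_x)$.
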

	\begin{proof}
		
		Consider the $G$-module isomorphism $(-)^\vee:\g\longrightarrow \g^*$ from Equation \ref{innerprod_iso}.
		Let $x \in \g$ be the element such that $(-)^\vee(x) = \xi$.
		Recall that we have the orthogonal decompositions
		\begin{equation}\label{lie_alg_decomp}
			\g = \g_x \oplus [\g, x] = \mathfrak{z}(\g_x)  \oplus [\g_x, \g_x] \oplus [\g, x].
		\end{equation}
		Let $\Sigma_\g \subset \g$ be the orbit-type stratum of the adjoint representation containing $x$, and write $\sigma_{\g}\subset\mathfrak{t}_{+}$ for the open face to which it corresponds.
		We have
		\begin{equation}\label{lie_alg_stratum_tangent_decomp}
			T_x \Sigma_\g = T_x(G\cdot x) + T_x \sigma_\g = [\g,x] \oplus \mathfrak{z}(\g_x).
		\end{equation}
		It follows that $T_x \Sigma_\g = [\g_x, \g_x]^\perp$.
		We have $(-)^\vee(\Sigma_\g) = \Sigma$, and therefore $T_\xi\Sigma = \text{ann}_{\g^*}([\g_\xi, \g_\xi])$.
	\end{proof}
	
	\subsection{Orbit-type strata and symplectic reduction}\label{sec:orbit-type-and-reduction-section}
	The orbit-type stratification plays an important role in the Sjamaar--Lerman approach to symplectic reduction. 
	Let $G$ be a compact connected Lie group with Lie algebra $\g$, and suppose that $(M,\omega,\mu)$ is a Hamiltonian $G$-space.
	Recall from the introduction to Section \ref{sec:stratifications} the precise senses in which $\mu^{-1}(\xi)/ G_\xi$ may fail to be a manifold, where $\xi \in \g^*$. 
	On the other hand, $\mu^{-1}(\xi) / G_\xi$ is always a \textit{stratified symplectic space} \cite{sjamaar1991Stratified}. 
	We now review some of the pertinent ideas.
	
	In what follows, $C^0(X)$ denotes the algebra of continuous functions from a topological space $X$ to $\R$.
	We also write $C^\infty(M)$ for the algebra of smooth functions $M \longrightarrow \R$, where $M$ is a smooth manifold.
	\begin{definition}
		Let $X$ be a stratified space with strata $\{X_i\}_{i \in I}$.
		A \textit{smooth structure} on $X$ is a subalgebra $C^\infty(X) \subset C^0(X)$ with the following property:
		for all $f \in C^\infty(X)$ and $i \in I$ , we have $f\big\vert_{X_i} \in C^\infty(X_i)$.
	\end{definition}
	
	Using the notion of a smooth structure, one defines a stratified symplectic space. 
	\begin{definition}
		A \textit{stratified symplectic space} is the data of a stratified space $X$ with strata $\{X_i\}_{i \in I}$, a smooth structure $C^\infty(X)\subset C^0(X)$ with a Poisson bracket, and a symplectic structure on $X_i$ for all $i \in I$, such that the following property is satisfied:
		for all $i \in I$, the map
		\begin{align*}
			C^\infty(X) &\longrightarrow C^\infty(X_i) \\
			f &\longmapsto f\big\vert_{X_i}
		\end{align*}
		is a Poisson algebra morphism.
	\end{definition}
	
	Let $G$ be a compact connected Lie group with Lie algebra $\g$. 
	Suppose that $M$ is a Hamiltonian $G$-space with moment map $\mu$.
	For $\xi \in \g^*$, we aim to realize $\mu^{-1}(\xi) / G_\xi$ as a stratified symplectic space. A first step is to examine the orbit type stratifications arrising from the action of $G_\xi$ on $\mu^{-1}(\xi)$.
	These stratifications are 
	$$
	\mu^{-1}(\xi) = \bigcup_{\mathcal{C},i}\mu^{-1}(\xi)_{\mathcal{C}}^i \quad \text{and} \quad 
	\mu^{-1}(\xi)/G_\xi = \bigcup_{\mathcal{C},i}\big(\mu^{-1}(\xi)_{\mathcal{C}}^i /G_\xi\big)
	$$
	where $\mathcal{C}$ ranges over orbit types of $\mu^{-1}(\xi)$; see Equations \ref{orbit-type-decomp} and \ref{quot-orbit-type-decomp}. While $\mu^{-1}(\xi)$ may fail to be a manifold, the definitions of $\mu^{-1}(\xi)_{\mathcal{C}}$ and $\mu^{-1}(\xi)_{\mathcal{C}}^i$ still make sense.
	
	\begin{theorem}[Sjamaar-Lerman]\label{thrm:Sjamaar-Lerman}
		Suppose that $\xi \in \g^*$, and let $\mathcal{C}$ be an orbit type of $\mu^{-1}(\xi)$.
		Let $\{\mu^{-1}(\xi)_\mathcal{C}^i\}_{i \in I}$ be the connected components of $\mu^{-1}(\xi)_\mathcal{C}$.
		\begin{enumerate}[(i)]
			\item The subset $\mu^{-1}(\xi)_{\mathcal{C}}^i$ is a submanifold of $M$ for all $i \in I$.
			\item For all $i \in I$, there exists a unique smooth manifold structure on $\mu^{-1}(\xi)_\mathcal{C}^i/G_\xi$ for which the quotient map $\pi:\mu^{-1}(\xi)_\mathcal{C}^i\longrightarrow \mu^{-1}(\xi)_\mathcal{C}^i/G_\xi$ is a submersion.
			\item There exists a unique symplectic form $(\omega_\xi)_{\mathcal{C}}^i$ on $\mu^{-1}(\xi)_{\mathcal{C}}^i/G_\xi$ satisfying 
			$$
			\pi^*(\omega_\xi)_{\mathcal{C}}^i = j^*\omega
			$$
			for all $i \in I$, where $j: \mu^{-1}(\xi)_\mathcal{C}^i \longrightarrow M$ is the inclusion. 
		\end{enumerate}
	\end{theorem}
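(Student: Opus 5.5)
The plan is the standard local-normal-form argument of Sjamaar--Lerman. First reduce to level $0$ by shifting: replacing $M$ with $M\times \mathcal{O}_{-\xi}$, the product with the coadjoint orbit through $-\xi$ and the diagonal $G$-action, identifies $\mu^{-1}(\xi)/G_\xi$ with the zero level quotient $\Phi^{-1}(0)/G$. Here $\Phi(m,\eta)=\mu(m)+\eta$. This identification respects orbit types, since the $G$-stabilizer of $(m,-\xi)$ is $(G_\xi)_m$. It therefore suffices to treat $\xi=0$ with $G_\xi=G$; I will argue in that setting. All statements are local near a point $p\in\mu^{-1}(0)_{(H)}$, with $H=G_p$.

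\emph{Step 1 (part (i)).} By the Marle--Guillemin--Sternberg local normal form, a $G$-invariant neighbourhood of $G\cdot p$ is equivariantly symplectomorphic to a neighbourhood of the zero section in $Y=G\times_H(\mathfrak{m}^*\times V)$. Here $\mathfrak{m}=\mathfrak{h}^\perp\subset\g$, $V$ is the symplectic slice (a unitary $H$-representation), and the moment map is $[g,\eta,v]\mapsto \Ad^*_g(\eta+\Phi_V(v))$, with $\Phi_V:V\to\mathfrak h^*$ the quadratic moment map. In this model the points with stabilizer conjugate to $H$ and moment value $0$ are exactly $G\times_H(\{0\}\times(V^H\cap\Phi_V^{-1}(0)))$. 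Next I observe that $\Phi_V$ vanishes on $V^H$. This holds because $\langle\Phi_V(v),x\rangle=\tfrac12\omega_V(x\cdot v,v)$ and $x\cdot v=0$ for $x\in\mathfrak h$ and $v\in V^H$. Hence $\mu^{-1}(0)_{(H)}$ is locally $G\times_H V^H\cong G/H\times V^H$. This is a submanifold, which gives (i) on each connected component.

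\emph{Step 2 (part (ii)).} In the model, $\mu^{-1}(0)_{(H)}/G\cong V^H$ locally. More globally, $G$ acts on the component with all stabilizers conjugate to $H$. Then $\mu^{-1}(0)_H$ carries a free proper action of $N_G(H)/H$, and $\mu^{-1}(0)_{(H)}/G=\mu^{-1}(0)_H/(N_G(H)/H)$. The slice theorem for free proper actions gives the unique manifold structure for which $\pi$ is a submersion. Uniqueness is automatic, since a topological quotient admits at most one smooth structure making the quotient map a submersion.

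\emph{Step 3 (part (iii)).} Uniqueness again follows because $\pi^*$ is injective on forms when $\pi$ is a surjective submersion. For existence, I show that $j^*\omega$ is $G$-invariant and basic. Invariance is clear. For $x\in\g$ and a tangent vector $w$ to $\mu^{-1}(0)_{(H)}$, we have $\omega(x_M,w)=\pm d\mu^x(w)=0$ since $\mu$ is constant there, so $j^*\omega$ descends to a closed $2$-form. For nondegeneracy I use the model: $j^*\omega$ at $[e,0,v]$ restricted to $V^H$ is $\omega_V|_{V^H}$. This form is nondegenerate because $V^H$ is a symplectic subspace, being the fixed space of a compact group acting unitarily. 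Its kernel on $T(G/H)\oplus V^H$ is exactly the orbit directions.

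The main obstacle is Step 1: correctly identifying the orbit-type and zero-level loci inside the normal form. In particular, one must verify that isotropy exactly $H$ forces $\eta=0$ on the zero level and pins $v$ to $V^H$. The remaining steps are formal once this local picture is in place.
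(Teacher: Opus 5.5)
The paper gives no proof of this statement; it is quoted from Sjamaar--Lerman. Your proposal therefore has to be judged on its own, and it holds up. The normal-form computations are right:
\begin{itemize}
\item the splitting $\g^*=\mathfrak{h}^*\oplus\mathfrak{m}^*$ forces $\eta=0$ on the zero level;
\item isotropy conjugate to $H$ forces $H_v=H$, i.e.\ $v\in V^H$;
\item $\Phi_V$ vanishes on $V^H$;
\item at $[e,0,v]$ the pullback of $\omega$ to $\mathfrak{m}\oplus V^H$ is $0\oplus\omega_V|_{V^H}$, with $V^H$ a symplectic subspace, so its kernel is exactly the orbit directions.
\end{itemize}

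Your route differs from the one in Sjamaar--Lerman's paper, which avoids the Marle--Guillemin--Sternberg model. There one observes that $M_H$ is open in the fixed-point set $M^H$, hence a symplectic submanifold. On it $L=N_G(H)/H$ acts freely and in a Hamiltonian fashion, with $\mu^{-1}(0)\cap M_H$ as the zero level of the $L$-moment map. Regular Marsden--Weinstein reduction then makes $\mu^{-1}(0)_H$ a manifold and $\mu^{-1}(0)_H/L$ symplectic. Finally, $\mu^{-1}(0)_{(H)}\cong G\times_{N_G(H)}\mu^{-1}(0)_H$ yields (i)--(iii).

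That argument gets smoothness and nondegeneracy for free from the regular theorem. Yours requires computing $\omega$ in the model, but in exchange it gives the explicit local picture $G\times_H V^H\to V^H$. That picture is what one needs anyway for the conical local structure of the strata, and your Step 2 already borrows the $N_G(H)/H$ viewpoint only for the quotient's smooth structure.

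Two details in your reduction to $\xi=0$ deserve a line each.
\begin{itemize}
\item After shifting, stabilizers $(G_\xi)_m$ are compared up to $G$-conjugacy rather than $G_\xi$-conjugacy. So a piece $\Phi^{-1}(0)_{(K)}\cap(M\times\{-\xi\})$ may contain several of the pieces $\mu^{-1}(\xi)_{\mathcal{C}}$ in the statement. This is harmless: $G_\xi$-orbit type is locally constant on such a piece, by the slice theorem and the fact that a closed subgroup of $(G_\xi)_m$ that is conjugate to it must equal it. Hence the connected components agree.
\item To obtain (i) inside $M$ rather than inside $M\times\mathcal{O}_{-\xi}$, take the fibre over $-\xi$ of the projection $\Phi^{-1}(0)_{(K)}\to\mathcal{O}_{-\xi}$. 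This map is $G$-equivariant onto a homogeneous space, hence a submersion. On that fibre the pullback of $\omega\oplus\omega_{\mathcal{O}}$ is just the pullback of $\omega$, and the quotient map from the fibre to $\Phi^{-1}(0)_{(K)}/G$ is still a submersion, which transfers (ii) and (iii).
\end{itemize}
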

	
	Sjamaar--Lerman also prove that $\mu^{-1}(\xi)/G_\xi$ is a stratified symplectic space with strata $\{\mu^{-1}(\xi)_\mathcal{C}^i/G_\xi\}_{\mathcal{C},i}$.
	We denote this stratified symplectic space as
	$M /\!\!/_\xi G$.
	
	We now recall the notion of an isomorphism between stratified symplectic spaces, as developed in \cite{crooks2024Gelfand}. Given $\xi\in\g^*$, consider the quotient 
	$$\text{Stab}(G,\xi)\coloneqq \{(G_\xi)_p \ | \ p \in \mu^{-1}(\xi)\} \ /\, G_\xi$$ by the conjugation action of $G_\xi$. 
	This is the set of $G_\xi$-conjugacy classes of stabilizers of the $G_\xi$-action on $\mu^{-1}(\xi)$.
	Despite our notation for $\text{Stab}(G,\xi)$, this set obviously depends on the ambient Hamiltonian $G$-space $M$.
	
	\begin{definition}\label{sss_isomorph}
		Let $G$ and $K$ be compact connected Lie groups with respective Lie algebras $\g$ and $\mathfrak k$. 
		Suppose that $M$ is a Hamiltonian $G$-space with moment map $\mu:M \longrightarrow \g^*$, and that $N$ is a Hamiltonian $K$-space with moment map $\nu:N \longrightarrow \mathfrak k^*$.
		Take $\xi \in \g^*$ and $\eta \in \mathfrak k^*$.
		A pair of maps 
		\begin{align*}
			\varphi:M \sll{\xi} G \longrightarrow N \sll{\eta} K \quad \text{and} \quad 
			\phi: \text{Stab}(G, \xi) \longrightarrow \text{Stab}(K, \eta)
		\end{align*}
		is called an \textit{isomorphism of stratified symplectic spaces} if the following conditions are satisfied:
		\begin{enumerate}[(i)]
			\item $\varphi$ is a homeomorphism;
			\item $\phi$ is a bijection;
			\item $\varphi$ restricts to a symplectomorphism 
			\[
			(\mu^{-1}(\xi)_{\mathcal{C}})\big/ G_\xi \longrightarrow (\nu^{-1}(\eta)_{\phi(\mathcal{C})})\big/ K_\eta 
			\]
			for all $\mathcal{C}\in \text{Stab}(G, \xi)$.
		\end{enumerate}
	\end{definition}
	
	\subsection{Symplectic cross-sections and the \texorpdfstring{$M$}{M}-principal stratum}\label{sec:principal-stratum-section}
	
	Let us continue with the conventions and notation from the previous section.
	As the moment map $\mu:M \longrightarrow \g^*$ is equivariant, it is natural to ask about the relationship between orbit-type strata in $M$ and $\g^*$. 
	The symplectic cross-section theorem \cite{lerman1998Nonabelian} implies the following result of this nature. 
	
	\begin{theorem}\label{sympl_cross_section}
		Let $G$ be a compact connected Lie group and $M$ a non-empty, connected Hamiltonian $G$-space with moment map $\mu:M \longrightarrow \g^*$.  
		There exists a unique orbit-type stratum $\Sigma \subset \g^*$ with the property that $\mu(M) \cap \Sigma$ is non-empty and dense in $\mu(M)$.      
		Furthermore, $\mu^{-1}(\Sigma) \subseteq M$ is open and dense. 
	\end{theorem}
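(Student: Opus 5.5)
The plan is to pass through the Weyl chamber and combine the symplectic cross-section theorem \cite{lerman1998Nonabelian} with the face structure of $\t^*_+$ from Section \ref{Subsection: The case}.

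\textbf{Step 1: reduce to the chamber.} Let $\Delta := \mu(M)\cap\t^*_+$. Since $\t^*_+$ is a fundamental domain and $\mu$ is equivariant, $\mu(M) = G\cdot\Delta$. Each orbit-type stratum of $\g^*$ is $G\cdot\tau$ for an open face $\tau$ of $\t^*_+$. So it suffices to produce a unique open face $\tau$ such that $\Delta\cap\tau$ is non-empty and dense in $\Delta$. Density then transfers: the map $G\times\t^*_+\to\g^*$ is continuous and surjective onto $\mu(M)$ from $G\times\Delta$, so $G\cdot(\Delta\cap\tau)$ is dense in $G\cdot\Delta$.

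\textbf{Step 2: the principal face via cross-sections.} Call $\tau$ the \emph{principal face} for $M$: among open faces meeting $\Delta$, it is the minimal face $\tau$ of $\t^*_+$ (with respect to closure order) such that $\Delta\subset\overline{\tau}$. Equivalently, $\tau$ is the relative interior of the smallest closed face containing $\Delta$. The symplectic cross-section theorem says that $Y := \mu^{-1}(\tau)$ is a connected symplectic $G_\tau$-submanifold, and that $G\times_{G_\tau} Y \to M$ is an open embedding onto $\mu^{-1}(G\cdot\tau)$. Here connectedness uses that $M$ is connected, via the Lerman--Meinrenken--Tolman--Woodward refinement.

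To see that $G\cdot\tau$ meets $\mu(M)$ in a non-empty open dense set, proceed as follows.
\begin{enumerate}[(a)]
	\item By the LMTW convexity/principal-face theorem, $\mu(M)\cap\tau \ne\emptyset$, and the subset $\mu^{-1}(G\cdot\tau)$ is open and dense in $M$. The key input is that $\mu^{-1}(G\cdot\tau)$ is open (by the cross-section theorem it is $G\times_{G_\tau}Y$). Its complement lies in $\mu^{-1}$ of the closed set $G\cdot(\overline{\tau}\setminus\tau)$. On the connected manifold $M$, this preimage is the preimage of a union of proper faces, and its interior must be empty.
	\item To argue (b), suppose an open set $U\subset M$ maps into $G\cdot\sigma$ for a face $\sigma<\tau$. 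Then, via the cross-section at $\sigma$ and a real-analyticity/local-normal-form argument, $[\g_\sigma,\g_\sigma]$-components of $\mu$ are locally constant. This forces $\Delta\subset\overline{\sigma}$ by connectedness, contradicting minimality of $\tau$.
\end{enumerate}

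\textbf{Step 3: conclude.} Density of $\mu^{-1}(\Sigma)$ with $\Sigma=G\cdot\tau$ gives density of $\mu(M)\cap\Sigma$ in $\mu(M)$, since $\mu$ is continuous and surjective onto its image. Openness follows from Step 2.

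For uniqueness, suppose $\Sigma'\neq\Sigma$ is another stratum with $\mu(M)\cap\Sigma'$ dense in $\mu(M)$. Then $\mu(M)\subset\overline{\Sigma'}$, so $\Delta\subset\overline{\tau'}$, and hence $\tau\le\tau'$ by minimality. Strict inequality is impossible, because $\mu(M)\cap\Sigma$ is a non-empty open subset of $\mu(M)$ disjoint from $\Sigma'$, which contradicts the density of $\Sigma'$.

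\textbf{Main obstacle.} The hard part is Step 2(b): showing that the preimage of the lower-dimensional faces has empty interior in $M$. I would handle this through the local normal form given by the cross-section. Near a point over $\sigma$, $\mu$ composed with projection to $\g_\sigma^*$ is a moment map for $G_\sigma$ on the slice. If an open set mapped into $\overline{\sigma}$-directions, the $[G_\sigma,G_\sigma]$-part of the moment map would be constant there, so $[G_\sigma,G_\sigma]$ would act trivially on that open set. Since a compact group acting trivially on an open subset of a connected symplectic (hence analytic-by-slice) $G$-manifold acts trivially on all of $M$, we would get $\mu(M)\subset\operatorname{ann}([\g_\sigma,\g_\sigma])$ after conjugation, and hence $\Delta\subset\overline{\sigma}$. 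This uses Lemma \ref{strata_cotantent_space} to identify $T\Sigma$ with that annihilator.
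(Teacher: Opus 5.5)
Steps 1 and 3 are fine. Insofar as Step 2(a) simply invokes the principal cross-section theorem of Lerman--Meinrenken--Tolman--Woodward for the density of $\mu^{-1}(G\cdot\tau)$, you are doing what the paper does: it gives no argument beyond citing that theorem. Do not get $\mu(M)\cap\tau\neq\emptyset$ from convexity, since $\mu$ is not assumed proper. Nonemptiness follows from density instead: a face $\tau'$ with $\mu^{-1}(G\cdot\tau')$ dense satisfies $\Delta\subset\overline{\tau'}$ and $\tau'\subset\overline{\tau}$, which forces $\tau'=\tau$.

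The genuine gap is in your own argument for Step 2(b). If $U\subset M$ is open with $\mu(U)\subset G\cdot\sigma$, the cross-section only shows that the $[\g_\sigma,\g_\sigma]$-component of $\mu|_{Y_\sigma}$ vanishes on $U\cap Y_\sigma$. Hence $[G_\sigma,G_\sigma]$ fixes $U\cap Y_\sigma$ pointwise, but this set is open in $Y_\sigma$, not in $M$. A point $m\in U$ is fixed only by $[G_{\mu(m)},G_{\mu(m)}]$, which varies with $m$. So the fixed-point principle for compact groups on connected manifolds says nothing about $M$. Your conclusion that $[G_\sigma,G_\sigma]$ acts trivially on all of $M$ is false in general. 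For $\mathbb{CP}^2\cong\mathrm{U}(3)\cdot\mathrm{diag}(1,0,0)$, the cross-section is a single point fixed by $[G_\sigma,G_\sigma]\cong\mathrm{SU}(2)$, yet $\mathrm{SU}(2)$ does not act trivially on $\mathbb{CP}^2$. Likewise, ``$\mu(M)\subset\mathrm{ann}_{\g^*}([\g_\sigma,\g_\sigma])$ after conjugation'' has no content, because no single conjugation works on all of $M$. Real-analyticity is not available either, since $M$ and $\mu$ are only smooth.

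What your argument legitimately yields is this: $[G_\sigma,G_\sigma]$ fixes a connected component $Y'$ of $Y_\sigma$ meeting $U$. Hence $\mu(Y')\subset\sigma$, and $G\cdot Y'$ is a non-empty open subset of $M$ mapped into $G\cdot\sigma$. Nothing in your proposal prevents $M$ from containing another non-empty open set mapped into a different stratum. Showing that $G\cdot Y'$ is dense is precisely the global content of the LMTW theorem, and it needs a global input you have not supplied.

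One way to supply it uses the principal orbit-type stratum $M_{\mathrm{prin}}$, which is connected, open and dense.
\begin{itemize}
\item At $m\in M_{\mathrm{prin}}$ with $\xi=\mu(m)$, the slice representation of $G_m$ is trivial.
\item So in the Marle--Guillemin--Sternberg normal form, $\g_m$ is an ideal of $\g_\xi$ and the symplectic slice contributes nothing to the moment map.
\item Hence $\mu$ is locally $[g,a,v]\mapsto\Ad^*_g(\xi+a)$, with $a$ near $0$ in $\mathrm{ann}_{\g_\xi^*}(\g_m)$, and exactly one stratum $\Sigma(m)$ has open dense preimage near $m$.
\item The assignment $m\mapsto\Sigma(m)$ is locally constant, hence constant on $M_{\mathrm{prin}}$.
\item The preimage of this stratum $\Sigma$ is therefore dense in $M$. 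It is open because $\mu(M)\subset\overline{\Sigma}$ and $\Sigma$ is open in $\overline{\Sigma}$.
\end{itemize}
Your Step 3 then finishes the proof.
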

	
	\begin{definition}\label{def:M-principal-stratum}
		We call the stratum $\Sigma$ in Theorem \ref{sympl_cross_section} the \textit{$M$-principal stratum} of $\g^*$.
	\end{definition}
	
	\begin{proposition} \label{trivial_action}
		Let $M$ be a non-empty, connected Hamiltonian $G$-space with moment map $\mu:M \longrightarrow \g^*$ and $M$-principal stratum $\Sigma \subset \g^*$.
		For $\xi \in \Sigma$, the action of $[G_\xi, G_\xi] \subset G_\xi$ on $\mu^{-1}(\xi)$ is trivial.
	\end{proposition}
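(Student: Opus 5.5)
The plan is to show that the infinitesimal action of $[\g_\xi,\g_\xi]$ vanishes on $\mu^{-1}(\xi)$. Since $[G_\xi,G_\xi]$ is connected (it is the commutator subgroup of the connected compact group $G_\xi$) and has Lie algebra $[\g_\xi,\g_\xi]$, triviality of the group action will follow once every $x\in[\g_\xi,\g_\xi]$ satisfies $(x_M)_p=0$ for all $p\in\mu^{-1}(\xi)$.

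Fix $p\in\mu^{-1}(\xi)$ and $x\in[\g_\xi,\g_\xi]$. The key identity is the standard one relating the image of $d\mu$ to stabilizers:
$$\mathrm{im}(d\mu_p)=\mathrm{ann}_{\g^*}(\g_p).$$
It comes from $\omega_p(x_M,v)=\langle d\mu_p(v),x\rangle$ up to sign, together with nondegeneracy of $\omega$. We therefore want $\mathrm{im}(d\mu_p)\subseteq \mathrm{ann}_{\g^*}([\g_\xi,\g_\xi])=T_\xi\Sigma$, using Lemma \ref{strata_cotantent_space}. Given this inclusion, every $x\in[\g_\xi,\g_\xi]$ satisfies $\langle d\mu_p(v),x\rangle=0$ for all $v$, hence $\omega_p(x_M,v)=0$ for all $v$, hence $(x_M)_p=0$.

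To get the inclusion, use Theorem \ref{sympl_cross_section}. The set $\mu^{-1}(\Sigma)$ is open and dense in $M$. Since $\Sigma$ is a locally closed submanifold and $p\in\mu^{-1}(\Sigma)$, there is an open neighbourhood $U$ of $p$ with $\mu(U)\subseteq\Sigma$. Then $\mu|_U:U\to\g^*$ is a smooth map landing in the embedded submanifold $\Sigma$, so it is smooth as a map into $\Sigma$. Consequently $d\mu_p(T_pM)\subseteq T_\xi\Sigma$, which is exactly what the previous paragraph needs.

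I expect the main obstacle to be justifying that $\mu^{-1}(\Sigma)$ is a neighbourhood of each point of $\mu^{-1}(\xi)$. Openness of $\mu^{-1}(\Sigma)$ is part of Theorem \ref{sympl_cross_section}, so this is immediate once that theorem is assumed. If one did not want to rely on openness, the fallback would be a density argument: on the dense set $\mu^{-1}(\Sigma)$, the stabilizer algebra $\g_q$ contains $[\g_{\mu(q)},\g_{\mu(q)}]$, and one would pass to the limit using upper semicontinuity of stabilizers. That route is messier, and the openness statement makes it unnecessary.
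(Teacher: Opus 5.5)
Your argument is correct, but it follows a different route from the paper. The paper's proof is a citation plus a transfer step. Lerman--Meinrenken--Tolman--Woodward show that $[G_\sigma,G_\sigma]$ acts trivially on the principal cross-section $\mu^{-1}(\sigma)$. The paper then moves this to an arbitrary $\xi\in\Sigma=G\cdot\sigma$ by $G$-equivariance, using $\mu^{-1}(g\cdot\xi)=g\cdot\mu^{-1}(\xi)$ and $[G_{g\cdot\xi},G_{g\cdot\xi}]=g[G_\xi,G_\xi]g^{-1}$.

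You instead prove the statement directly at every point of $\mu^{-1}(\Sigma)$:
\begin{enumerate}[(i)]
\item Openness of $\mu^{-1}(\Sigma)$ from Theorem \ref{sympl_cross_section} forces $\mathrm{d}\mu_p(T_pM)\subseteq T_\xi\Sigma$.
\item Lemma \ref{strata_cotantent_space} identifies $T_\xi\Sigma=\mathrm{ann}_{\g^*}([\g_\xi,\g_\xi])$, so $\mathrm{d}\mu^x_p=0$ and hence $(x_M)_p=0$ for every $x\in[\g_\xi,\g_\xi]$.
\item Connectedness of $[G_\xi,G_\xi]$ finishes the argument; this holds because $G_\xi$ is connected when $G$ is compact and connected.
\end{enumerate}

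This is essentially the mechanism behind the LMTW proof itself: there, the $[G_\sigma,G_\sigma]$-part of the moment map is constant on the cross-section. You run it on $M$ rather than on $\mu^{-1}(\sigma)$. As a result, no equivariance transfer is needed, and the argument is self-contained given the paper's earlier results. It also gives the pointwise infinitesimal statement $[\g_{\mu(p)},\g_{\mu(p)}]\subseteq\mathrm{Lie}(G_p)$ for all $p\in\mu^{-1}(\Sigma)$, which makes transparent why the result holds.

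Be aware that the nontrivial input has been relocated rather than removed. The openness of $\mu^{-1}(\Sigma)$ in Theorem \ref{sympl_cross_section} is itself a consequence of the LMTW principal cross-section theorem. It follows because $\mu(M)\cap\t^*_+\subseteq\overline{\sigma}$, so $\mu^{-1}(G\cdot\sigma)$ coincides with the preimage of an open slice neighbourhood of $G\cdot\sigma$. Your proof therefore still rests on that work, just on a different part of it. The paper's version is shorter; yours explains the mechanism.
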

	\begin{proof}
		This is proven in \cite{lerman1998Nonabelian} for $\xi$ in the principal stratum $\sigma\subset\mathfrak{t}_{+}^*$ of $M$. Since $\Sigma=G\cdot\sigma$ and $\mu$ is $G$-equivariant, the desired result follows.
	\end{proof}

	%%%%%%%%%%%%%%%%%%%%%%%%%%%%%%%%%%%%%%%%%%%%%%%%%%%%%%%%%%%%%%%%%%%
	%%%%%%%%%%%%%%%%%%%%%%%  SECTION 4  %%%%%%%%%%%%%%%%%%%%%%%%%%%%%%%
	%%%%%%%%%%%%%%%%%%%%%%%%%%%%%%%%%%%%%%%%%%%%%%%%%%%%%%%%%%%%%%%%%%%
	
	\section{Generalized Gelfand-Cetlin Data}\label{GS_data_section}
	We now define and examine \textit{generalized Gelfand--Cetlin data} in the context of symplectic reduction. In Section \ref{sec:CW-gelfand-cetlin-datum}, we review results of the first author and Weitsman on Gelfand--Cetlin data and abelianized symplectic quotients. The definition of a generalized Gelfand--Cetlin datum then appears in Section \ref{sec:gen-GC-data-definition}. We show that such data exist in Sections \ref{Subsection: Conventions on integrable systems} and \ref{Subsection: Existence}. In Section \ref{construction_of_GS_datum}, we provide preliminary results on the construction of generalized Gelfand--Cetlin data. The actual construction occurs in Section \ref{Subsection: Construction}.
	
	\subsection{Basic definitions}\label{sec:CW-gelfand-cetlin-datum}
	Let $G$ be a compact connected Lie group with Lie algebra $\g$ and rank $\ell$. Define the quantities
	$$u \coloneqq \frac{1}{2}(\dim \g - \ell) \qquad \text{and}\qquad b\coloneqq \frac{1}{2}(\dim \g + \ell)$$ and tori
	$$\mathbb{T}_{\mathrm{int}}\coloneqq\operatorname{U}(1)^u\quad\text{and}\quad\mathbb{T}_{\mathrm{big}}\coloneqq\operatorname{U}(1)^{b}.$$
	
	\begin{definition}\label{gsdatum}
		A \textit{Gelfand-Cetlin datum} is a pair $(\lambda_{\text{big}}, \g^{*}_{\text{s-reg}})$ of a continuous map
		$$\lambda_{\text{big}} = (\lambda_{1}, \dots, \lambda_{b}): \g^{*} \longrightarrow \R^b$$ and open dense subset $\g^{*}_{\text{s-reg}} \subset \g^{*}$ that satisfy the following conditions:
		\begin{itemize}
			\item[\textup{(i)}] The first $\ell$ component functions $\lambda_{1}, \dots, \lambda_{\ell}:\g^* \longrightarrow \R$ are $G$-invariant on $\g^{*}$ and smooth on $\g^{*}_{\text{reg}}$;
			\item[\textup{(ii)}] $\{(\text{d}\lambda_{1})_{\xi}, \dots, (\text{d}\lambda_{\ell})_{\xi}\}$ is a $\Z$-basis of the integral lattice $\Lambda_{\g_{\xi}}\subset \g_{\xi}$ for all $\xi \in \g^{*}_{\text{reg}}$;
			\item[\textup{(iii)}] $\g^{*}_{\text{s-reg}} \subset \g^{*}_{\text{reg}}$;
			\item[\textup{(iv)}] $\lambda_{\text{big}}\big\vert_{\g^{*}_{\text{s-reg}}}:\g_{\text{s-reg}}^{*} \longrightarrow \R^{b}$ is a smooth submersion and moment map for a Poisson Hamiltonian $\T_{\mathrm{big}}$-space structure on $\g^{*}_{\text{s-reg}}$;
			\item[\textup{(v)}] the restriction $\lambda_{\text{big}}\big\vert_{\g^{*}_{\text{s-reg}}} \longrightarrow \lambda_{\text{big}}(\g^{*}_{\text{s-reg}})$ is a principal $\T_{\mathrm{int}}$-bundle;
			\item[\textup{(vi)}] for any Hamiltonian $G$-space $M$ with moment map $\mu:M \longrightarrow \g^{*}$, $\mu^{-1}(\g_{\text{s-reg}})$ is a Hamiltonian $\T_{\mathrm{big}}$-space with moment map 
			$$(\lambda_{\text{big}} \circ \mu)\big\vert_{\mu^{-1}(\g^{*}_{\text{s-reg}})}:\mu^{-1}(\g^{*}_{\text{s-reg}}) \longrightarrow \R^{b}.$$
		\end{itemize}
	\end{definition}
	
	In \cite{crooks2024Gelfand}, the first author and Weitsman prove the following.
	
	\begin{theorem}
		Suppose that $(\lambda_{\emph{big}}, \mathfrak{g}_{\emph{s-reg}}^*)$ is a Gelfand--Cetlin datum. Given a Hamiltonian $G$-space $(M,\omega,\mu)$ and element $\xi \in \mathfrak{g}_{\emph{s-reg}}^*$, the symplectic quotient $M \sll{\xi} G$ is canonically isomorphic to \\ $\mu^{-1}(\g^{*}_{\emph{s-reg}})\sll{\lambda_{\emph{big}}(\xi)}\mathbb{T}_{\mathrm{big}}$ as a stratified symplectic space.
	\end{theorem}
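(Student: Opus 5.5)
We prove that for $\xi\in\g^*_{\text{s-reg}}$ there is a canonical isomorphism $M\sll{\xi}G\simeq M_{\text{s}}\sll{\lambda_{\text{big}}(\xi)}\T_{\text{big}}$ with $M_{\text{s}}\coloneqq\mu^{-1}(\g^*_{\text{s-reg}})$. The strategy is to match the two level sets set-theoretically and then compare the two group actions. Set $\eta\coloneqq\lambda_{\text{big}}(\xi)$ and $\Lambda\coloneqq\lambda_{\text{big}}\circ\mu|_{M_{\text{s}}}$. Write $\mathcal{F}_\eta\coloneqq\lambda_{\text{big}}^{-1}(\eta)\cap\g^*_{\text{s-reg}}$ for the fiber, which by Definition~\ref{gsdatum}(v) is a single $\T_{\text{int}}$-orbit. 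Then
\[
\Lambda^{-1}(\eta)=\mu^{-1}(\mathcal{F}_\eta).
\]
Since $\lambda_1,\dots,\lambda_\ell$ are $G$-invariant and the remaining components are not, I first check that $\mathcal{F}_\eta\subset G\cdot\xi$. This follows from (ii), (iv) and (v) by a dimension count: $\mathcal{F}_\eta$ is a $u$-dimensional torus, $u=\dim(G\cdot\xi)/2$, and it lies in the common level set of the Casimirs $\lambda_1,\dots,\lambda_\ell$, which near regular points is the coadjoint orbit.

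The key structural step is to show that the $\T_{\text{big}}$-action on $M_{\text{s}}$ factors as follows.
\begin{itemize}
\item[(a)] The first $\ell$ circle factors act through the $G_\zeta$-action. More precisely, the flow of $\lambda_i\circ\mu$ at $p$ is $\exp(t\,(d\lambda_i)_{\mu(p)})\cdot p$. Here $(d\lambda_i)_{\mu(p)}\in\g_{\mu(p)}$, and by (ii) these differentials form a $\Z$-basis of $\Lambda_{\g_{\mu(p)}}$. Hence the induced map identifies $\mathrm{U}(1)^\ell$ with the maximal torus $G_{\mu(p)}$.
\item[(b)] The remaining $\T_{\text{int}}$ factors act by flows that cover the free transitive $\T_{\text{int}}$-action on $\mathcal{F}_\eta$. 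This is the collective-function principle: the Hamiltonian flow of $f\circ\mu$ is $\mu$-related to the flow of $f$ on $\g^*$. Concretely, the flow at $p$ is $g_t(p)\cdot p$, where $g_t$ depends only on $\mu(p)$ and solves the ODE $\dot g_t g_t^{-1}=(df)_{\mu(g_t p)}$.
\end{itemize}

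With (a) and (b) in hand, I construct the map. The inclusion $j:\mu^{-1}(\xi)\hookrightarrow\Lambda^{-1}(\eta)$ is equivariant for $G_\xi\cong\mathrm{U}(1)^\ell\subset\T_{\text{big}}$, using that $G_\xi$ is a torus because $\xi$ is regular. It therefore induces a continuous map
\[
\varphi:\mu^{-1}(\xi)/G_\xi\longrightarrow\Lambda^{-1}(\eta)/\T_{\text{big}}.
\]
\begin{itemize}
\item Surjectivity: since $\T_{\text{int}}$ acts transitively on $\mathcal{F}_\eta$ and covers this action on $M$, every $\T_{\text{big}}$-orbit in $\Lambda^{-1}(\eta)$ meets $\mu^{-1}(\xi)$.
\item Injectivity: if $t\cdot p=q$ with $p,q\in\mu^{-1}(\xi)$, then by freeness of $\T_{\text{int}}$ on $\mathcal{F}_\eta$ the $\T_{\text{int}}$-component of $t$ is trivial. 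Hence $t$ lies in the first factor $\mathrm{U}(1)^\ell$, which acts as $G_\xi$.
\end{itemize}
Compactness of the tori, together with properness of the maps involved, upgrades the continuous bijection $\varphi$ to a homeomorphism.

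It remains to match strata and symplectic forms. The bijection $\phi$ on stabilizer classes comes from the splitting: the $\T_{\text{big}}$-stabilizer of $p\in\mu^{-1}(\xi)$ equals $(G_\xi)_p\times\{1\}$. Tori are abelian, so there are no conjugacy issues, and $\varphi$ respects orbit-type pieces. On each piece, the pullbacks of the reduced forms to $\mu^{-1}(\xi)_{\mathcal{C}}$ both equal $j^*\omega$. Theorem~\ref{thrm:Sjamaar-Lerman}(iii) then shows $\varphi$ is a symplectomorphism on each stratum, once I know it is smooth there. Smoothness follows from local sections: near $\xi$, the bundle map from (v) gives a smooth local section of $\T_{\text{int}}\to\mathcal{F}_\eta$, and hence a smooth retraction of $\Lambda^{-1}(\eta)_{\mathcal{C}}$ onto $\mu^{-1}(\xi)_{\mathcal{C}}$.

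I expect the main obstacle to be step (b), together with the claim that its lift to $M$ is well defined for all Hamiltonian $G$-spaces. Condition (vi) asserts the Hamiltonian action, but extracting the precise form of the flow requires the collective-function argument carried out carefully. That argument is needed to make both the freeness-transfer used for injectivity and the local retraction rigorous.
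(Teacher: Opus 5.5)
Your proposal is correct and follows essentially the same route as the paper's proof of Theorem~\ref{Theorem: Main result}, which recovers this statement when $\Sigma=\g^*_{\mathrm{reg}}$. Both arguments identify $G_\xi$ with $\T_{\mathrm{small}}$ through the lattice basis $\{(\mathrm{d}\lambda_i)_\xi\}_{i\le\ell}$, use $\T_{\mathrm{int}}$-equivariance of $\mu$ and the principal-bundle condition to get $\T_{\mathrm{int}}\cdot\mu^{-1}(\xi)=(\lambda_{\mathrm{big}}\circ\mu)^{-1}(\lambda_{\mathrm{big}}(\xi))$ with $\T_{\mathrm{int}}$ acting freely, descend the inclusion to a homeomorphism of quotients, match stabilizers, and compare reduced forms by pulling back to $\mu^{-1}(\xi)_{\mathcal{C}}$; the paper only additionally factors the descent through $\delta_\xi:\mu^{-1}(\xi)\times\T_{\mathrm{int}}\to\lambda_M^{-1}(\lambda_{\mathrm{big}}(\xi))$, and your properness argument and retraction built from the inverse of the orbit map $\T_{\mathrm{int}}\to\mathcal{F}_\eta$ supply details (continuity of the inverse, smoothness on strata) that the paper merely asserts.
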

	
	It is important to note that the preceding theorem only applies to elements $\xi \in \g^*_{\text{s-reg}}$, which are necessarily contained in $\g^*_{\text{reg}}$. 
	For moment maps that do not achieve values in $\g^*_\text{reg}$, this theorem is not particularly meaningful.
	
	\subsection{Generalized Gelfand--Cetlin data}\label{sec:gen-GC-data-definition}
	Let $G$ be a compact connected Lie group with Lie algebra $\g$. Let us fix a non-empty, connected Hamiltonian $G$-space $(M,\omega,\mu)$. We define a \textit{generalized Gelfand--Cetlin datum} for $M$, in contrast to Definition \ref{strat_on_smooth_man} not depending on $M$. A first step is the following.
	
	\begin{definition}\label{complexity}
		The \textit{complexity} of $M$ is
		\[
		C_G(M) \coloneqq \max\{\dim(M \sll{\xi} G) \ | \  \xi \in \mu(M) \},
		\] 
		where $\dim(M \sll{\xi} G)$ is the dimension of $M / \!\!/_\xi G$ as a stratified space, as defined in Section \ref{strat_on_smooth_man}.
	\end{definition}
	
	Consider the $M$-principal stratum $\Sigma\subset\g^*$. Let $\t^*_+ \subset \g^*$ be a choice of fundamental Weyl chamber, and $\sigma \coloneqq \Sigma \cap \t^*_+$ be the open face corresponding to $\Sigma$. The stabilizer subgroups $G_\eta$ are equal for all $\eta \in \sigma$. Write $G_\sigma$ for this common subgroup.
	Consider the quantities 
	\begin{align*}
		\ell & \coloneqq \dim(G_\sigma \big/ [G_\sigma, G_\sigma] ) = \dim \sigma, \\
		b & \coloneqq \frac{1}{2}(\dim M - C_G(M)), \text{ and }\\
		u &\coloneqq b - \ell.
	\end{align*}
	Note that these quantities depend on $M$. When $\mu$ attains a value in $\g^*_{\mathrm{reg}}$, $\ell$, $b$, and $u$ are as defined in Section \ref{sec:CW-gelfand-cetlin-datum}. 
	
	We construct the tori
	\[
	\T_{\text{small}} \coloneqq \operatorname{U}(1)^{\ell}, \quad \T_\text{int} \coloneqq \operatorname{U}(1)^u, \quad \text{and} \quad \T_\text{big} \coloneqq \T_\text{small} \times \T_\text{int} = \operatorname{U}(1)^b,
	\]
	and their respective Lie algebras
	\[
	\R_{\text{small}} \coloneqq \R^l, \quad \R_\text{int} \coloneqq \R^u, \quad \text{and} \quad \R_\text{big} \coloneqq \R_\text{small} \times \R_\text{int} = \R^b.
	\]

	Assume that a function $\lambda:\g^*\longrightarrow \R$ is smooth on $\Sigma$.
	In light of Lemma \ref{strata_cotantent_space}, we have $\mathrm{d}(\lambda\big\vert_\Sigma)_{\xi} \in [\g_\xi, \g_\xi]^\perp$ for all $\xi\in\Sigma$.
	Consider the $G$-module isomorphism $(-)^\vee:\g\longrightarrow \g^*$ from Equation \ref{innerprod_iso}.
	Let $x \in \g$ be the element such that $(-)^\vee(x) = \xi$. 
	Suppose also that $\lambda$ is $G$-invariant on $\g^*$, so that $\mathrm{d}(\lambda\big\vert_\Sigma)_{\xi} \in \g_\xi$. 
	It follows that $\mathrm{d}(\lambda\big\vert_\Sigma)_{\xi} \in \g_\xi \cap [\g_\xi, \g_\xi]^\perp = \z(\g_\xi)$.
	Using the canonical Lie algebra isomorphism $\z(\g_\xi) \cong \g_\xi/[\g_\xi, \g_\xi]$, we may interpret $\mathrm{d}(\lambda\big\vert_\Sigma)_{\xi}$ as an element $\g_\xi/[\g_\xi, \g_\xi]$.
	
	\begin{definition}\label{gen_gsdatum}
		
		A \textit{generalized Gelfand-Cetlin datum} for $M$ is a pair $(\lambda_{\text{big}}, \g^*_{M\text{-reg}})$ of a continuous map 
		$$
		\lambda_\text{big} = (\lambda_1, \dots, \lambda_b):\g^* \longrightarrow \R_\text{big}
		$$ 
		and subset $\g_{M\text{-reg}}^* \subset \g^*$ that satisfy the following conditions:
		\begin{enumerate}[(i)]
			\item the first $\ell$ component functions $\lambda_1, \dots, \lambda_\ell:\g^*\longrightarrow \R$ are $G$-invariant on $\g^*$ and smooth on $\Sigma$.
			\item $\{(\mathrm{d} (\lambda_1\big\vert_\Sigma))_{\xi}, \dots, (\mathrm{d}(\lambda_\ell\big\vert_\Sigma))_{\xi} \}$ form a $\Z$-basis of the integral lattice $\Lambda_{\g_\xi/[\g_\xi, \g_\xi]}\subset \g_\xi \big/ [\g_\xi, \g_\xi]$ for any $\xi \in \g^*_{M\text{-reg}}$, where one uses the previous paragraph to regard $(\mathrm{d}(\lambda_1\big\vert_\Sigma)_{\xi},\dots,(\mathrm{d}(\lambda_\ell\big\vert_\Sigma))_{\xi}$ as elements of $\g_\xi/[\g_\xi, \g_\xi]$.
			\item $\g^*_{M\text{-reg}} \subset \Sigma$ is open and dense;
			\item $\lambda_{\text{big}}\big\vert_{\g^*_{M\text{-reg}}}: \g^*_{M\text{-reg}} \longrightarrow \R_\text{big}$ is a smooth submersion and moment map for a Poisson Hamiltonian $\T_\text{big}$-space structure on $\g^*_{M\text{-reg}}$;
			\item $\g^*_{M\text{-reg}}$ is a union of fibers of $\lambda_\text{big}$, and the restriction $\lambda_\text{big}\big\vert_{\g^*_{M\text{-reg}}}:\g^*_{M\text{-reg}} \longrightarrow \lambda_\text{big}(\g^*_{M\text{-reg}})$ is a principal $\T_{\text{int}}$-bundle;
			\item composition with the moment map $\mu:M \longrightarrow \g^*$ makes $(\lambda_\text{big} \circ \mu)\big\vert_{\mu^{-1}(\g^*_{M\text{-reg}})}: \mu^{-1}(\g^*_{M\text{-reg}}) \longrightarrow \R_\text{big}$ the moment map for a Hamiltonian $\T_\text{big}$-space structure on $\mu^{-1}(\g^*_{M\text{-reg}}) \subset M$. 
		\end{enumerate}
		If these conditions are satisfied, we adopt the notation 
		\begin{align*}
			\lambda_\text{small} &\coloneqq (\lambda_1, \dots, \lambda_\ell):\g^* \longrightarrow \R_\text{small} \\
			\lambda_\text{int} &\coloneqq (\lambda_{\ell+1}, \dots, \lambda_b):\g^* \longrightarrow \R_\text{int} \\
			M_{\text{reg}} &\coloneqq \mu^{-1}(\g^*_{M\text{-reg}}) \\
			\lambda_M &\coloneqq (\lambda_\text{big} \circ \mu)\big\vert_{M_\text{reg}}: M_\text{reg} \longrightarrow \R_\text{big}.
		\end{align*}
	\end{definition}
	
	\begin{remark}
		Suppose that $\mu(M) \cap \g^*_{\mathrm{reg}} \neq \emptyset$. 
		Observe that a Gelfand--Cetlin datum in the sense of Definition \ref{gsdatum} is a generalized Gelfand--Cetlin datum for $M$.
		The converse is true modulo a subtle distinction between each definition's Condition (vi).
	\end{remark}
	
	\subsection{Conventions on integrable systems}\label{Subsection: Conventions on integrable systems}
	It is natural to wonder about the generality in which generalized Gelfand--Cetlin data exist. Before we can address this, a brief interlude on integrable systems is necessary. Consider a Poisson manifold $N$ and continuous map $f:N \longrightarrow \R^k$.
	We understand $f$ to be a completely integrable system if it restricts to a smooth Poisson submersion on an open dense subset $U \subset N$, where $\R^k$ has the zero Poisson structure and $k$ is $\dim N$ minus half the dimension of a generic symplectic leaf in $N$.
	It follows that a completely integrable system on $\mathfrak{g}^*$ is a continuous map $\phi:\mathfrak{g}^*\longrightarrow \mathbb{R}^{k}$, where $k = \frac{1}{2}(\dim\mathfrak{g} + \mathrm{rank}\hspace{2pt}\g)$, that satisfies the following property:
	there exists an open dense subset $U \subset \mathfrak{g}^*$ such that $\phi\big\vert_U$ is a smooth submersion with pairwise Poisson-commuting component functions.
	
	It follows from the above that a completely integrable system on a symplectic manifold $M$ is a continuous map $\psi:M \longrightarrow \R^{\frac{1}{2}\dim(M)}$ that restricts to a Poisson submersion on an open dense subset $U \subset M$.
	Suppose that $M$ is also Hamiltonian $G$-space with moment map $\mu:M \longrightarrow \mathfrak{g}^*$. 
	A completely integrable system $\psi:M \longrightarrow \R^{\frac{1}{2}\dim(M)}$ is called \textit{collective} if $\psi = \phi \circ \mu$ for some completely integrable system $\phi$ on $\g^*$.
	
	\subsection{Existence of generalized Gelfand--Cetlin data}\label{Subsection: Existence}
	Generalized Gelfand--Cetlin data exist for any Hamiltonian $G$-space, where $G$ is a compact connected Lie group. 
	The earliest constructions were for coadjoint orbits of the unitary groups $\operatorname{U}(n)$ and special orthogonal groups $\operatorname{SO}(n)$.
	These are the classical Gelfand--Cetlin systems defined by Guillemin and Sternberg \cite{guillemin1983GelfandCetlin}. 
	Their construction hinges on Thimm's method, as described in \cite{guillemin1983collective}.
	A similar, slightly adjusted process is defined in Section \ref{construction_of_GS_datum}.
	
	A first step is to discuss the \textit{base affine space} and its relation to \textit{symplectic implosion}. To this end, let $G_{\C}$ be the complexification of $G$. 
	Choose a maximal unipotent subgroup $U \subset G_{\C}$.
	Consider the $U$-action on $G_{\C}$ defined by $u \cdot g \coloneqq gu^{-1}$ for all $g \in G_{\C}$ and $u \in U$.
	One knows that the algebra $\C[G_{\C}]^U$ is finitely generated.
	The affine variety with coordinate ring $\C[G_{\C}]^U$ is called the \textit{base affine space} and denoted $G_{\C} \sll{} U$.
	While $G_{\C}\sll{} U$ is generally singular, it carries the structure of a stratified Hamiltonian $G \times T$-space for a maximal torus $T \subset G$.
	This is made precise via symplectic implosion, as described below.
	
	Consider a Hamiltonian $G$-space $M$ with moment map $\mu:M \longrightarrow \g^*$. 
	Choose a maximal torus $T \subset G$ and fundamental Weyl chamber $\t^*_+ \subset \g^*$.
	The \textit{symplectic implosion of $M$} is defined as 
	\begin{equation}\label{symlectic_implosion_eq}
		M_\mathrm{impl}\coloneqq \mu^{-1}(\t^*_+)/ \sim ,
	\end{equation}
	where $p \sim q$ if $q = g \cdot p$ for some $g \in [G_{\mu(p)}, G_{\mu(p)}]$.
	The $T$-action on $\mu^{-1}(\t^*_+)$ descends to a $T$-action on $M_\mathrm{impl}$. 
	Guillemin--Jeffrey--Sjamaar realize $M_\mathrm{impl}$ as a stratified Hamiltonian $T$-space \cite{gui-jef-sja:02}.
	One realizes the stratified Hamiltonian $G \times T$-space structure on $G_{\C}\sll{}U$ by realizing this space as a symplectic implosion.
	The following are some details.
	
	Consider the case where $M = T^*G$, equipped with the cotangent lift of the following $G$-action on $G$:
	$g \cdot h = hg^{-1}$ for all $g,h \in G$.
	The implosion $M_\mathrm{impl} = (T^*G)_\mathrm{impl}$ is a stratified Hamiltonian $G \times T$-space;
	the action of $T = \{e\} \times T \subset G \times T$ is the $T$-action on any implosion, and that of $G = G \times \{e\} \subset G \times T$ is induced from the cotangent lift of $G$ acting on itself by left multiplication.
	Using the left trivialization, the symplectic implosion of $T^*G$ can be written as 
	\begin{equation}
		(T^*G)_\mathrm{impl} = \bigsqcup_{\sigma \subset \t^*_+} \big( G\big/[G_\sigma, G_\sigma] \times \sigma \big),
	\end{equation}
	where $\sigma \subset \t^*_+$ ranges over the open faces.
	The subsets $ G\big/[G_\sigma, G_\sigma] \times \sigma $ are the symplectic strata of $(T^*G)_\mathrm{impl}$.
	\begin{theorem}[Guillemin--Jeffrey--Sjamaar]
		If $G$ is simply-connected, then there is a canonical $G \times T$-equivariant homeomorphism $(T^*G)_\mathrm{impl} \cong G_{\C} \sll{} U$.
	\end{theorem}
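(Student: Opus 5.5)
The plan is to build the homeomorphism from the identification of \((T^*G)_\mathrm{impl}\) with the affinization of \(G_{\C}/U\), following Guillemin--Jeffrey--Sjamaar. Since \(G\) is simply connected, the algebra \(\C[G_{\C}]^U\) decomposes as \(\bigoplus_{\lambda} V_\lambda^*\), with \(\lambda\) running over dominant weights. It is generated by the finitely many fundamental representations \(V_{\varpi_1},\dots,V_{\varpi_\ell}\). This gives a closed \(G\times T\)-equivariant embedding
\[
G_{\C}\sll{}U \hookrightarrow E\coloneqq \bigoplus_{i=1}^{\ell} V_{\varpi_i},
\]
namely the closure of the \(G_{\C}\)-orbit of \(v=(v_1,\dots,v_\ell)\), where each \(v_i\) is a highest weight vector. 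The torus \(T\) acts on the \(i\)-th summand through the character \(\varpi_i\).

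I will then define \(F:\mu^{-1}(\t_+^*)\longrightarrow E\) using the left trivialization \(T^*G\cong G\times\g^*\). For \((g,\xi)\) with \(\xi\) in an open face \(\sigma\), set
\[
F(g,\xi)=\bigl(c_i(\xi)\, g\cdot v_i\bigr)_i .
\]
The coefficient \(c_i(\xi)\) should be chosen so that \(\tfrac12 |c_i|^2\) equals the pairing \(\langle \xi,\varpi_i^\vee\rangle\), or some similar normalization. This choice makes \(F\) intertwine the \(T\)-moment maps, with \(E\) carrying the flat symplectic form. The coefficient \(c_i(\xi)\) vanishes exactly when \(\xi\) lies on the wall of the chamber where \(\varpi_i\) pairs to zero.

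The key step is to check that \(F\) descends to the quotient and is injective there. We have \(F(g,\xi)=F(g',\xi')\) if and only if \(\xi=\xi'\) and \(g^{-1}g'\) fixes every \(v_i\) with \(c_i(\xi)\neq0\). The common stabilizer in \(G\) of those highest weight lines, with their phases fixed, is exactly \([G_\sigma,G_\sigma]\). This uses two facts:
\begin{itemize}
\item \(G_\sigma\) is the stabilizer of the lines \(\C v_i\) for the nonvanishing \(i\);
\item a simply connected \(G\) makes \([G_\sigma,G_\sigma]\) the connected subgroup killed by those characters.
\end{itemize}
So \(F\) induces a continuous injection \(\bar F\) from \((T^*G)_\mathrm{impl}\). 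It is \(G\times T\)-equivariant by construction.

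Next I will show surjectivity onto \(\overline{G_{\C}\cdot v}\). By the Iwasawa decomposition \(G_{\C}=G A N\), every point of the orbit closure can be written as \(g\cdot(t_i v_i)_i\) with \(t_i\ge0\). The point lies in the orbit itself exactly when all \(t_i>0\), and the boundary strata correspond to some \(t_i\) vanishing. Each such tuple is hit by a suitable \(\xi\in\t^*_+\). This step uses that the orbit closure is the union of the \(G\)-sweeps of the cone spanned by the \(v_i\).

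Finally, \(\bar F\) is proper, because norms of images grow with \(|\xi|\), and the target is Hausdorff, so \(\bar F\) is a homeomorphism. I expect the main obstacle to be the exact stabilizer computation in the descent step, including the normalization of \(c_i\) needed for continuity across faces.
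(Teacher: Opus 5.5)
The paper does not prove this statement itself; it quotes it from Guillemin--Jeffrey--Sjamaar, and your argument is essentially theirs: realize $G_{\C}\sll{}U$ as $\overline{G_{\C}\cdot v}\subset\bigoplus_i V_{\varpi_i}$, send $(g,\xi)\mapsto g\cdot\sum_i\sqrt{2\langle\xi,\alpha_i^\vee\rangle}\,v_i$, use simple connectivity to identify the stabilizer of $\sum_{i\in S}c_iv_i$ with $[G_\sigma,G_\sigma]$, and conclude from $\overline{G_{\C}\cdot v}=G\cdot\{\sum_i t_iv_i : t_i\ge 0\}$ together with properness. The one correction is notational: your $\varpi_i^\vee$ must be the simple coroot $\alpha_i^\vee$ (the basis dual to the fundamental weights), not the fundamental coweight, because with the coweight $c_i$ would not vanish on the walls and $F$ would not descend to the implosion.
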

	%Guillemin--Jeffrey--Sjamaar give a canonical $G \times T$-equivariant homeomorphism $(T^*G)_\mathrm{impl} \cong G_{\C} \sll{} U$.
	Assume that $G$ is simply-connected.
	The previous result allows one to equip $G_{\C}\sll{}U$ with the stratified symplectic space structure for which the advertised homoemorphism is an isomorphism of straified symplectic spaces.
	It follows that the open faces of $\t^*_+$ index the symplectic strata of $G_{\C} \sll{}U$.
	At the same time, Hoffman--Lane use a so called \textit{good valuation} on $G^\mathbb{C} \sll{} U$ to construct a \textit{toric degeneration} $\varphi:G^\mathbb{C} \sll{} U \longrightarrow (G^\mathbb{C}\sll{} U)^\mathrm{deg}$ to a toric variety $(G^\mathbb{C}\sll{} U)^\mathrm{deg}$.
	One feature of $(G^\mathbb{C}\sll{} U)^\mathrm{deg}$ is that it also carries a stratification with strata indexed by the open faces of $\t^*_+$.
	For each such face $\sigma \subset \t^*_+$, there is an open dense subset $U_\sigma$ of the corresponding symplectic stratum of $G^\mathbb{C} \sll{} U$ for which $\varphi$ restricts to an open symplectic embedding to the corresponding stratum of $(G^\mathbb{C} \sll{} U)^\mathrm{deg}$.
	
	Let us return to the case of an arbitrary compact connected Lie group $G$.
	We have $G = (G_{ss} \times Z)\big/D$ for $G_{ss}$ a connected, simply-connected compact Lie group, $Z$ a compact torus, and $D \subset G_{ss} \times Z$ a finite central subgroup such that $(\{e\} \times Z)\cap D = \{e\}$.
	It follows that
	$$
	(T^*G)_{\mathrm{impl}} \simeq \left( T^*G_{ss} \times T^*Z\right)_{\mathrm{impl}}\big/D  \simeq \big( (T^*G_{ss})_{\mathrm{impl}} \times T^*Z \big)\big/ D \simeq (G^{\mathbb{C}}_{ss} \sll{} U_{ss} \times T^*Z) \big/D,
	$$
	where $G_{ss}^\C$ is the complexification of $G_{ss}$ and $U_{ss} \subset G_{ss}$ is a maximal unipotent subgroup.
	Using the toric degeration on $G_{ss}^\mathbb{C}\sll{} U_{ss}$, Hoffman--Lane induce a toric degeration $\varphi_{T^*G}:(T^*G)_{\mathrm{impl}}\longrightarrow (T^*G)_{\mathrm{impl}}^{\mathrm{deg}}$.
	The toric variety structure on $(T^*G)_{\mathrm{impl}}^{\mathrm{deg}}$ defines an integrable system $F:(T^*G)_{\mathrm{impl}}^{\mathrm{deg}}\longrightarrow \R^k$, where $k = \frac{1}{2}(\dim \g + \text{rank } \g)$.
	There exists a unique integrable system $\phi:\g^* \longrightarrow \R^k$ for which the diagram 
	\begin{center}
		\begin{tikzcd}
			(T^*G)_\mathrm{impl} \arrow[r, "\varphi_{T^*G}"] \arrow[d, "\mu_L"] & (T^*G)_\mathrm{impl}^\mathrm{deg} \arrow[d, "F"] \\
			\g^* \arrow[r, dashrightarrow, "\phi"] & \R^k
		\end{tikzcd}
	\end{center}
	commutes,
	where $\mu_L:(T^*G)_\mathrm{impl} \longrightarrow \g^*$ is the canonical moment map for the cotangent lift of $G$ acting on itself by left multiplication; see Data 5.2 and Theorem 6.2 in \cite{hoffman2025stratified}.
	
	We now consider collective integrable systems on a Hamiltonian $G$-space $M$ with moment map $\mu:M \longrightarrow \g^*$.
	In order to connect the integrable systems defined on $(T^*G)_\mathrm{impl}$ and $\g^*$, Hoffman--Lane use symplectic contraction \cite{hilgert2017contraction}.
	The symplectic contraction of $M$ is the Hamiltonian $G \times T$-space
	$$
	M_{\mathrm{sc}}:= (M_{\mathrm{impl}} \times (T^*G)_{\mathrm{impl}})\sll{0} T.
	$$
	A notable property of $M_\mathrm{sc}$ is the existence of a map $\Phi_\mathrm{sc}:M \longrightarrow M_\mathrm{sc}$ that restricts to a symplectomorphism on an open dense subset $M_\mathrm{reg}\subset M$.
	Hoffman--Lane then define the \textit{toric contraction} of $M$ to be
	$$
	M_{\mathrm{tc}}:= (M_{\mathrm{impl}} \times (T^*G)_{\mathrm{impl}}^{\mathrm{deg}})\sll{0} T.
	$$
	The toric degeneration $\varphi_{T^*G}:(T^*G)_\mathrm{impl} \longrightarrow (T^*G)_\mathrm{impl}^\mathrm{deg}$ induces a map $\varphi_\mathrm{sc}:M_\mathrm{sc} \longrightarrow M_\mathrm{tc}$.
	The integrable system $F:(T^*G)_\mathrm{impl}^\mathrm{deg}\longrightarrow \R^k$ then induces an integrable system $M_\mathrm{tc}\longrightarrow\mathbb{R}^k$.
	By Proposition 6.8 and Theorem 1.1 in \cite{hoffman2025stratified}, the following diagram is commutative:
	\begin{center}
		\begin{tikzcd}
			M \arrow[r, "\Phi_\mathrm{sc}"] \arrow[d, "\mu"] & M_{\mathrm{sc}} \arrow[r, "\varphi_\mathrm{sc}"] & M_{\mathrm{tc}} \arrow[d] \\
			\mathfrak{g}^* \arrow[rr, "\phi"] && \mathbb{R}^{k}.
		\end{tikzcd} 
	\end{center}
	Proposition 6.9 in \cite{hoffman2025stratified} gives a connected, dense, open $T$-invariant subset of $M_\mathrm{reg} \subset M$ for which the map $\varphi_\mathrm{sc} \circ \Phi_\mathrm{sc}\big\vert_{M_\mathrm{reg}}:M_{\mathrm{reg}}\longrightarrow M_\mathrm{tc}$ is an isomorphism of Hamiltonian $T \times T$-spaces.
	We may take the induced torus action on $M$ and quotient by the kernel of this action. 
	This gives an effective torus action on $M$. 
	The corresponding map $M \longrightarrow \Bbb R^p$ satisfies conditions (iv)-(vi) of Definition \ref{gen_gsdatum}.
	
	\subsection{Preliminaries to the construction of generalized Gelfand-Cetlin data} \label{construction_of_GS_datum}
	We aim to provide a method for constructing generalized Gelfand-Cetlin data. 
	Our first step is to construct functions $\lambda_1, \dots, \lambda_\ell:\g^* \longrightarrow \R$ that satisfy Conditions (i) and (ii) from Definition \ref{gen_gsdatum}. 
	To this end, consider a compact connected Lie group $G$ and non-empty, connected Hamiltonian $G$-space $(M,\omega,\mu)$. Let $\g$ be the Lie algebra of $G$ and $\Sigma \subset \g^*$ be the $M$-principal stratum.
	Choose a $G$-invariant inner product on $\g$, a Cartan subalgebra $\t \subset \g$, and a fundamental Weyl chamber $\t_+ \subset \t$. 
	Recall that the inner product allows us to identify $\t_+$ with a fundamental domain $\t^*_+ \subset \g^*$ for the $G$-action on $\g^*$.
	
	We may define a continuous surjection $\pi:\g^* \longrightarrow \t^*_+$, called the \textit{sweeping map}.
	It is defined by the property that 
	$$
	(G\cdot \xi) \cap \t^*_+ = \{\pi(\xi)\} \quad \text{for all } \xi \in \g^*,
	$$
	where $G \cdot \xi$ is the coadjoint orbit of $\xi$.
	The fibers of $\pi$ are coadjoint orbits of $G$, and $\pi(\Sigma) = \sigma$, where $\sigma$ is the open face of $\t^*_+$ corresponding to the orbit-type stratum $\Sigma$. 
	This gives the following Cartesian square:
	\begin{center}
		\begin{tikzcd}
			\Sigma \arrow[r, hookrightarrow]\arrow[d, swap, "\pi\big\vert_{\Sigma}"]& \g^*  \arrow[d, "\pi"] \\
			\sigma \arrow[r, hookrightarrow]& \t^*_+
		\end{tikzcd}.
	\end{center}
	Note that the left vertical map $\pi\big\vert_\Sigma: \Sigma \longrightarrow \sigma\subset \t^*_+$ is a smooth surjective submersion. 
	
	Recall that $\g_\sigma \subset \g$ denotes the common $\g$-centralizer for elements of $\sigma$.
	Choose a $\Z$-basis $\{\phi_1, \dots, \phi_\ell \}$ of the lattice $\Lambda_{\g_\sigma/ [\g_\sigma, \g_\sigma]}$. 
	At the same time, recall the canonical Lie algebra isomorphism $\mathfrak  \g_\sigma\big/ [\g_\sigma, \g_\sigma]\cong \mathfrak{z}(\g_\sigma)$. 
	This allows us to regard $\phi_1, \dots, \phi_\ell$ as elements of $\z(\g_\sigma)$.
	The pairing between $\g$ and $\g^*$ now allows one to regard $\phi_1, \dots, \phi_\ell$ as functions on $\t^*_+$.
	For each $k \in \{1, \dots, \ell\}$, we define a continuous function
	$$
	\lambda_k = \phi_k \circ \pi:\g^* \longrightarrow \R.
	$$
	These functions are $G$-invariant because the sweeping map is so.
	Since the restriction of $\pi$ to $\Sigma$ is smooth, $\lambda_1, \dots, \lambda_\ell$ are smooth on $\Sigma$ and satisfy Condition (i).
	
	We next show that $\lambda_1, \dots, \lambda_\ell$ satisfy Condition (ii) of Definition \ref{gen_gsdatum}.
	Lemma \ref{strata_cotantent_space} and the $G$-invariance of $\lambda_1, \dots, \lambda_\ell$ imply that $\{(\mathrm{d}(\lambda_1\big\vert_\Sigma))_{\xi}, \dots, (\mathrm{d}(\lambda_\ell\big\vert_\Sigma))_{\xi}\} \in [\g_\xi, \g_\xi]^\perp \cap \g_\xi = \z(\g_\xi) \cong \g_\xi/ [\g_\xi, \g_\xi]$ for all $\xi \in \Sigma$; see Section \ref{sec:gen-GC-data-definition} for more details.
	This features in the following proposition.
	
	\begin{proposition}
		If $\xi \in \Sigma$, then $\{(\mathrm{d}(\lambda_1\big\vert_\Sigma))_{\xi}, \dots, (\mathrm{d} (\lambda_\ell\big\vert_\Sigma))_{\xi}\}$ is a $\Z$-basis of the lattice $\Lambda_{\g_\xi/[\g_\xi, \g_\xi]}$.
	\end{proposition}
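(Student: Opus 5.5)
First reduce to the face $\sigma$ via equivariance, then compute the differentials on $\sigma$ directly, and finally transport the result back along the $G$-action.

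\emph{Reduction to $\xi\in\sigma$.} Write $\xi=\mathrm{Ad}^*_g(\eta)$ with $\eta=\pi(\xi)\in\sigma$ and $g\in G$. Since each $\lambda_k$ is $G$-invariant and $\Sigma$ is $G$-stable, differentiating $\lambda_k|_\Sigma\circ\mathrm{Ad}^*_g=\lambda_k|_\Sigma$ at $\eta$ gives
$$(\mathrm{d}(\lambda_k|_\Sigma))_\xi=\mathrm{Ad}_g\big((\mathrm{d}(\lambda_k|_\Sigma))_\eta\big).$$
Here both differentials are viewed as elements of $\g$ via the pairing with $\g^*$. Next, $\mathrm{Ad}_g$ restricts to a Lie algebra isomorphism $\g_\eta\to\g_\xi$, and it comes from the group isomorphism $G_\eta\to G_\xi$ given by conjugation by $g$. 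It therefore carries $[\g_\eta,\g_\eta]$ to $[\g_\xi,\g_\xi]$ and $\z(\g_\eta)$ to $\z(\g_\xi)$. Because it intertwines the exponential maps, it also carries the integral lattice of the torus $G_\eta/[G_\eta,G_\eta]$ to that of $G_\xi/[G_\xi,G_\xi]$, i.e.\ $\Lambda_{\g_\eta/[\g_\eta,\g_\eta]}$ to $\Lambda_{\g_\xi/[\g_\xi,\g_\xi]}$. So it suffices to treat $\xi=\eta\in\sigma$, where $\g_\eta=\g_\sigma$.

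\emph{Computation on $\sigma$.} Take $\eta\in\sigma$. By Lemma \ref{strata_cotantent_space}, $T_\eta\Sigma=\mathrm{ann}_{\g^*}([\g_\sigma,\g_\sigma])$, and we already know $(\mathrm{d}(\lambda_k|_\Sigma))_\eta\in\z(\g_\sigma)$. An element of $\z(\g_\sigma)$ is determined by its pairing with $T_\eta\Sigma$. Decomposition \eqref{lie_alg_stratum_tangent_decomp} gives
$$T_\eta\Sigma=T_\eta(G\cdot\eta)\oplus T_\eta\sigma,$$
where $T_\eta\sigma=\z(\g_\sigma)^\vee$ and $T_\eta(G\cdot \eta)=\mathrm{ann}(\g_\sigma)$. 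The second summand pairs trivially with $\z(\g_\sigma)$, so it is enough to evaluate the differential on $T_\eta\sigma$. On $\sigma$ the sweeping map is the identity, so $\lambda_k|_\sigma$ is the restriction of the linear function $\langle\cdot,\phi_k\rangle$. Hence
$$(\mathrm{d}(\lambda_k|_\Sigma))_\eta(v)=v(\phi_k)\quad\text{for } v\in T_\eta\sigma.$$
Along $T_\eta(G\cdot\eta)$ both $\mathrm{d}\lambda_k$ (by $G$-invariance) and pairing with $\phi_k\in\g_\sigma$ vanish. Therefore $(\mathrm{d}(\lambda_k|_\Sigma))_\eta=\phi_k$ as elements of $\z(\g_\sigma)$, and this set is a $\Z$-basis of $\Lambda_{\g_\sigma/[\g_\sigma,\g_\sigma]}$ by construction.

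\emph{Main obstacle.} The delicate step is the identification of the lattice: one must check that the lattice $\Lambda_{\g_\xi/[\g_\xi,\g_\xi]}$ is transported correctly under $\mathrm{Ad}_g$ and under the identification $\z(\g_\xi)\cong\g_\xi/[\g_\xi,\g_\xi]$. I will handle this by defining the lattice intrinsically as $\frac{1}{2\pi}\ker\exp$ of the torus $G_\xi/[G_\xi,G_\xi]$. This lattice is natural under group isomorphisms, so the equivariance argument applies to it directly. I will also confirm that $T_\eta\sigma=\z(\g_\sigma)^\vee$, using that $\sigma$ is open in $\z(\g_\sigma)^\vee$, since $\dim\sigma=\ell=\dim\z(\g_\sigma)$.
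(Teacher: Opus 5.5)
Your proposal is correct and follows essentially the same route as the paper. You use $G$-invariance and the fact that $\mathrm{Ad}_g$ carries one lattice to the other to reduce to $\xi\in\sigma$. You then split $T_\eta\Sigma$ into the orbit directions, which both $\mathrm{d}(\lambda_k|_\Sigma)_\eta$ and $\phi_k$ annihilate, and $T_\eta\sigma$, on which $\pi$ is the identity, to conclude that $(\mathrm{d}(\lambda_k|_\Sigma))_\eta=\phi_k$. Your care in defining the lattice intrinsically via the torus $G_\xi/[G_\xi,G_\xi]$ sharpens the paper's brief claim that $\mathrm{Ad}_{g^{-1}}$ matches the lattices. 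One wording slip: the summand that pairs trivially with $\z(\g_\sigma)$ is $T_\eta(G\cdot\eta)$, not the second summand of your displayed decomposition.
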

	\begin{proof}
		Choose $g \in G$ such that $\text{Ad}^*_g(\xi) \in \sigma$. 
		Since $\lambda_k$ is $G$ invariant for $k \in \{1, \dots, \ell\}$, we have 
		\begin{align*}
			(\mathrm{d}(\lambda_k\big\vert_\Sigma))_{\xi} &= (\mathrm{d}(\lambda_k\big\vert_\Sigma))_{\text{Ad}^*_g(\xi)} \circ \text{Ad}^*_g \\
			&= \text{Ad}_{g^{-1}}\big((\mathrm{d}(\lambda_k\big\vert_\Sigma))_{\text{Ad}^*_g(\xi)}\big). 
		\end{align*}
		The adjoint representation $\text{Ad}_{g^{-1}}:\g \longrightarrow \g$ restricts to a $\Z$-module isomorphism $\Lambda_{\g_\sigma / [\g_\sigma, \g_\sigma]} \xrightarrow{\simeq} \Lambda_{\g_\xi / [\g_\xi, \g_\xi]}$.
		We may therefore assume that $\xi \in \sigma$, without the loss of generality. 
		It is therefore sufficient to prove $(\mathrm{d}(\lambda_k\big\vert_\Sigma))_{\xi} = \phi_k$ for all $k \in \{1, \dots, \ell\}$.
		
		Fix $k \in \{1, \dots, \ell\}$.
		The tangent space of the $M$-principal stratum at $\xi$ may be decomposed as 
		$$
		T_\xi\Sigma = T_\xi(G\cdot \xi) + T_\xi\sigma,
		$$
		where $\sigma$ is the open face corresponding to $\Sigma$.
		The function $\lambda_k$ is $G$-invariant, which implies that $T_\xi(G\cdot \xi) \subset \ker (\mathrm{d}(\lambda_k\big\vert_\Sigma))_{\xi}$. 
		On the other hand, we have 
		$$
		T_\xi(G\cdot \xi) = \text{ann}_{\g^*}(\g_\xi)= \text{ann}_{\g^*}(\g_\sigma).
		$$
		This implies $T_\xi(G\cdot \xi) \subseteq \text{ann}_{\g^*}(\mathfrak{z}(\g_\sigma))$ and $T_\xi(G\cdot \xi) \subset \ker \phi_k$.
		With these two facts, it only remains to show $\mathrm{d}(\lambda_k\big\vert_\Sigma)_{\xi}(\eta) = \phi_k(\eta)$ for $\eta \in T_\xi \sigma$.
		The sweeping map $\pi$ is smooth when restricted to the stratum $\Sigma$. 
		Thus $\mathrm{d}_\xi(\lambda_k\big\vert_\Sigma)_{\xi} = \phi_k \circ \mathrm{d}\pi_{\xi}$.
		Since $\pi(\eta) = \eta$ for all $\eta \in \sigma$, it is clear that $\phi_k \circ \mathrm{d}\pi_{\xi} = \phi_k$. 
		It follows that $(\mathrm{d}(\lambda_k\big\vert_\Sigma))_{\xi} = \phi_k$ for all $k \in \{1, \dots, \ell\}$ and $\{(\mathrm{d}(\lambda_1\big\vert_\Sigma))_{\xi}, \dots, (\mathrm{d}(\lambda_\ell\big\vert_\Sigma))_{\xi}\}$ is a basis for $\Lambda_{\g_\sigma/[\g_\sigma, \g_\sigma]}$.
	\end{proof}
	
	\subsection{Construction of generalized Gelfand-Cetlin Datum: Thimm's method}\label{Subsection: Construction}
	Building from the ideas of Section \ref{construction_of_GS_datum}, we proceed to construct the remaining $b-\ell$ functions for a generalized Gelfand-Cetlin datum. 
	Constructing the remaining function relies on Thimm's method and follows a similar construction to \cite{guillemin1983collective}.
	We retain the setup and notation used in the previous section.
	Let $G = G_0 \supset G_1 \supset \dots \supset G_m$ be a descending chain of connected closed subgroups with respective Lie algebras $\g = \g_0 \supset \g_1 \supset \dots \supset \g_m$.
	For each Lie algebra, choose a Cartan subalgebra $\t_k \subset \g_k$ and fundamental Weyl chamber $(\t_k)_+ \subset \t_k$.
	Using the invariant inner product from the previous section, we may identify these Weyl chambers with corresponding subsets $(\t_k^*)_+ \subset \g^*_k$.
	We may define sweeping maps $\pi_k:\g^*_k \longrightarrow (\t_k^*)_+$ by the properties that $(G_k \cdot \xi) \cap (\t_k^*)_+ = \{\pi_k(\xi)\}$ for all $\xi \in \g^*_k$; these are continuous surjections.
	For each $k \in \{1,\dots, m\}$, let $p_k:\g^* \longrightarrow \g^*_k$ be the dual map to the inclusion $\g_k \longrightarrow \g$. 
	If $\mu:M \longrightarrow \g^*$ is the moment map for the Hamiltonian $G$-space from the section above, then 
	$$
	\mu_k \coloneqq p_k \circ \mu: M \longrightarrow \g^*_k
	$$
	is a moment map for the Hamiltonian action of $G_k \subset G$ on $M$.  
	
	Let $\Sigma_k$ be the $M$-principal stratum of $\g^*_k$ corresponding to the moment map $\mu_k$; see Theorem \ref{sympl_cross_section} and Definition \ref{def:M-principal-stratum}. 
	Write $\sigma_k$ for the open face of $(\t_k^*)_+$ corresponding to $\Sigma_k$.
	Let $\ell = \ell_0 , \ell_1 , \dots , \ell_m$ be the respective dimensions of $\sigma_0, \dots, \sigma_m$.
	For each $k \in \{1, \dots, m\}$, choose a $\Z$-basis $\{\phi_{k1}, \dots, \phi_{k\ell_k}\}$ of the lattice $\Lambda_{({\g_k})_{\sigma_k}/[({\g_k})_{\sigma_k}, ({\g_k})_{\sigma_k}]} \subset (\g_k)_{\sigma_k}\big/ [(\g_k)_{\sigma_k}, (\g_k)_{\sigma_k}]$.
	As in the previous section, we define the functions 
	$$
	\nu_{kj}\coloneqq \phi_{kj}\circ \pi_k: \g^*_k \longrightarrow \R
	$$
	for $j \in \{1, \dots, \ell_k\}$. 
	The functions $\nu_{k1}, \dots, \nu_{k\ell_k}$ are $G_k$-invariant, continuous on $\g^*_k$, and smooth on $\Sigma_k$.
	Note that these functions are not necessarily $G$-invariant.
	
	Consider the functions on $\g^*$ defined by
	$$
	\lambda_{kj} \coloneqq \nu_{kj}\circ p_k:\g^* \longrightarrow \R \qquad \text{for }k \in \{1, \dots, m\} \ \text{and } l \in \{1,\dots, \ell_k\}.
	$$
	Let $c = \ell_0 + \ell_1 + \dots + \ell_m$.
	We may organize and enumerate the functions in the following manner:
	\begin{align*}
		\lambda_\text{big}  &\coloneqq (\lambda_1, \dots, \lambda_c) \\
		&\coloneqq (\lambda_{1}, \dots, \lambda_{\ell}, \lambda_{11}, \dots, \lambda_{1\ell_1}, \dots, \lambda_{m1}, \dots, \lambda_{m\ell_m})
	\end{align*}
	Observe that $\lambda_\text{big}$ is smooth on the open subset 
	$$
	\mathcal U \coloneqq \Sigma \cap \bigcap_{k=1}^m p_k^{-1}(\Sigma_k) \subset \g^*.
	$$
	Define 
	$$
	\g^*_{M\text{-reg}} \coloneqq \{ \xi \in \mathcal U \ | \ (\mathrm{d}\lambda_\text{big})_{\xi} \ \text{is surjective}\}.
	$$
	We have the following proposition from \cite{guillemin1983GelfandCetlin}.
	
	\begin{proposition}
		Let $\lambda_\textup{big}$ and $\g^*_{M\textup{-reg}}$ be defined as above. 
		The following statements are true:
		\begin{enumerate}
			\item $\lambda_{\textup{big}}\big\vert_{\g^*_{M\textup{-reg}}}: \g^*_{M\textup{-reg}} \longrightarrow \R_\textup{big}$ is a smooth submersion and moment map for a Poisson Hamiltonian $\T_\textup{big}$-space structure on $\g^*_{M\textup{-reg}}$;
			\item $(\lambda_\textup{big} \circ \mu)\big\vert_{\mu^{-1}(\g^*_{M\textup{-reg}})}: \mu^{-1}(\g^*_{M\textup{-reg}}) \longrightarrow \R_\textup{big}$ is a moment map for a Hamiltonian $\T_\textup{big}$-space structure on $\mu^{-1}(\g^*_{M\textup{-reg}}) \subset M$. 
		\end{enumerate}
	\end{proposition}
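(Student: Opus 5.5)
We follow Thimm's method as in Guillemin--Sternberg, adapted to the strata $\Sigma$ and $\Sigma_k$ instead of the regular loci.

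\textbf{Step 1: commutativity.} We first show that the components of $\lambda_{\textup{big}}$ pairwise Poisson-commute on $\mathcal U$. Fix $k\le k'$. The pullback $p_k^*$ of a $G_k$-invariant function on $\g_k^*$ Poisson-commutes with every function pulled back from $\g_{k'}^*$, because $\g_{k'}\subset\g_k$. Concretely, for $\xi\in\mathcal U$ the differential $(\mathrm{d}\lambda_{kj})_\xi$ lies in $\z((\g_k)_{p_k(\xi)})\subset\g_k$. The same argument as Lemma~\ref{strata_cotantent_space} and the proposition preceding Thimm's construction, applied to $G_k$, shows this element centralizes $p_k(\xi)$. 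Hence $\{\lambda_{kj},\lambda_{k'j'}\}(\xi)=\xi([x,y])=p_k(\xi)([x,y])$ with $y\in\g_{k'}\subset\g_k$, and this vanishes because $x\in(\g_k)_{p_k(\xi)}$. Within a single level $k$, the brackets vanish because both differentials lie in the abelian algebra $\z((\g_k)_{p_k(\xi)})$.

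\textbf{Step 2: the Hamiltonian flows are periodic.} The Hamiltonian vector field of $\lambda_{kj}$ at $\xi$ is $\ad^*_{x}(\xi)$ with $x=(\mathrm{d}\lambda_{kj})_\xi$. By the proposition above, $x$ is the $\mathrm{Ad}$-transport of the lattice element $\phi_{kj}$. Along the flow, $p_k(\xi)$ is fixed by $\exp(tx)$, so $x$ stays constant. The flow is therefore $t\mapsto\mathrm{Ad}^*_{\exp(-tx)}\xi$. This is $2\pi$-periodic, since $x$ lies in the integral lattice $\Lambda$ of $\z((\g_k)_{p_k(\xi)})$, and the orbit stays in $\mathcal U$.

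The only subtlety is that $x$ lies in the lattice of the quotient $\g_\xi/[\g_\xi,\g_\xi]$ rather than of $\g$ itself. This is why we use that $[G_{\xi},G_\xi]$ acts trivially on the relevant level sets (Proposition~\ref{trivial_action}), or else we pass to the effective torus quotient as in Section~\ref{Subsection: Existence}. Commuting periodic flows then assemble into an action of $\T_{\textup{big}}=\operatorname{U}(1)^c$ on $\mathcal U$ with moment map $\lambda_{\textup{big}}$. The action preserves the Poisson structure because each flow is a coadjoint action, and it preserves $\g^*_{M\textup{-reg}}$ because it commutes with $\lambda_{\textup{big}}$. Together with surjectivity of $\mathrm{d}\lambda_{\textup{big}}$, which holds by definition of $\g^*_{M\textup{-reg}}$, this gives~(1).

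\textbf{Step 3: lifting to $M$.} For~(2), collectivity gives $X_{\lambda\circ\mu}=$ the vector field generated by $\mathrm{d}\lambda_{\mu(m)}\in\g$ acting on $M$. So the flow of $\lambda_{kj}\circ\mu$ is $m\mapsto\exp(-t x(m))\cdot m$, where $x(m)=(\mathrm{d}\lambda_{kj})_{\mu(m)}$ is constant along the flow by equivariance. Periodicity then follows from Step~2 applied to stabilizers in $M$. Again Proposition~\ref{trivial_action} handles the commutator part, now for $G_k$ and $\Sigma_k$. Poisson-commutativity transfers because $\mu$ is a Poisson map.

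\textbf{Main obstacle.} The main obstacle is the periodicity and effectiveness in Steps~2--3. The lattice condition is only modulo $[\g_\xi,\g_\xi]$, so we must check that the lift of $\phi_{kj}$ to $\z(\g_\xi)$ exponentiates to a circle acting with period $2\pi$ up to the trivially acting commutator subgroup. We would try to verify this using the decomposition $G_\xi=Z(G_\xi)^\circ[G_\xi,G_\xi]$.
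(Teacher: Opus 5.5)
Steps 1 and 3 are the standard Thimm argument and are fine. This grants, as the paper does, that $\mathcal U$ is open in $\Sigma$, so $\g^*_{M\textup{-reg}}$ is an open subset of the Poisson submanifold $\Sigma$. The gap is the periodicity in Step 2, and it is exactly where the stratified setting departs from the regular setting of Guillemin--Sternberg. The paper itself offers nothing beyond that citation, and in the regular case the centralizers are tori, so the issue never arises.

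Your justification, that $x=(\mathrm{d}\lambda_{kj})_\xi$ lies in the integral lattice of $\z((\g_k)_\eta)$ with $\eta=p_k(\xi)$, is false. The element $x$ is only the central lift of a lattice vector of $(\g_k)_\eta/[(\g_k)_\eta,(\g_k)_\eta]$, so $h:=\exp(-2\pi x)$ is merely an element of $[(G_k)_\eta,(G_k)_\eta]$. For instance, if $(G_k)_\eta\cong\mathrm{U}(2)\times\mathrm{U}(1)$, the lift of the generator corresponding to $\det$ is $x=(\tfrac{i}{2}I_2,0)$, and $\exp(2\pi x)=(-I_2,1)\neq e$. The time-$2\pi$ flow is therefore $\xi\mapsto\Ad^*_h\xi$, and what you need is $h\in G_\xi$.

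For $k=0$ this is automatic, since $[G_\xi,G_\xi]\subset G_\xi$. For (2), your appeal to Proposition~\ref{trivial_action} for $G_k$ acting on $M$ is exactly right, because $\Sigma_k$ is the $M$-principal stratum of $\mu_k$. For (1) with $k\geq 1$, however, the flow lives on $\g^*_{M\textup{-reg}}$. That set is open in $\Sigma$ and generally far larger than $\mu(M)$, for example when $M$ is a single coadjoint orbit, so a statement about level sets in $M$ says nothing there.

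Your fallbacks do not close the gap either. The decomposition $G_\xi=Z(G_\xi)^\circ[G_\xi,G_\xi]$ only shows that $h$ lies in the finite group $Z((G_k)_\eta)^\circ\cap[(G_k)_\eta,(G_k)_\eta]$. Passing to an effective quotient changes the torus and rescales the moment map, so it no longer proves the proposition as stated. (Separately, the flows are not literally coadjoint actions, since the group element depends on the point. They preserve the Poisson structure simply because they are Hamiltonian flows.)

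The missing idea is to run your Step 3 argument with $M$ replaced by the coadjoint orbit $G\cdot\xi$, viewed as a connected Hamiltonian $G_k$-space with moment map $p_k|_{G\cdot\xi}$.
\begin{enumerate}[(i)]
\item Since $\mathcal U$ is open in $\Sigma\supset G\cdot\xi$, $p_k$ maps a neighbourhood of $\xi$ in $G\cdot\xi$ into $\Sigma_k$.
\item By Theorem~\ref{sympl_cross_section}, the preimage of the $(G\cdot\xi)$-principal stratum of $\g_k^*$ is dense in $G\cdot\xi$. That stratum therefore meets $\Sigma_k$, and so it equals $\Sigma_k$.
\item Proposition~\ref{trivial_action}, applied to the $G_k$-action on $G\cdot\xi$, now shows that $[(G_k)_\eta,(G_k)_\eta]$ fixes every point of $G\cdot\xi\cap p_k^{-1}(\eta)$, in particular $\xi$.
\end{enumerate}
This gives $2\pi$-periodicity of every flow on $\g^*_{M\textup{-reg}}$, and the rest of your argument for (1) then goes through.
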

	
	It follows that the pair $(\lambda_\text{big}, \g^*_{M\text{-reg}})$ is a generalized Gelfand-Cetlin datum for $M$ if and only if $c = b$,
	$\g^*_{M\text{-reg}}$ is dense in $\Sigma$
	and Definition \ref{gen_gsdatum}(v) is satisfied.
	
	%%%%%%%%%%%%%%%%%%%%%%%%%%%%%%%%%%%%%%%%%%%%%%%%%%%%%%%%%%%%%%%%%%%
	%%%%%%%%%%%%%%%%%%%%%%%  SECTION 5  %%%%%%%%%%%%%%%%%%%%%%%%%%%%%%%
	%%%%%%%%%%%%%%%%%%%%%%%%%%%%%%%%%%%%%%%%%%%%%%%%%%%%%%%%%%%%%%%%%%%
	
	\section{Abelianization theorem in a more general setting}
	We proceed with the proof of the generalized abelianization theorem from the introduction.
	In Section \ref{necessary_lemmas}, we prove some necessary results prior to the main proof in Section \ref{abelianization_proof}.
	\subsection{Torus from the \texorpdfstring{$M$}{M}-principal stratum}\label{necessary_lemmas}
	Suppose that $G$ is a compact connected Lie group with Lie algebra $\g$. Let $(\lambda_\text{big}, \g^*_{M\text{-reg}})$ be a generalized Gelfand-Cetlin datum for a non-empty, connected Hamiltonian $G$-space $(M,\omega,\mu)$. Use the notation and conventions from Section \ref{GS_data_section}.
	
	For $\xi \in \g^*_{M\text{-reg}}$, Definition \ref{gen_gsdatum}(ii) tells us that $\{(\mathrm{d} (\lambda_1\vert_\Sigma))_{\xi}, \dots, (\mathrm{d}(\lambda_\ell\vert_\Sigma))_{\xi}\}$ is a $\Z$ basis of the lattice $\Lambda_{\g_\xi/ [\g_\xi, \g_\xi]} \subset \g_\xi\big/ [\g_\xi, \g_\xi]$.
	As such, it forms a basis for $\g_\xi \big/ [\g_\xi, \g_\xi]$. 
	Using the basis, we have the linear map
	\begin{align*}
		\kappa_\xi: \g_\xi \big/ [\g_\xi, \g_\xi] & \longrightarrow \R^\ell \\  
		x_1(\mathrm{d} (\lambda_1\big\vert_\Sigma))_{\xi} + \dots + x_\ell (\mathrm{d} (\lambda_\ell\big\vert_\Sigma))_{\xi} &\longmapsto (x_1, \dots, x_\ell).
	\end{align*}
	
	\begin{proposition}\label{torus_iso}
		If $\xi \in \g^*_{M\textup{-reg}}$, then $\kappa_\xi$ integrates to a Lie group isomorphism 
		$$
		\tau_\xi: G_\xi \big/ [G_\xi, G_\xi] \longrightarrow \T_\textup{small}.
		$$
	\end{proposition}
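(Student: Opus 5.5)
The plan is to show first that $A_\xi\coloneqq G_\xi/[G_\xi,G_\xi]$ is a torus of dimension $\ell$ with Lie algebra $\g_\xi/[\g_\xi,\g_\xi]$. Then $\tau_\xi$ is obtained by descending the linear map $\kappa_\xi$ through the exponential maps of $A_\xi$ and $\T_\textup{small}$. This works because $\kappa_\xi$ carries one exponential kernel isomorphically onto the other.

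\textbf{Step 1: $A_\xi$ is a torus.} Since $G$ is compact and connected, the stabilizer $G_\xi$ of $\xi\in\g^*$ under the coadjoint action is connected; it is the centralizer of a torus. Its commutator subgroup $[G_\xi,G_\xi]$ is a closed, connected, normal (semisimple) subgroup with Lie algebra $[\g_\xi,\g_\xi]$. Hence $A_\xi$ is a compact, connected, abelian Lie group, i.e.\ a torus, with Lie algebra $\g_\xi/[\g_\xi,\g_\xi]$. Its dimension is $\dim\z(\g_\xi)=\dim\z(\g_\sigma)=\ell$, because $\xi\in\g^*_{M\textup{-reg}}\subset\Sigma$ is $G$-conjugate to a point of $\sigma$.

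\textbf{Step 2: matching the kernels.} By Section \ref{sec:Unitary_and_tori}, $\exp_{A_\xi}:\g_\xi/[\g_\xi,\g_\xi]\to A_\xi$ is a surjective homomorphism with kernel $2\pi\Lambda_{\g_\xi/[\g_\xi,\g_\xi]}$. Here I read the lattice in Definition \ref{gen_gsdatum}(ii) as the integral lattice of the quotient torus $A_\xi$.

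\emph{Caution.} This is the point to check carefully. Under the identification $\z(\g_\xi)\cong\g_\xi/[\g_\xi,\g_\xi]$, the integral lattice of $Z(G_\xi)^\circ$ is in general only a finite-index sublattice of that of $A_\xi$, because $Z(G_\xi)^\circ\to A_\xi$ is a finite cover. The statement needs the quotient-torus lattice; with the other reading, $\tau_\xi$ would only be a covering map.

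Next, identify $\mathrm{Lie}(\T_\textup{small})=(i\R)^\ell$ with $\R^\ell=\R_\textup{small}$ via $ix\mapsto x$. The exponential map of $\T_\textup{small}$ is then $x\mapsto(e^{ix_1},\dots,e^{ix_\ell})$, whose kernel is $2\pi\Z^\ell$. By Definition \ref{gen_gsdatum}(ii), the elements $(\mathrm{d}(\lambda_j|_\Sigma))_\xi$ form a $\Z$-basis of $\Lambda_{\g_\xi/[\g_\xi,\g_\xi]}$. Since $\kappa_\xi$ sends this basis to the standard basis, it is a linear isomorphism that maps $2\pi\Lambda_{\g_\xi/[\g_\xi,\g_\xi]}$ bijectively onto $2\pi\Z^\ell$.

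\textbf{Step 3: descending $\kappa_\xi$.} Define $\tau_\xi(\exp_{A_\xi}(y))\coloneqq\exp_{\T_\textup{small}}(\kappa_\xi(y))$.
\begin{itemize}
\item It is well defined: if $\exp_{A_\xi}(y)=\exp_{A_\xi}(y')$, then $y-y'$ lies in the kernel, so $\kappa_\xi(y-y')\in 2\pi\Z^\ell$.
\item It is a homomorphism, because both exponential maps are homomorphisms on these abelian groups.
\item It is smooth, since $\exp_{A_\xi}$ is a surjective local diffeomorphism (a covering), and its differential at the identity is $\kappa_\xi$.
\item It is injective by the reverse inclusion of kernels, and surjective because $\kappa_\xi$ and $\exp_{\T_\textup{small}}$ are surjective.
\end{itemize}
Therefore $\tau_\xi$ is a Lie group isomorphism integrating $\kappa_\xi$. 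It is unique, since $A_\xi$ is connected.

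The only real obstacle is the lattice convention flagged in Step 2; everything else is standard structure theory of compact groups.
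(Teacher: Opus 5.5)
Your proof is correct and is essentially the paper's argument. The paper likewise notes that $2\pi$ times the basis $\{(\mathrm{d}(\lambda_j|_\Sigma))_\xi\}$ is a $\Z$-basis of $\ker(\exp:\g_\xi/[\g_\xi,\g_\xi]\to G_\xi/[G_\xi,G_\xi])$, that $\kappa_\xi$ carries this kernel isomorphically onto $\ker(\exp:\R^\ell\to\T_\textup{small})$, and hence that $\kappa_\xi$ descends to the unique Lie group isomorphism making the exponential square commute. Your Step 1, your checks of well-definedness, smoothness and bijectivity, and your lattice caution fill in points the paper leaves implicit; the caution matches the paper's own proof, which uses the integral lattice of the quotient torus $G_\xi/[G_\xi,G_\xi]$ rather than that of $Z(G_\xi)^\circ$.
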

	\begin{proof}
		Since $\{(\mathrm{d} (\lambda_1\vert_\Sigma))_{\xi}, \dots, (\mathrm{d}(\lambda_\ell\vert_\Sigma))_{\xi}\}$ is a $\Z$-basis of the lattice $\Lambda_{\g_\xi/[\g_\xi, \g_\xi]}$, $\{2\pi i(\mathrm{d} (\lambda_1\vert_\Sigma))_{\xi}, \dots, 2\pi i(\mathrm{d}(\lambda_\ell\vert_\Sigma))_{\xi}\}$ is a $\Z$-basis of $\ker(\exp: \g_\xi / [\g_\xi, \g_\xi] \longrightarrow G_\xi / [G_\xi, G_\xi])$.
		We also have
		$$\kappa_\xi(2\pi k (\mathrm{d} (\lambda_i\vert_\Sigma))_{\xi}) = (0, \dots,0, 2\pi k,0,\dots 0) \in \ker(\exp:\R^\ell \longrightarrow \T^\ell)$$
		for all $i \in \{1, \dots, \ell \}$.
		It follows that $\kappa_\xi$ restricts to a $\Z$-module isomorphism from $\ker(\exp: \g_\xi / [\g_\xi, \g_\xi] \longrightarrow G_\xi / [G_\xi, G_\xi])$ to $\ker(\exp:\R^\ell \longrightarrow \T^\ell)$.
		This implies the existence of a unique Lie group isomorphism $\tau_\xi:G_\xi/[G_\xi, G_\xi] \longrightarrow \T_\text{small}$ such that 
		\begin{center}
			\begin{tikzcd}
				G_\xi/ [G_\xi, G_\xi] \arrow[r, "\tau_\xi"] & \mathbb{T}_\text{small}  \\
				\g_\xi/ [\g_\xi, \g_\xi] \arrow[r, "\kappa_\xi"] \arrow[u, "\exp"]& \mathbb{R}^\ell \arrow[u, "\exp", swap]
			\end{tikzcd}
		\end{center}
		commutes.
	\end{proof}
	Before stating the next proposition, we recall some pertinent facts. 
	The first is that $[G_\xi, G_\xi]$ acts trivially on $\mu^{-1}(\xi)$ for all $\xi \in \Sigma$ where $\Sigma \subset \g^*$ is the $M$-principal stratum; see Proposition \ref{trivial_action}. 
	Another is that $\T_\text{big}$ acts on $M_\text{reg}$; see Definition \ref{gen_gsdatum}.
	\begin{proposition}\label{torus_iso_equiv}
		Suppose that $\xi \in \g^*_{M\textup{-reg}}$.
		Then $g \cdot m = \tau_\xi(g) \cdot m$ for all $g \in G_\xi \big/ [G_\xi, G_\xi]$ and $m \in \mu^{-1}(\xi)$,
		where the left-hand side denotes the action of $G_\xi \big/ [G_\xi, G_\xi]$ induced by the action of $G$ and the right-hand side denotes the action of $\T_\textup{small} \subset \T_\textup{big}$.
	\end{proposition}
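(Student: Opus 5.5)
The plan is to compare the two actions infinitesimally along one-parameter subgroups, and then use that both are actions of tori. Fix $\xi\in\g^*_{M\text{-reg}}$ and $m\in\mu^{-1}(\xi)$. For $k\in\{1,\dots,\ell\}$ write $x_k(\eta)\coloneqq(\mathrm{d}(\lambda_k\vert_\Sigma))_\eta\in\z(\g_\eta)$ for $\eta\in\Sigma$. By construction, the $k$-th circle factor of $\T_{\text{small}}\subset\T_{\text{big}}$ acts on $M_{\text{reg}}$ through the Hamiltonian flow of $\lambda_k\circ\mu$ (Definition \ref{gen_gsdatum}(vi)). On the other side, Proposition \ref{torus_iso} shows that $\tau_\xi$ sends $\exp(t\,x_k(\xi))$ to the one-parameter subgroup of $\T_{\text{small}}$ through the $k$-th coordinate direction. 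Here $\exp(t\,x_k(\xi))$ is understood modulo $[G_\xi,G_\xi]$, which acts trivially on $\mu^{-1}(\xi)$ by Proposition \ref{trivial_action}, so the left-hand action is well defined. It therefore suffices to prove, for each $k$ and all $t$, that
\[
\exp(t\,x_k(\xi))\cdot m \;=\; \text{(time-$t$ Hamiltonian flow of } \lambda_k\circ\mu)(m),
\]
with the paper's conventions for $x_M$ and the normalization $\Lambda=\frac{1}{2\pi}\ker\exp$ absorbing the sign and the $2\pi$. Indeed, $G_\xi/[G_\xi,G_\xi]$ is a torus generated by these one-parameter subgroups, both sides are group actions, and $\tau_\xi$ is a homomorphism.

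\textbf{Step 1: the differential of $\lambda_k\circ\mu$.} Since $M_{\text{reg}}=\mu^{-1}(\g^*_{M\text{-reg}})$ and $\g^*_{M\text{-reg}}\subset\Sigma$, the map $\mu$ sends a neighbourhood of $m$ in $M_{\text{reg}}$ into $\Sigma$. Hence $\mathrm{d}\mu_p$ takes values in $T_{\mu(p)}\Sigma$ for $p$ near $m$. By the chain rule,
\[
\mathrm{d}(\lambda_k\circ\mu)_p=(\mathrm{d}(\lambda_k\vert_\Sigma))_{\mu(p)}\circ\mathrm{d}\mu_p=\langle \mathrm{d}\mu_p(-),\,x_k(\mu(p))\rangle = \mathrm{d}\mu^{x_k(\mu(p))}_p .
\]
Because $X_{\mu^x}=x_M$, this gives the collective Hamiltonian formula $X_{\lambda_k\circ\mu}(p)=(x_k(\mu(p)))_M(p)$.

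\textbf{Step 2: the flow stays in $\mu^{-1}(\xi)$.} By equivariance, $\mathrm{d}\mu_p\big((x)_M(p)\big)$ is, up to sign, $\mathrm{ad}^*_x\mu(p)$. With $x=x_k(\mu(p))\in\g_{\mu(p)}$ this vanishes. So $\mu$ is constant along integral curves of $X_{\lambda_k\circ\mu}$; equivalently, $\lambda_k$ is $G$-invariant and hence Poisson-commutes with every coordinate function on $\g^*$. Along the integral curve through $m$, the vector field therefore equals the fixed fundamental vector field $(x_k(\xi))_M$. Uniqueness of integral curves then identifies the flow with $t\mapsto\exp(\mp t\,x_k(\xi))\cdot m$, with the sign dictated by the convention $(x_M)_p=\frac{d}{dt}\vert_{t=0}\exp(-tx)\cdot p$.

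\textbf{Step 3: matching normalizations.} Combine Step 2 with the commutative square in the proof of Proposition \ref{torus_iso}. That square says $\tau_\xi\circ\exp=\exp\circ\kappa_\xi$, with $\kappa_\xi(x_k(\xi))=e_k$. This yields the displayed identity for all $t$, and the proposition follows.

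I expect the main obstacle to be Step 1. Its justification rests on $\mu$ mapping $M_{\text{reg}}$ into $\Sigma$, which holds by definition, and on $\lambda_k$ being merely smooth on $\Sigma$ rather than on an open subset of $\g^*$. One must therefore be careful to use only $\mathrm{d}(\lambda_k\vert_\Sigma)$ paired against $\mathrm{d}\mu_p$, which lands in $T\Sigma$. A secondary annoyance is tracking the sign and $2\pi$ conventions so that the generator of the $k$-th factor of $\T_{\text{small}}$ really corresponds to $x_k(\xi)$ and not to $-x_k(\xi)$. If a sign mismatch appears, I would absorb it into the identification $\R_{\text{small}}=\mathrm{Lie}(\T_{\text{small}})$ used in Definition \ref{gen_gsdatum}(vi).
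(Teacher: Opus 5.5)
Your argument is correct and is essentially the proof the paper intends: the paper simply defers to the collective-Hamiltonian argument of \cite[Proposition 8]{crooks2024Gelfand}, and you carry it out with the two adaptations it requires, namely pairing only $\mathrm{d}(\lambda_k\vert_\Sigma)$ against $\mathrm{d}\mu_p$ (which lands in $T\Sigma$), and using Proposition \ref{trivial_action} to make the $G_\xi/[G_\xi,G_\xi]$-action well defined. In Step 3 no sign mismatch actually arises, because both the $G$-action and the $\T_{\text{big}}$-action are tied to their moment maps by the same convention $X_{\mu^x}=x_M$; you therefore do not need the fallback of re-identifying $\R_{\text{small}}$, which the definition would not permit anyway.
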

	\begin{proof}
		One can use the proof from of \cite[Proposition 8]{crooks2024Gelfand} to get the same result, accounting for the action of the commutator subgroup being trivial.
	\end{proof}
	
	\begin{corollary}\label{rho_lie_group_iso}
		The multiplication map
		\begin{align*}
			\rho_\xi: G_\xi \big/ [G_\xi, G_\xi] \times \T_\textup{int} &\longrightarrow \T_\textup{big}\\
			([g],t) &\longmapsto \tau_\xi([g])t
		\end{align*}
		is a Lie group isomorphism satisfying $([g], t) \cdot m = \rho_\xi([g], t)\cdot m$, for all $m \in M_\textup{reg}$ and all $([g], t) \in G_\xi/ [G_\xi, G_\xi] \times \T_\textup{int}$.
	\end{corollary}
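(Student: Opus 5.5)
Write $\T_\text{big}=\T_\text{small}\times\T_\text{int}$, where $\T_\text{small}$ is the first $\ell$ factors of $\operatorname{U}(1)^b$ and $\T_\text{int}$ the remaining $u$ factors. Under this splitting, $\rho_\xi([g],t)=(\tau_\xi([g]),t)$. So I would prove the corollary by identifying $\rho_\xi$ with $\tau_\xi\times\mathrm{id}_{\T_\text{int}}$ and then applying Proposition \ref{torus_iso_equiv} one factor at a time.

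\emph{Group-theoretic part.} Since $\T_\text{big}$ is abelian, $\rho_\xi$ is the product of the two homomorphisms $\tau_\xi\circ\mathrm{pr}_1$ and $\mathrm{pr}_2$ (the latter followed by $\T_\text{int}\hookrightarrow\T_\text{big}$). Hence $\rho_\xi$ is a smooth group homomorphism. In the coordinates $\T_\text{big}=\T_\text{small}\times\T_\text{int}$ it equals $\tau_\xi\times\mathrm{id}_{\T_\text{int}}$. By Proposition \ref{torus_iso}, $\tau_\xi$ is a Lie group isomorphism, so $\rho_\xi$ is one as well, with inverse $\tau_\xi^{-1}\times\mathrm{id}$.

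\emph{Action compatibility.} First I would make precise what the left-hand side $([g],t)\cdot m$ means. The group $G_\xi/[G_\xi,G_\xi]$ acts on $\mu^{-1}(\xi)$ via Proposition \ref{trivial_action}, and $\T_\text{int}\subset\T_\text{big}$ acts on $M_\text{reg}$ via Definition \ref{gen_gsdatum}(vi). A point of care is that $\T_\text{int}$ need not preserve $\mu^{-1}(\xi)$, since it moves $\mu(m)$ along the fibre of $\lambda_\text{big}$. I would therefore take $m\in\mu^{-1}(\xi)$ and define
\[
([g],t)\cdot m\coloneqq t\cdot([g]\cdot m).
\]
This is the only reading in which both factors act. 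The first step then uses Proposition \ref{torus_iso_equiv}, which gives $[g]\cdot m=\tau_\xi([g])\cdot m$ with the right-hand side computed in $\T_\text{big}$. Since $\T_\text{big}$ acts on $M_\text{reg}$ by a group action, the compatibility follows:
\[
t\cdot([g]\cdot m)=t\cdot(\tau_\xi([g])\cdot m)=(t\,\tau_\xi([g]))\cdot m=\rho_\xi([g],t)\cdot m .
\]

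\emph{Main obstacle.} The only delicate point is the one above: interpreting the mixed action so that it is well defined. Once the order "first $G_\xi/[G_\xi,G_\xi]$, then $\T_\text{int}$" is fixed, commutativity of $\T_\text{big}$ shows the order is immaterial on the $\T_\text{big}$ side. The rest is the formal consequence of Propositions \ref{torus_iso} and \ref{torus_iso_equiv} described above.
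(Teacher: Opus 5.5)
Your proposal is correct and is essentially the argument the paper leaves implicit. Under $\T_\text{big}=\T_\text{small}\times\T_\text{int}$ you identify $\rho_\xi$ with $\tau_\xi\times\mathrm{id}_{\T_\text{int}}$, so Proposition \ref{torus_iso} gives the isomorphism, and Proposition \ref{torus_iso_equiv} together with commutativity of $\T_\text{big}$ gives the compatibility; your restriction to $m\in\mu^{-1}(\xi)$ with $([g],t)\cdot m\coloneqq t\cdot([g]\cdot m)$ is the right way to read the paper's loosely stated ``for all $m\in M_\text{reg}$'', and it is exactly the form the paper later uses in the $\delta_\xi$-equivariance.
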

	
	We now consider the action of $\T_\text{int} \subset \T_\text{big}$ on $M_\text{reg}$.
	We also encourage the reader to review the notation of Definition \ref{gen_gsdatum} prior to reading the following.
	
	\begin{proposition}\label{homeomorphism}
		For $\xi \in \g_{M\textup{-reg}}^*$, the map 
		\begin{align*}
			\delta_\xi: \mu^{-1}(\xi) \times \T_\textup{int} &\longrightarrow \lambda_M^{-1}(\lambda_\textup{big}(\xi)) \\
			(m, t) &\longmapsto t \cdot m
		\end{align*}
		is a homeomorphism.
	\end{proposition}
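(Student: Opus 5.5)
We show in turn that $\delta_\xi$ is well defined, injective, surjective, and a homeomorphism, following the strategy of the analogous statement in \cite{crooks2024Gelfand}. Write $\lambda_M = \lambda_\textup{big}\circ\mu$ on $M_\textup{reg}$.

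\emph{Well defined.} Let $m\in\mu^{-1}(\xi)$ and $t\in\T_\textup{int}$. Then $m\in M_\textup{reg}$, and since $\lambda_M$ is a moment map for the abelian group $\T_\textup{big}$ (Definition \ref{gen_gsdatum}(vi)), it is $\T_\textup{big}$-invariant. Hence $\lambda_M(t\cdot m)=\lambda_M(m)=\lambda_\textup{big}(\xi)$, so $t\cdot m$ lies in the stated fibre. Continuity of $\delta_\xi$ is clear.

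\emph{Intertwining property.} The key input is that $\mu$ intertwines the $\T_\textup{big}$-action on $M_\textup{reg}$ with the Poisson Hamiltonian $\T_\textup{big}$-action on $\g^*_{M\textup{-reg}}$ from Definition \ref{gen_gsdatum}(iv). Both actions are generated by the Hamiltonian vector fields of the components of $\lambda_\textup{big}$, resp. of $\lambda_\textup{big}\circ\mu$. Since $\mu$ is a Poisson map, $\mu_*X_{\lambda_j\circ\mu}=X_{\lambda_j}\circ\mu$. Integrating flows, and using that $\T_\textup{big}$ is connected, gives $\mu(t\cdot m)=t\cdot\mu(m)$. This is the step I would check most carefully, since the induced action on $M_\textup{reg}$ must actually be the one generated by these functions; completeness of the flows is automatic because both spaces carry genuine torus actions.

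\emph{Injectivity.} By Definition \ref{gen_gsdatum}(v), $\T_\textup{int}$ acts freely on $\g^*_{M\textup{-reg}}$, and its orbits are exactly the fibres of $\lambda_\textup{big}\vert_{\g^*_{M\textup{-reg}}}$. Suppose $t\cdot m=t'\cdot m'$ with $m,m'\in\mu^{-1}(\xi)$. Applying $\mu$ gives $t\cdot\xi=t'\cdot\xi$, so freeness forces $t=t'$, and then $m=m'$.

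\emph{Surjectivity.} Let $p\in\lambda_M^{-1}(\lambda_\textup{big}(\xi))$. Then $\mu(p)\in\g^*_{M\textup{-reg}}$ lies in the same $\lambda_\textup{big}$-fibre as $\xi$. Since $\g^*_{M\textup{-reg}}$ is a union of fibres forming a principal $\T_\textup{int}$-bundle, there is a unique $t$ with $\mu(p)=t\cdot\xi$. Then $m:=t^{-1}\cdot p$ satisfies $\mu(m)=\xi$, and $\delta_\xi(m,t)=p$.

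\emph{Continuity of the inverse.} The inverse is $p\mapsto (s(p)^{-1}\cdot p,\ s(p))$, where $s(p)\in\T_\textup{int}$ is the unique element with $\mu(p)=s(p)\cdot\xi$. Restricted to the fibre $F=\lambda_\textup{big}^{-1}(\lambda_\textup{big}(\xi))\cap\g^*_{M\textup{-reg}}$, the orbit map $\T_\textup{int}\to F$, $t\mapsto t\cdot\xi$, is a continuous bijection from a compact space to a Hausdorff space. It is therefore a homeomorphism, and its inverse $F\to\T_\textup{int}$ is continuous. The function $s$ is the composite of this inverse with $\mu$, so $s$ is continuous. Consequently $\delta_\xi^{-1}$ is continuous, and $\delta_\xi$ is a homeomorphism.
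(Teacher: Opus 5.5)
Your proof is correct and follows essentially the paper's route: surjectivity comes from the principal $\T_\textup{int}$-bundle structure in Definition \ref{gen_gsdatum}(v) together with $\T_\textup{int}$-equivariance of $\mu$ on $M_\textup{reg}$, and injectivity comes from freeness of the $\T_\textup{int}$-action on $\g^*_{M\textup{-reg}}$. You also supply two points the paper leaves implicit: a justification, via intertwining of Hamiltonian vector fields, of the equivariance of $\mu$ that the paper only asserts, and a compact-to-Hausdorff argument for continuity of $\delta_\xi^{-1}$, which the paper's ``it suffices'' reduction does not address.
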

	\begin{proof}
		It suffices to prove that the following conditions are met for $\xi \in \g^*_{M\text{-reg}}$:
		\begin{enumerate}
			\item the saturation of $\mu^{-1}(\xi)$ under the action of $\T_\textup{int}$ is $\lambda_M^{-1}(\lambda_\textup{big}(\xi))$;
			\item for $m \in \mu^{-1}(\xi)$, if $t \cdot m \in \mu^{-1}(\xi)$ for $t \in \T_\text{int}$, then $t = e$.
		\end{enumerate}
		
		To verify $(1)$, we note $\lambda_M^{-1}(\lambda_\text{big}(\xi))$ is a $\T_\text{int}$-invariant subset of $M_\text{reg}$ containing $\mu^{-1}(\xi)$.
		It follows that $\T_\text{int}\cdot \mu^{-1}(\xi) \subseteq \lambda_M^{-1}(\lambda_\text{big}(\xi))$.
		For the other inclusion, let $m \in \lambda_M^{-1}(\lambda_\text{big}(\xi))$. 
		Definition \ref{gen_gsdatum}(v) implies that there exists $t \in \T_\text{int}$ such that $t \cdot \mu(m)=\xi$.
		The moment map $\mu$ is $\T_\text{int}$-equivariant when restricted to a map $M_\text{reg} \longrightarrow \g_{M\text{-reg}}^*$.
		Hence 
		$$
		\xi = t \cdot \mu(m) = \mu(t\cdot m) \implies t \cdot m \in \mu^{-1}(\xi).
		$$
		This establishes the other inclusion.
		
		To verify (2), let $m \in \mu^{-1}(\xi)$ and let $t \in \T_\text{int}$ be such that $t \cdot m \in \mu^{-1}(\xi)$.
		By the equivariance mentioned above, this implies that 
		\[
		\xi = \mu(t \cdot m) = t \cdot \mu(m) = t \cdot \xi \implies t \in {(\T_\text{int})}_\xi.
		\]
		Definition \ref{gen_gsdatum}(v) now implies that $t = e$.
	\end{proof}
	
	\subsection{Proof of the abelianization theorem}\label{abelianization_proof}
	We continue with the proof of the generalized abelianization theorem, proceeding to show that all the conditions of Definition \ref{sss_isomorph} are satisfied.
	Retain the notation and conventions of Section \ref{necessary_lemmas}, and suppose that $\xi \in \g^*_{M\text{-reg}}$.
	
	Write $\text{Stab}(G,\xi)$ (resp. $\text{Stab}(\T_\text{big}, \lambda_\text{big}(\xi))$) for the set of $G_\xi$-conjugacy classes of $G_\xi$-stabilizers of points in $\mu^{-1}(\xi)$ (resp. set of $\T_\text{big}$-stabilizers of points in $\lambda^{-1}_M(\lambda_\text{big}(\xi))$).
	By Proposition \ref{trivial_action}, each such $G_\xi$-stabilizer contains $[G_\xi, G_\xi]$.
	There is a related comment to be made about the $\T_\text{big}$-action on $\lambda_M^{-1}(\lambda_\text{big}(\xi))$.
	Since $\T_\text{int}$ acts freely on $\lambda_M^{-1}(\lambda_\text{big}(\xi))$, we have that stabilizer subgroups of $\T_\text{big}$ must be of the form $H \times \{e\} \subseteq \T_\text{small} \times \T_\text{int}$ for closed subgroups $H \subset \T_\text{small}$.
	
	Using Definition \ref{sss_isomorph} as a guide, we now construct a bijection 
	$$
	\text{Stab}(G, \xi) \longrightarrow \text{Stab}(\T_\text{big}, \lambda_\text{big}(\xi)).
	$$
	
	The following is a preliminary step in this direction.
	\begin{lemma}\label{normalization_lemma}
		If $p \in \mu^{-1}(\xi)$, then $G_\xi$ normalizes $G_p$.
	\end{lemma}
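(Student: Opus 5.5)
The plan is to show that $G_p$ is sandwiched between $[G_\xi,G_\xi]$ and $G_\xi$, and then use that any such subgroup is normal in $G_\xi$. First, by equivariance of $\mu$, if $g\in G$ fixes $p$ then $\mathrm{Ad}^*_g(\xi)=\mathrm{Ad}^*_g(\mu(p))=\mu(g\cdot p)=\mu(p)=\xi$. Hence $G_p\subseteq G_\xi$, so the $G$-stabilizer and the $G_\xi$-stabilizer $(G_\xi)_p$ of $p$ coincide. Second, since $\xi\in\g^*_{M\text{-reg}}\subseteq\Sigma$ by Definition \ref{gen_gsdatum}(iii), Proposition \ref{trivial_action} shows that $[G_\xi,G_\xi]$ acts trivially on $\mu^{-1}(\xi)$. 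In particular $[G_\xi,G_\xi]\subseteq G_p$.

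It then remains to show that any subgroup $H$ with $[G_\xi,G_\xi]\subseteq H\subseteq G_\xi$ is normal in $G_\xi$. Take $g\in G_\xi$ and $h\in H$. Then
$$ghg^{-1}=(ghg^{-1}h^{-1})\,h,$$
and the factor $ghg^{-1}h^{-1}$ is a commutator of elements of $G_\xi$, so it lies in $[G_\xi,G_\xi]\subseteq H$. Therefore $ghg^{-1}\in H$, so $gHg^{-1}\subseteq H$ for all $g\in G_\xi$, and $H$ is normal. Applying this with $H=G_p$ gives the lemma.

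Equivalently, $G_p$ is the preimage of a subgroup of the abelian group $G_\xi/[G_\xi,G_\xi]$, and every subgroup of an abelian group is normal. There is no substantial obstacle here. The only point needing care is to confirm that Proposition \ref{trivial_action} applies, which requires $\xi\in\Sigma$; this holds by the containment $\g^*_{M\text{-reg}}\subseteq\Sigma$ noted above.
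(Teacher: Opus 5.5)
Your proof is correct and takes essentially the paper's approach: both place $G_p$ between $[G_\xi,G_\xi]$ and $G_\xi$, using equivariance of $\mu$ and Proposition \ref{trivial_action}, and then use that $G_\xi/[G_\xi,G_\xi]$ is abelian. The paper compares $G_p/[G_\xi,G_\xi]$ with its conjugate inside the abelian quotient. Your identity $ghg^{-1}=(ghg^{-1}h^{-1})h$ does the same work directly, and it makes explicit the containments $[G_\xi,G_\xi]\subseteq G_p\subseteq G_\xi$ that the paper uses without stating them.
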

	\begin{proof}
		Suppose that $g \in G_\xi$.  
		Consider the subgroups $G_p/ [G_\xi, G_\xi]$ and $(gG_pg^{-1})/[G_\xi, G_\xi]$ of $G_\xi/[G_\xi, G_\xi]$.
		It follows that 
		$$
		[g](G_p \big/ [G_\xi, G_\xi]) [g]^{-1} = (gG_pg^{-1})/[G_\xi, G_\xi].
		$$
		As $G_\xi/[G_\xi, G_\xi]$ is abelian, we have
		$$
		G_p/[G_\xi, G_\xi] = (gG_pg^{-1})/[G_\xi, G_\xi].
		$$
		This immediately implies that $G_p = gG_pg^{-1}$.
	\end{proof}
	One immediate consequence is that $\text{Stab}(G, \xi)$ is the set of $G_\xi$-stabilizers of points in $\mu^{-1}(\xi)$.
	On the other hand, recall the Lie group isomorphism $\tau_\xi:G_\xi/[G_\xi, G_\xi] \longrightarrow \T_\text{small}$ from Proposition \ref{torus_iso}.
	\begin{proposition}\label{stabilizer_bijection}
		The map
		\begin{align*}
			\textup{Stab}(G, \xi) &\longrightarrow \textup{Stab}(\T_\textup{big}, \lambda_\textup{big}(\xi))\\
			H &\longmapsto \tau_\xi\left(H\big/ [G_\xi, G_\xi]\right) \times \{e\}
		\end{align*}
		is a bijection.
	\end{proposition}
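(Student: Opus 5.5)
The plan is to show that the map is well defined, injective and surjective, and to reduce all three to a single stabilizer identity: for every $p\in\mu^{-1}(\xi)$,
$$(\T_\text{big})_p=\tau_\xi\bigl(G_p/[G_\xi,G_\xi]\bigr)\times\{e\}.$$
Two facts stated just before the proposition make the quotient meaningful. Lemma \ref{normalization_lemma} says $\text{Stab}(G,\xi)$ is literally the set of stabilizers $G_p$ for $p\in\mu^{-1}(\xi)$, with no conjugacy ambiguity. Proposition \ref{trivial_action} says every $G_p$ contains $[G_\xi,G_\xi]$. Together these show that $H/[G_\xi,G_\xi]$ is a closed subgroup of $G_\xi/[G_\xi,G_\xi]$ for each $H\in\text{Stab}(G,\xi)$.

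To prove the identity, fix $p\in\mu^{-1}(\xi)$ and use the isomorphism $\rho_\xi$ of Corollary \ref{rho_lie_group_iso}.
\begin{enumerate}[(a)]
\item Each $\T_\text{big}$-stabilizer of a point in $\lambda_M^{-1}(\lambda_\text{big}(\xi))$ has the form $K\times\{e\}$. This was noted above and follows from the freeness part (2) in the proof of Proposition \ref{homeomorphism}.
\item For $s\in\T_\text{small}$, Proposition \ref{torus_iso_equiv} gives $s\cdot p=\tau_\xi^{-1}(s)\cdot p$, where the right side uses the induced action of $G_\xi/[G_\xi,G_\xi]$.
\item Hence $s$ fixes $p$ if and only if $\tau_\xi^{-1}(s)\in G_p/[G_\xi,G_\xi]$, which gives the identity.
\end{enumerate}
It follows that the map sends $G_p$ to $(\T_\text{big})_p$, so it lands in $\text{Stab}(\T_\text{big},\lambda_\text{big}(\xi))$ and is well defined.

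Injectivity follows because $\tau_\xi$ is a group isomorphism and subgroups containing $[G_\xi,G_\xi]$ are determined by their images in $G_\xi/[G_\xi,G_\xi]$. Concretely, $G_p$ is the full preimage of $G_p/[G_\xi,G_\xi]$, since it contains the kernel $[G_\xi,G_\xi]$.

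For surjectivity, take any $q\in\lambda_M^{-1}(\lambda_\text{big}(\xi))$. By Proposition \ref{homeomorphism}, $q=t\cdot p$ for some $p\in\mu^{-1}(\xi)$ and $t\in\T_\text{int}$. Since $\T_\text{big}$ is abelian, $(\T_\text{big})_q=(\T_\text{big})_p$, which is the image of $G_p$ by the identity.

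The main obstacle is step (b). It requires Proposition \ref{torus_iso_equiv} in exactly the form $[g]\cdot p=\tau_\xi([g])\cdot p$ for all $[g]$, not merely infinitesimally. I would rely on that proposition as stated: the actions of the connected torus $G_\xi/[G_\xi,G_\xi]$ and of $\T_\text{small}$ on $\mu^{-1}(\xi)$ agree via $\tau_\xi$. Given this, checking the stabilizer equality is routine.
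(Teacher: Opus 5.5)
Your proof is correct and follows the paper's route. The paper also factors the map through the set of $G_\xi/[G_\xi,G_\xi]$-stabilizers and relies on Propositions \ref{torus_iso} and \ref{torus_iso_equiv}, and you additionally spell out the well-definedness and the surjectivity via Proposition \ref{homeomorphism}, which the paper leaves implicit. One small refinement: step (a) needs (b) first, because freeness of $\T_\textup{int}$ alone would not exclude stabilizers that are graphs of homomorphisms from a subgroup of $\T_\textup{small}$ into $\T_\textup{int}$. The right deduction is that $(s,t)\cdot p=p$ gives $t\cdot(s\cdot p)=p$ with $s\cdot p\in\mu^{-1}(\xi)$ by (b), and then $t=e$ by part (2) of Proposition \ref{homeomorphism}.
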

	\begin{proof}
		Consider the induced action of $G_\xi/ [G_\xi, G_\xi]$ on $\mu^{-1}(\xi)$. 
		Let $\text{Stab}(G_\xi/[G_\xi, G_\xi], \xi)$ denote the set of $G_\xi/[G_\xi, G_\xi]$-stabilizers of points in $\mu^{-1}(\xi)$.
		The map
		\begin{align*}
			\textup{Stab}(G, \xi) &\longrightarrow \textup{Stab}(G_\xi/[G_\xi, G_\xi], \xi)\\
			H &\longmapsto H\big/ [G_\xi, G_\xi]
		\end{align*}
		is evidently a bijection.
		It remains only to observe that the map
		\begin{align*}
			\text{Stab}(G_\xi\big/ [G_\xi, G_\xi], \xi) &\longrightarrow  \text{Stab}(\T_\text{big}, \mu^{-1}(\xi) \times \T_\text{int})\\
			K &\longmapsto \tau_\xi(K) \times \{e\}
		\end{align*}
		is a bijection, as follows from Propositions \ref{torus_iso} and \ref{torus_iso_equiv}.
	\end{proof}
	
	Note that we may decompose both $\mu^{-1}(\xi) \times \T_\text{int}$ and $\lambda_M^{-1}(\lambda_\text{big}(\xi))$ using the orbit-type decomposition.
	Let $H \in \text{Stab}(G,\xi)$.
	By the proof of Proposition \ref{stabilizer_bijection}, for any $H \in \text{Stab}(G, \xi)$, 
	$$
	\mu^{-1}(\xi)_{(H)} = \mu^{-1}(\xi)_{H/[G_\xi, G_\xi]}.
	$$
	It follows that
	\begin{align*}
		\mu^{-1}(\xi)_{(H)} \times \T_\text{int} 
		&= \mu^{-1}(\xi)_{H/[G_\xi, G_\xi]} \times \T_\text{int}\\
		&= \big(\mu^{-1}(\xi) \times \T_\text{int}\big)_{H/[G_\xi, G_\xi] \times \{e\}}.
	\end{align*}
	Proposition \ref{torus_iso_equiv} implies that
	$$
	\delta_\xi(([g],t)\cdot x) = \rho_\xi([g],t) \cdot \delta_\xi(x)
	$$
	for all $([g],t) \in G_\xi/[G_\xi, G_\xi] \times \T_\text{int}$ and $x \in \mu^{-1}(\xi) \times \T_\text{int}$.
	The map $\delta_{\xi}$ in Proposition \ref{homeomorphism} then satisfies
	\begin{align*}
		\delta_\xi\big((\mu^{-1}(\xi)\times \T_\text{int})_{H/[G_\xi, G_\xi] \times \{e\}} \big) 
		&= (\lambda_M^{-1}(\lambda_\text{big}(\xi)))_{\rho_\xi(H/[G_\xi, G_\xi] \times \{e\})}\\
		&= (\lambda_M^{-1}(\lambda_\text{big}(\xi)))_{\tau_\xi(H/ [G_\xi, G_\xi])\times \{ e\}}.
	\end{align*}
	
	This proves the following proposition.
	
	\begin{proposition}\label{stratum_diffeo}
		If $H \in \textup{Stab}(G, \xi)$, then $\delta_\xi$ restricts to a diffeomorphism
		\[
		\mu^{-1}(\xi)_{(H)} \times \T_\textup{int} \longrightarrow \lambda_M^{-1}(\lambda_\textup{big}(\xi))_{\tau_\xi(H/[G_\xi, G_\xi]) \times \{e\}}.
		\]
	\end{proposition}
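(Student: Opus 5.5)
The argument in the preceding paragraphs already gives the set-theoretic statement: $\delta_\xi$ maps $(\mu^{-1}(\xi)\times\T_\textup{int})_{H/[G_\xi,G_\xi]\times\{e\}}=\mu^{-1}(\xi)_{(H)}\times\T_\textup{int}$ bijectively onto $\lambda_M^{-1}(\lambda_\textup{big}(\xi))_{\tau_\xi(H/[G_\xi,G_\xi])\times\{e\}}$. Here we use the equivariance $\delta_\xi(([g],t)\cdot x)=\rho_\xi([g],t)\cdot\delta_\xi(x)$ and the fact that $\rho_\xi$ is a group isomorphism (Corollary \ref{rho_lie_group_iso}). Since $\delta_\xi$ is a homeomorphism by Proposition \ref{homeomorphism}, its restriction is a homeomorphism between these subsets. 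What remains is smoothness of the restriction and of its inverse.

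\textbf{Manifold structures.} By Theorem \ref{thrm:Sjamaar-Lerman}(i), each connected component of $\mu^{-1}(\xi)_{(H)}$ is a submanifold of $M$. Hence the source is a manifold. For the target, I would note that $\lambda_M$ is the moment map of the Hamiltonian $\T_\textup{big}$-action on $M_\textup{reg}$ (Definition \ref{gen_gsdatum}(vi)). So Sjamaar--Lerman, applied to the torus $\T_\textup{big}$ at level $\lambda_\textup{big}(\xi)$, shows that the components of the orbit-type pieces $\lambda_M^{-1}(\lambda_\textup{big}(\xi))_{K}$ are submanifolds of $M_\textup{reg}$. For a torus, orbit-type pieces coincide with stabilizer-type pieces.

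\textbf{Smoothness of $\delta_\xi$.} The map $(m,t)\mapsto t\cdot m$ is the restriction of the smooth action map $M_\textup{reg}\times\T_\textup{int}\to M_\textup{reg}$ to the submanifold $\mu^{-1}(\xi)_{(H)}\times\T_\textup{int}$. It lands in the target submanifold. If those components are embedded submanifolds, this gives smoothness of the restriction.

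\textbf{Smoothness of the inverse.} For $y$ in the target, write $y=t\cdot m$. Then $\mu(y)=t\cdot\xi$, with $t\in\T_\textup{int}$ uniquely determined by Definition \ref{gen_gsdatum}(v). By the principal bundle structure, there is a smooth map
\[
s:\lambda_\textup{big}^{-1}(\lambda_\textup{big}(\xi))\cap\g^*_{M\textup{-reg}}\to\T_\textup{int}
\quad\text{with}\quad s(t\cdot\xi)=t,
\]
since the fibre is a $\T_\textup{int}$-torsor, hence a copy of $\T_\textup{int}$ on which the orbit map $t\mapsto t\cdot\xi$ is a diffeomorphism. The inverse is then
\[
y\mapsto\bigl(s(\mu(y))^{-1}\cdot y,\; s(\mu(y))\bigr),
\]
a composite of smooth maps. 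It lands in $\mu^{-1}(\xi)_{(H)}\times\T_\textup{int}$ by the set-theoretic identity above.

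\textbf{Main obstacle.} The delicate point is whether these stratum pieces are embedded or only initial submanifolds, so that maps into them that are smooth into $M$ are smooth into the pieces. I would handle this by citing that the Sjamaar--Lerman pieces $\mu^{-1}(\xi)_{(H)}$ are locally closed embedded submanifolds, and similarly for the torus pieces. With that, both directions are smooth and the restriction is a diffeomorphism.
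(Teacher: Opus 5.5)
Your proposal is correct and follows the paper's route. The paper also identifies $\mu^{-1}(\xi)_{(H)}\times\T_{\mathrm{int}}$ with the stabilizer-type piece for $H/[G_\xi,G_\xi]\times\{e\}$, then uses the $\rho_\xi$-equivariance of the homeomorphism $\delta_\xi$ to carry it onto the piece for $\tau_\xi(H/[G_\xi,G_\xi])\times\{e\}$. Your extra arguments go beyond what the paper writes down: Sjamaar--Lerman applied to the $\T_{\mathrm{big}}$-action gives the target its submanifold structure, and the explicit inverse $y\mapsto \bigl(s(\mu(y))^{-1}\cdot y,\; s(\mu(y))\bigr)$ from the principal $\T_{\mathrm{int}}$-bundle structure gives smoothness of the inverse, which the paper leaves implicit.
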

	
	We are now equipped to prove the main result of this manuscript.
	
	\begin{theorem}[Generalized abelianization]\label{Theorem: Main result}
		Let $(M,\omega,\mu)$ be a non-empty, connected Hamiltonian $G$-space.
		Suppose that $(\lambda_\textup{big}, \g^*_{M\textup{-reg}})$ is a generalized Gelfand-Cetlin datum for $M$.
		For $\xi \in \g^*_{M\textup{-reg}}$, there is a canonical isomorphism of stratified symplectic spaces
		$$
		M \sll{\xi} G \simeq M_\textup{reg} \sll{\lambda_\textup{big}(\xi)}\T_\textup{big}.
		$$
	\end{theorem}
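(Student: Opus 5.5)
The plan is to build the isomorphism from the homeomorphism $\delta_\xi$ of Proposition \ref{homeomorphism}, descended to orbit spaces, and to pair it with the stabilizer bijection of Proposition \ref{stabilizer_bijection}. We then check the three conditions of Definition \ref{sss_isomorph}. Write $\eta\coloneqq\lambda_{\mathrm{big}}(\xi)$. Since $\T_{\mathrm{big}}$ is abelian, $(\T_{\mathrm{big}})_\eta=\T_{\mathrm{big}}$. Hence $M_{\mathrm{reg}}\sll{\eta}\T_{\mathrm{big}}=\lambda_M^{-1}(\eta)/\T_{\mathrm{big}}$. On the other side, Proposition \ref{trivial_action} gives $\mu^{-1}(\xi)/G_\xi=\mu^{-1}(\xi)/(G_\xi/[G_\xi,G_\xi])$.

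\emph{Step 1: the homeomorphism.} Let $G_\xi/[G_\xi,G_\xi]\times\T_{\mathrm{int}}$ act on $\mu^{-1}(\xi)\times\T_{\mathrm{int}}$ by $([g],s)\cdot(m,t)=([g]\cdot m,st)$. By Proposition \ref{torus_iso_equiv} and Corollary \ref{rho_lie_group_iso}, $\delta_\xi$ is equivariant along the isomorphism $\rho_\xi$, as recorded before Proposition \ref{stratum_diffeo}. An equivariant homeomorphism intertwined by a group isomorphism descends to a homeomorphism of orbit spaces. It remains to identify the two orbit spaces. Dividing first by the free $\T_{\mathrm{int}}$-factor shows that $(\mu^{-1}(\xi)\times\T_{\mathrm{int}})/(G_\xi/[G_\xi,G_\xi]\times\T_{\mathrm{int}})$ is homeomorphic to $\mu^{-1}(\xi)/G_\xi$. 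Concretely, $\varphi([m])=[m]$ is induced by the inclusion $\mu^{-1}(\xi)\hookrightarrow\lambda_M^{-1}(\eta)$. This gives condition (i), and the construction is canonical because it is induced by inclusion.

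\emph{Step 2: the stabilizer bijection and strata.} Take $\phi$ to be the map of Proposition \ref{stabilizer_bijection}; this gives condition (ii). By Lemma \ref{normalization_lemma}, orbit types for $G_\xi$ on $\mu^{-1}(\xi)$ are honest stabilizers. Proposition \ref{stratum_diffeo} then shows that $\delta_\xi$ carries $\mu^{-1}(\xi)_{(H)}\times\T_{\mathrm{int}}$ onto $\lambda_M^{-1}(\eta)_{\phi(H)}$. Passing to quotients, $\varphi$ maps $\mu^{-1}(\xi)_{(H)}/G_\xi$ bijectively onto $\lambda_M^{-1}(\eta)_{\phi(H)}/\T_{\mathrm{big}}$. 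This map is a diffeomorphism, because it is induced from the smooth map $\delta_\xi$ between submanifolds and the quotient maps are submersions (Theorem \ref{thrm:Sjamaar-Lerman}(ii)). Its inverse is induced by the inclusion of $\mu^{-1}(\xi)_{(H)}$, which is a smooth local section of the $\T_{\mathrm{int}}$-orbits.

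\emph{Step 3: the symplectic forms.} Write $\pi_1$ and $\pi_2$ for the quotient maps of the two strata, and $j$ for the inclusion of $\mu^{-1}(\xi)_{(H)}$ into $\lambda_M^{-1}(\eta)_{\phi(H)}$. Then $\varphi\circ\pi_1=\pi_2\circ j$. Pulling back the reduced form on the $\T_{\mathrm{big}}$ side gives $\pi_1^*\varphi^*\omega_\eta=j^*\pi_2^*\omega_\eta=j^*\omega$, where we use the characterization in Theorem \ref{thrm:Sjamaar-Lerman}(iii) for both reductions. By the uniqueness in that theorem, $\varphi^*\omega_\eta=\omega_\xi$ on each stratum, which gives condition (iii).

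\emph{Main obstacle.} The hardest step is the smoothness claim in Step 2. Proposition \ref{homeomorphism} only gives a homeomorphism, and Proposition \ref{stratum_diffeo} asserts a diffeomorphism with little justification. To handle this, I would first show that $\mu^{-1}(\xi)_{(H)}$ is an embedded submanifold transverse to the free $\T_{\mathrm{int}}$-orbits in $\lambda_M^{-1}(\eta)_{\phi(H)}$. The tool is Definition \ref{gen_gsdatum}(v): $\mu$ intertwines the free $\T_{\mathrm{int}}$-action with the principal bundle structure on $\g^*_{M\text{-reg}}$, so $\mu^{-1}(\xi)$ is locally the preimage of a point under a local trivialization $\lambda_M^{-1}(\eta)\to\T_{\mathrm{int}}$. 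That trivialization is smooth on strata, which makes $\delta_\xi^{-1}$ smooth there.
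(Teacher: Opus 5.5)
Your proposal is correct and follows the paper's proof essentially step for step. You descend the $\rho_\xi$-equivariant homeomorphism $\delta_\xi$ (equivalently, the inclusion $\mu^{-1}(\xi)\hookrightarrow\lambda_M^{-1}(\lambda_{\mathrm{big}}(\xi))$) to orbit spaces, take $\phi$ from Proposition~\ref{stabilizer_bijection}, restrict to strata via Proposition~\ref{stratum_diffeo}, and match the reduced forms by pulling back along the submersive quotient map; your extra smoothness argument for $\delta_\xi^{-1}$ on strata, using the map $p\mapsto t_p$ with $t_p\in\T_{\mathrm{int}}$ the unique element satisfying $t_p\cdot\mu(p)=\xi$ from Definition~\ref{gen_gsdatum}(v), supplies a detail that the paper's Proposition~\ref{stratum_diffeo} leaves implicit.
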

	\begin{proof}
		In light of Definition \ref{sss_isomorph}, it suffices to construct a homeomorphism $\varphi:M \sll{\xi} G \longrightarrow M_\text{reg}\sll{\lambda_\text{big}(\xi)} \T_\text{big}$ and bijection $\phi:\text{Stab}(G,\xi) \longrightarrow \text{Stab}(\T_\text{big},\lambda_\text{big}(\xi))$ satisfying (i)--(iii) in that definition.
		We first construct $\varphi$.
		
		Consider the action of $G_\xi/[G_\xi, G_\xi]$ on $\mu^{-1}(\xi)$ and the multiplication action of $\T_\text{int}$ on itself. 
		We thereby obtain an action of $G_\xi/[G_\xi, G_\xi] \times \T_\text{int}$ on $\mu^{-1}(\xi) \times \T_\text{int}$.
		Recall the Lie group isomorphism $\rho_\xi:G_\xi/[G_\xi, G_\xi] \times \T_\text{int} \longrightarrow \T_\text{big}$ in Corollary \ref{rho_lie_group_iso}, and the fact that $\T_\text{big}$ acts on $\lambda_M^{-1}(\lambda_\text{big}(\xi))$.
		By Proposition \ref{torus_iso_equiv}, the homeomorphism $\delta_\xi:\mu^{-1}(\xi) \times \T_\text{int} \longrightarrow \lambda^{-1}_M(\lambda_\text{big}(\xi))$ is equivariant in the sense that
		\[
		\delta_\xi((g, t) \cdot (p,\tilde  t)) = \rho_\xi(g, t) \cdot \delta_\xi(p, \tilde t)  
		\]
		for all $g \in G_\xi/[G_\xi, G_\xi]$, $t,\tilde t \in \T_\text{int}$, and $p \in \mu^{-1}(\xi)$.
		Taking quotients by the $\T_\text{int}$-action implies that the inclusion $\mu^{-1}(\xi) \longrightarrow \lambda_M^{-1}(\lambda_\text{big}(\xi))$ descends to a homeomorphism
		\[
		f: \mu^{-1}(\xi) \overset{\simeq}{\longrightarrow} \lambda_M^{-1}(\lambda_\text{big}(\xi)) \big/ \T_\text{int} .
		\]
		
		The $\T_\text{big}$-action on $\lambda_M^{-1}(\lambda_\text{big}(\xi))$ induces a residual action of $\T_\text{small}$ on $\lambda_M^{-1} (\lambda_\text{big}(\xi))\big/ \T_\text{int}$.
		Additionally, Proposition \ref{torus_iso_equiv} tells us that the action of $G_\xi/[G_\xi, G_\xi]$ on $\mu^{-1}(\xi)$ satisfies
		\[
		f(g \cdot p) = \tau_\xi(g) \cdot f(p)
		\]
		for all $g \in G_\xi \big/ [G_\xi, G_\xi]$ and $p \in \mu^{-1}(\xi)$.
		It follows that $f$ induces a homeomorphism
		\[
		\varphi: \mu^{-1}(\xi) \big/ \left(G_\xi \big/ [G_\xi, G_\xi]\right) \overset{\simeq}\longrightarrow \lambda^{-1}_M(\lambda_\text{big}(\xi)) \big/ \T_\text{big}.
		\]
		Since the action of $G_\xi/[G_\xi, G_\xi]$ on $\mu^{-1}(\xi)$ is induced by that of $G_\xi$, we have
		\[
		\mu^{-1}(\xi)\big/\big(G_\xi/[G_\xi,G_\xi] \big) = \mu^{-1}(\xi)/G_\xi.
		\]
		The homeomorphism $\varphi$ now takes the form 
		\[
		M\sll{\xi}G =  \mu^{-1}(\xi) \big/ G_\xi  \overset{\varphi}{\underset{\simeq}\longrightarrow}  \lambda^{-1}_M(\lambda_\text{big}(\xi)) \big/ \T_\text{big} = M_\text{reg}\sll{\lambda_\text{big}(\xi)}\T_\text{big}.
		\]
		This argument can be summarized in the figure below, where all diagrams commute:
		\begin{center}
			\begin{tikzcd}
				& G_\xi \big/ [G_\xi, G_\xi] \times \T_\text{int} \arrow[d, phantom, "\circlearrowright \times \circlearrowright"] \arrow[rr, "\rho_\xi"]&& \T_\text{big} \arrow[d, phantom, "\circlearrowright"] \\
				& \mu^{-1}(\xi) \times \T_\text{int} \arrow[rr, "\delta_\xi"] \arrow[d] && \lambda^{-1}_M(\lambda_\text{big}(\xi))\arrow[d] \\
				& \mu^{-1}(\xi) \arrow[rr, "f"]\arrow[d] &&\lambda^{-1}_M(\lambda_\text{big}(\xi)) \big/ \T_\text{int}\arrow[dd] \\
				& \mu^{-1}(\xi)\big/ \left(G_\xi \big/ [G_\xi, G_\xi] \right) \arrow[rrd, "\varphi"]\arrow[d, equal] &&\\
				M\sll{\xi}G\arrow[r,equal] &\mu^{-1}(\xi)\big/ G_\xi  \arrow[rr, "\varphi"] &&\lambda^{-1}_M(\lambda_\text{big}(\xi)) \big/ \T_\text{big} \arrow[r, equal] &  M_\text{reg} \sll{\lambda_\text{big}(\xi)} \T_\text{big}  \,.
			\end{tikzcd}
		\end{center}
		
		Proposition \ref{stabilizer_bijection} gives the bijection $\phi$ referenced at the beginning of the proof. 
		At the same time, Proposition \ref{stratum_diffeo} implies that $\varphi$ restricts to a diffeomorphism 
		$$
		\varphi_{H}:\mu^{-1}(\xi)_{H}/G_\xi \longrightarrow \lambda_M^{-1}(\lambda_\text{big}(\xi))_{\phi(H)}/\T_\text{big}
		$$
		for all $H \in \text{Stab}(G,\xi)$.\footnote{Given Lemma \ref{normalization_lemma}, we take the liberty of writing $H$ for $(H)$ in notation concerning orbit-type strata.}
		
		Fix $H \in \text{Stab}(G,\xi)$. 
		Write $\alpha$ and $\beta$ as the symplectic forms on $\mu^{-1}(\xi)/G_\xi$ and $\lambda_M^{-1}(\lambda_\text{big}(\xi))/\T_\text{big}$, respectively.
		It remains only to prove that $\varphi_{H}^*\beta = \alpha$.
		
		By Proposition \ref{stratum_diffeo}, we have the following commutative diagram
		\begin{center}
			\begin{tikzcd}
				& M & \\
				\mu^{-1}(\xi)_{H}\arrow[d, "\pi"] \arrow[rr, "j", hookrightarrow]\arrow[ur, hookrightarrow, "k"] && \lambda_M^{-1}(\lambda_\text{big}(\xi))_{\tau_\xi(H/ [G_\xi, G_\xi])} \arrow[d, "\theta"] \arrow[ul, hookrightarrow, "\ell", swap]\\
				\mu^{-1}(\xi)_{H}\big/G_\xi \arrow[rr, "\varphi_{H}"] && \lambda_M^{-1}(\lambda_\text{big}(\xi))_{\tau_\xi(H/ [G_\xi, G_\xi])}\big/ \T_\text{big} \, ,
			\end{tikzcd}
		\end{center}
		where $j$,$k$, and $\ell$ are the expected inclusions and $\pi$ and $\theta$ are the expected quotients.
		Note that
		\[
		\pi^* \alpha = k^*\omega \quad \text{and} \quad \theta^*\beta = \ell^*\omega.
		\] 
		It follows that 
		\[
		\pi^*(\varphi_{H}^*\beta) = j^*(\theta^*\beta) = j^*(\ell^*\omega) = k^*\omega = \pi^* \alpha.
		\]
		Since $\pi$ is a submersion, $\varphi^*_{H}\beta = \alpha$.
	\end{proof}
	
	\section{An example}
	We now use work of the first author and Mayrand \cite{crooks2022Symplectic} to realize each orbit-type stratum of $\mathfrak{g}^*$ as the moment image of a canonical Hamiltonian $G$-space. This is the preoccupation of Section \ref{sec:moment_map_image_submanifold}. In Section \ref{Subsection: GC in this case}, we calculate some of the quantities in a generalized Gelfand--Cetlin datum for this space.
	
	\subsection{Moment map images for any orbit-type stratum}\label{sec:moment_map_image_submanifold}
	Consider an arbitrary compact connected Lie group $G$ with Lie algebra $\g$.
	Theorem \ref{Theorem: Main result} is obviously sensitive to the principal orbit-type stratum of a Hamiltonian $G$-space.
	This raises the question of which Hamiltonian $G$-spaces have a given orbit-type stratum $\Sigma \subset \g^*$ as their principal orbit-type stratum.
	Certain examples are immediate: a coadjoint orbit $\mathcal{O}\subset \Sigma$ has $\Sigma$ as its principal orbit-type stratum. 
	One may ask whether there are more ``interesting" and/or ``canonical" examples.
	An affirmative answer would highlight the usefulness of a generalized Gelfand-Cetlin datum over the Gelfand-Cetlin data of the first author and Weitsman \cite{crooks2024Gelfand}.
	We use the first author and Mayrand's previous results \cite{crooks2022Symplectic} to give an affirmative answer.
	
	Consider a $G$-invariant submanifold $S \subset \mathfrak{g}^*$.
	For any $\xi \in S$, 
	\begin{equation}\label{eq:annoying_subalgebra}
		\mathfrak{h}_\xi := \mathrm{ann}_{\mathfrak{g}}(T_\xi S) 
	\end{equation}
	is a Lie subalgebra of $\g$.
	Let $H_\xi \subset G$ be a connected closed subgroup integrating $\mathfrak{h}_\xi \subset \mathfrak{g}$ for each $\xi \in S$.
	At the same time, use the left trivialization to freely identify $T^*G = G \times \mathfrak{g}^*$.
	We define the subset 
	$$
	\mathcal{H}:=\{(g,\xi) \in G \times S \ | \ g \in H_\xi \} \subset T^*G.
	$$
	Note that $\mathcal{H}$ is a group scheme over $S$.
	It acts on $G \times S \longrightarrow S$ by $(g,x) \cdot (h,y) = (hg^{-1},x)$ for $(g,x) \in \mathcal{H}$ and $(h,y)\in G \times S$ with $x=y$.
	The quotient  $(G \times S) \big/ \mathcal{H}$ can be can be described set theoretically as 
	$$
	(G \times S) \big/ \mathcal{H} = \bigsqcup_{\xi \in S} G \big/H_\xi.
	$$
	Note also that left multiplication by $G$ on the first factor of $G \times S$ descends to an action of $G$ on $(G \times S)/ \mathcal{H}$.
	The following can be found in \cite[Section 4.5]{crooks2022Symplectic}.
	\begin{theorem}\label{reduction_submanifold_thrm}
		Let $G$, $S \subset \mathfrak{g}^*$, and $\mathcal{H}$ be as above. If $\mathcal{H}$ is closed in $G \times S$, then the following statements are true.
		\begin{enumerate}[(i)]
			\item There exists a unique symplectic manifold structure on $(G \times S)\big/\mathcal{H}$ with the following property: $\pi^*\overline \omega = j^*\omega$, where $\pi:G \times S \longrightarrow (G \times S) \big/ \mathcal{H}$ is the quotient, $j:G \times S \longrightarrow T^*G$ is the inclusion, $\overline{\omega}$ is the symplectic form on $(G \times S) \big/ \mathcal{H}$, and $\omega$ is the symplectic form on $T^*G$;
			\item The action of $G$ on $(G \times S)\big/\mathcal{H}$ is Hamiltonian with moment map 
			\begin{align*}
				\mu:(G\times S)\big/\mathcal{H} &\longrightarrow \mathfrak{g}^* \\
				[(g,x)] &\longmapsto -\mathrm{Ad}_g^*(x).
			\end{align*}
		\end{enumerate}
	\end{theorem}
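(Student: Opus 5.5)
The statement is quoted from \cite[Section 4.5]{crooks2022Symplectic}, so the plan is to rederive it by symplectic reduction of $T^*G$ along the pre-Poisson submanifold $G\times S$. In the left trivialization, $T^*G=G\times\g^*$. The cotangent lift of right multiplication has moment map $(g,\xi)\mapsto \xi$, up to sign conventions, and the cotangent lift of left multiplication has moment map $(g,\xi)\mapsto -\mathrm{Ad}^*_g(\xi)$. Hence $G\times S$ is the preimage of $S$ under the right moment map.

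\textbf{Step 1: identify the characteristic distribution.} At $(g,\xi)\in G\times S$ we have $T(G\times S)=T_gG\oplus T_\xi S$. I will compute the $\omega$-orthogonal $T(G\times S)^{\omega}\cap T(G\times S)$ using the standard formula for $\omega$ in the left trivialization,
\[
\omega_{(g,\xi)}((x_1,\eta_1),(x_2,\eta_2))=\eta_1(x_2)-\eta_2(x_1)+\xi([x_1,x_2]).
\]
Vectors $\omega$-orthogonal to $0\oplus T_\xi S$ must have $x\in\mathrm{ann}_{\g}(T_\xi S)=\h_\xi$. Orthogonality to $T_gG\oplus 0$ then forces $\eta=\mathrm{ad}^*_x\xi$. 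Because $S$ is $G$-invariant, $\mathrm{ad}^*_x\xi\in T_\xi S$. So the characteristic distribution is $\{(x,\mathrm{ad}^*_x\xi):x\in\h_\xi\}$, which has constant rank $\dim\h_\xi$. This uses that $\dim T_\xi S$ is locally constant and that $\h_\xi$ is a subalgebra, which follows from $T_\xi S\supset T_\xi(G\cdot\xi)$.

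\textbf{Step 2: the leaves are the orbits of $\mathcal{H}$.} I must check that the leaves of this distribution are exactly the $\mathcal{H}$-orbits. The vector $(x,\mathrm{ad}^*_x\xi)$ is the infinitesimal generator of right translation by $\exp(tx)$ combined with the coadjoint motion of $\xi$. This is the point where the $\mathcal{H}$-action as written, which fixes $\xi$, must be reconciled. I would first try a gauge transformation: in the right trivialization $(g,\xi)\mapsto(g,\mathrm{Ad}^*_g\xi)$ the distribution becomes $\{(x,0)\}$, so the leaves are $gH_\xi\times\{\xi\}$, which is the stated action up to this identification. Closedness of $\mathcal{H}$, together with compactness of $G$, gives a free and proper action of the group scheme. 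The leaf space is therefore a smooth manifold, and $\pi$ is a submersion with fibers $G/H_\xi$-type quotients.

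\textbf{Step 3: descend the form and the moment map.} Presymplectic reduction (the coisotropic or constant-rank reduction theorem) now shows that $j^*\omega$ descends to a unique closed nondegenerate $\overline\omega$ with $\pi^*\overline\omega=j^*\omega$. Uniqueness follows because $\pi$ is a surjective submersion. For (ii), the left $G$-action commutes with the right $\mathcal{H}$-action and preserves $G\times S$, so it descends. Its moment map $-\mathrm{Ad}^*_g(\xi)$ is $\mathcal{H}$-invariant, since it only involves $gh^{-1}$ with $h\in H_\xi\subset G_\xi$. The Hamiltonian condition is checked upstairs and pushed down via $\pi^*$.

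\textbf{Main obstacle.} I expect the main obstacle to be Step 2. It requires matching the integral leaves with the group-scheme orbits and establishing smoothness of the quotient from mere closedness of $\mathcal{H}$. This needs a slice or tubular argument using that $\xi\mapsto\h_\xi$ varies smoothly on $S$.
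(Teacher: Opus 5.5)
The paper gives no argument for this statement; it is imported from \cite[Section 4.5]{crooks2022Symplectic}, so your outline has to stand on its own. Its overall shape is the right one: presymplectic reduction of $j^*\omega$ on $G\times S$, then descent of the left $G$-action and its moment map. Steps 1 and 3 are essentially fine, but Step 2 rests on a missed observation. Because $S$ is $G$-invariant, $\mathrm{ann}_{\g^*}(\g_\xi)=T_\xi(G\cdot\xi)\subset T_\xi S$, so $\h_\xi\subset\g_\xi$ and $\mathrm{ad}^*_x\xi=0$ for every $x\in\h_\xi$. Your null distribution is therefore $\{(x,0):x\in\h_\xi\}$, i.e.\ $T_g(gH_\xi)\oplus 0$, already in the left trivialization. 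Its leaves are literally the $\mathcal{H}$-orbits $gH_\xi\times\{\xi\}$; equivalently, the cotangent lift of right multiplication by $h\in H_\xi\subset G_\xi$ fixes $\xi$. There is nothing to reconcile, and the gauge-transformation detour should be dropped. A change of trivialization transports leaves and orbits together, so it could not have matched them had they genuinely differed. Two small related slips. First, the subalgebra property of $\h_\xi$ comes from $G_\xi$-stability of $T_\xi S$, which in fact makes $\h_\xi$ an ideal of $\g_\xi$; the inclusion $T_\xi(G\cdot\xi)\subset T_\xi S$ alone does not give it. Second, your formula for $\omega$ has the wrong relative sign on $\xi([x_1,x_2])$ for the left trivialization. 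This is harmless only because that term vanishes when either argument lies in $\g_\xi$.

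The genuine gap is the step ``closedness of $\mathcal{H}$ plus compactness of $G$ give a free and proper action, so the leaf space is a smooth manifold.'' Freeness and properness are topological. The quotient theorem you appeal to requires $\mathcal{H}$ to already be an embedded Lie subgroupoid of $G\times S$ with submersive projection to $S$; equivalently, the null foliation must be simple. Deriving this from the purely topological hypothesis that $\mathcal{H}$ is closed is the only non-formal part of the statement, and you flag it without supplying it. One way to close it is in four steps.
(a) Closedness and compactness give that, for $\eta$ near $\xi$, $H_\eta$ lies in any prescribed neighbourhood $U$ of $H_\xi$. Otherwise some $h_n\in H_{\eta_n}\setminus U$ with $\eta_n\to\xi$ would subconverge to a point of $H_\xi$ outside $U$.
(b) By the Montgomery--Zippin theorem, such an $H_\eta$ is conjugate into $H_\xi$. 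Since it is connected with $\dim H_\eta=\dim\h_\eta=\dim\h_\xi$, it is conjugate to $H_\xi$.
(c) Hence, near $\xi$, the smooth map $\eta\mapsto\h_\eta$ into the Grassmannian takes values in the compact orbit $G\cdot\h_\xi\cong G/N_G(H_\xi)$. A local section of $G\to G/N_G(H_\xi)$ then gives a smooth $s$ with $H_\eta=s(\eta)H_\xi s(\eta)^{-1}$, just as in the paper's later proof that $\mathcal{H}$ is closed when $S=\Sigma$.
(d) The map $(g,\eta)\mapsto(g\,s(\eta)H_\xi,\eta)$ is a submersion $G\times V\to G/H_\xi\times V$ whose fibres are exactly the $\mathcal{H}$-orbits. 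This produces the manifold structure with $\pi$ a submersion, and Hausdorffness follows using the continuous projection to $S$.

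With this in place, Step 3 goes through as you describe. The fibres of $\pi$ are connected and equal the null leaves, so $j^*\omega$ is basic and descends to a closed nondegenerate $\overline\omega$, unique since $\pi$ is a surjective submersion. The moment-map identity descends by injectivity of $\pi^*$.
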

	
	We would like to apply this theorem to the case where $S$ is an orbit-type stratum $\Sigma \subset \g^*$.
	Applicability hinges on $\mathcal{H}$ being closed in $G \times \Sigma$, a condition that we now investigate.
	
	Fix a maximal torus $T \subset G$ with Lie algebra $\mathfrak{t}$, and choose a fundamental Weyl chamber $\mathfrak{t}_+ \subset \mathfrak{t}$.
	As in Section \ref{sec:lie-theory}, we use a $G$-invariant inner product $\langle -,- \rangle:\g \times \g \longrightarrow \R$ to freely identity $\mathfrak{g} \simeq \mathfrak{g}^*$.
	This identifies $\Sigma \subset \g^*$ with an orbit-type stratum $\Sigma_\g \subset \g$ for the adjoint action.
	We have that $\Sigma_{\g} = G \sigma$ for an open face $\sigma \subset \t_+$.
	On the other hand, \eqref{eq:annoying_subalgebra} becomes 
	$$
	\mathfrak{h}_\xi \coloneqq \text{ann}_\g(T_\xi \Sigma)
	$$
	for $\xi \in \Sigma$.
	We also define 
	$$
	\h_\eta \coloneqq (T_\eta\Sigma_\g)^\perp
	$$
	for $\eta \in \Sigma_\g$.
	Note that $\h_\eta = \h_\xi$ when $\eta$ and $\xi$ correspond under to one another under the identification $\Sigma_g \simeq \Sigma$, i.e. when $\xi = \langle \eta, - \rangle$.
	
	\begin{proposition}
		For all $\xi \in \Sigma$, $\mathfrak{h}_\xi = [\mathfrak{g}_\xi, \mathfrak{g}_\xi]$. 
	\end{proposition}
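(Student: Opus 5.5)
The plan is to reduce the claim directly to Lemma \ref{strata_cotantent_space} and then take annihilators. By definition, $\mathfrak{h}_\xi = \mathrm{ann}_{\g}(T_\xi\Sigma)$. Lemma \ref{strata_cotantent_space} gives
$$T_\xi\Sigma = \mathrm{ann}_{\g^*}([\g_\xi,\g_\xi]).$$
Hence
$$\mathfrak{h}_\xi = \mathrm{ann}_{\g}\big(\mathrm{ann}_{\g^*}([\g_\xi,\g_\xi])\big).$$
Since $\g$ is finite-dimensional, the double annihilator of a linear subspace $V \subset \g$ equals $V$ under the canonical identification $\g \cong (\g^*)^*$. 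This yields $\mathfrak{h}_\xi = [\g_\xi,\g_\xi]$.

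To stay consistent with how the paper sets things up after the statement, I would also give the equivalent argument on the $\g$ side, using the inner product identification. Let $\eta \in \Sigma_\g$ correspond to $\xi$. The proof of Lemma \ref{strata_cotantent_space} shows that
$$T_\eta\Sigma_\g = [\g,\eta]\oplus \z(\g_\eta) = [\g_\eta,\g_\eta]^\perp,$$
using the orthogonal decomposition \eqref{lie_alg_decomp}. Taking orthogonal complements gives
$$\h_\eta = (T_\eta\Sigma_\g)^\perp = [\g_\eta,\g_\eta].$$
Two facts transport this back to $\xi$. First, $\h_\eta = \h_\xi$, as noted just before the statement. Second, $\g_\eta = \g_\xi$, because $(-)^\vee$ is a $G$-module isomorphism, so stabilizers and centralizers agree.

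The only point requiring care is the orthogonality in \eqref{lie_alg_decomp}. The sum $\g = \g_x \oplus [\g,x]$ is orthogonal by the invariance of the inner product: for $y \in \g_x$,
$$\langle y,[z,x]\rangle = \langle [x,y],z\rangle = 0.$$
The sum $\g_x = \z(\g_x)\oplus[\g_x,\g_x]$ is orthogonal for the same reason. I would also check that the identification $T_x\Sigma_\g = [\g,x]+\z(\g_x)$ is valid, i.e. that $T_x\sigma_\g$ spans $\z(\g_x)$. This holds because the open face $\sigma_\g$ is open in $\z(\g_\sigma)\cap\t$, and $\z(\g_x)\subset \t$ for $x\in\t$. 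All of this is already asserted in the proof of Lemma \ref{strata_cotantent_space}, so I expect no real obstacle. The proposition is essentially a restatement of that lemma via double annihilators.
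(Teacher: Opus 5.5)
Your proposal is correct and essentially matches the paper. Your primary argument, Lemma \ref{strata_cotantent_space} plus the double-annihilator identity, is a valid shortcut. Your second argument is the paper's own proof: compute $(T_\eta\Sigma_\g)^\perp=\z(\g_\eta)^\perp\cap\g_\eta=[\g_\eta,\g_\eta]$ from the orthogonal decomposition, then pass to $\xi$ using $\h_\xi=\h_\eta$ and $\g_\eta=\g_\xi$.

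One step the paper makes explicit that your final verification glosses over is where the identity $T_\eta\Sigma_\g=\z(\g_\eta)\oplus[\g,\eta]$ holds. It is first checked only for $\eta$ in the open face $\sigma$, where $\z(\g_\eta)\subset\t$, which is what your argument uses. It is then transported to an arbitrary $\zeta=\Ad_g(\eta)\in\Sigma_\g$ via the diffeomorphism $\Ad_g\big\vert_{\Sigma_\g}$, giving $(T_\zeta\Sigma_\g)^\perp=\Ad_g([\g_\eta,\g_\eta])=[\g_\zeta,\g_\zeta]$.
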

	
	\begin{proof}
		For $\eta \in \sigma$, we have the decomposition 
		\begin{align*}
			T_\eta\Sigma_{\mathfrak{g}} &= T_\eta \sigma + T_\eta(G\eta) 
			=\mathfrak{z}(\mathfrak{g}_\eta) \oplus [\mathfrak{g}, \eta].
		\end{align*}
		Computing orthogonal components gives
		\begin{align*}
			\h_\eta &= (\mathfrak{z}(\mathfrak{g}_\eta) \oplus [\mathfrak{g}, \eta])^\perp 
			= \mathfrak{z}(\mathfrak{g}_\eta)^\perp \cap \mathfrak{g}_\eta.
		\end{align*}
		Note also that $\mathfrak{g} = \mathfrak{g}_\eta \oplus [\mathfrak{g}, \eta] = \mathfrak{z}(\mathfrak{g}_\eta) \oplus [\mathfrak{g}_\eta, \mathfrak{g}_\eta] \oplus [\mathfrak{g}, \eta]$. 
		As this decomposition is orthogonal,
		\begin{align*}
			\h_\eta &= ([\mathfrak{g}_\eta, \mathfrak{g}_\eta] \oplus [\mathfrak{g}, \eta]) \cap \mathfrak{g}_\eta
			= [\mathfrak{g}_\eta, \mathfrak{g}_\eta].
		\end{align*}
		
		Now consider an arbitrary element $\zeta \in \Sigma_{\mathfrak{g}}$. 
		Since $\Sigma_{\mathfrak{g}} = G\sigma$, $\zeta = \mathrm{Ad}_g(\eta)$ for some $\eta \in \sigma$ and $g \in G$.
		The map $\mathrm{Ad}_g:\mathfrak{g} \longrightarrow \mathfrak{g}$ restricts to a diffeomorphism $\mathrm{Ad}_g\big\vert_{\Sigma_{\mathfrak{g}}}:\Sigma_{\mathfrak{g}} \longrightarrow \Sigma_{\mathfrak{g}}$ that sends $\eta$ to $\zeta$.
		It follows that $\mathrm{d}(\mathrm{Ad}_g)_\eta = \mathrm{Ad}_g:\mathfrak{g}\longrightarrow \mathfrak{g}$ restricts to a vector space isomorphism $T_\eta\Sigma_{\mathfrak{g}} \longrightarrow T_\zeta\Sigma_{\mathfrak{g}}$.
		This gives 
		$$
		(T_\zeta\Sigma_{\mathfrak{g}})^\perp = \mathrm{Ad}_g([\mathfrak{g}_\eta, \mathfrak{g}_\eta]) = [\mathfrak{g}_\zeta, \mathfrak{g}_\zeta].
		$$
		
		We now suppose that $\xi \in \Sigma$, as in the statement of the proposition. 
		Let $\zeta \in \Sigma_\g$ be such that $\xi = \langle \zeta, -\rangle$. 
		As discussed just before the proposition, $\h_\xi = \h_\zeta$.
		We also know from above that $\h_\zeta = [\g_\zeta, \g_\zeta]$.
		It now suffices to observe that $\g_\zeta = \g_\xi$.
	\end{proof}
	For $\xi \in \Sigma$, 
	$$
	H_\xi \coloneqq [G_\xi, G_\xi]
	$$
	is a connected closed subgroup of $G$ integrating $\h_\xi = [\g_\xi, \g_\xi]$.
	In keeping with previous notation, we define
	$$
	\mathcal{H} := \{(g, \xi) \in G \times \Sigma \ | \ g \in [G_\xi, G_\xi]\}.
	$$
	
	\begin{proposition}
		The subset $\mathcal{H}$ is closed in $G \times \Sigma$. 
	\end{proposition}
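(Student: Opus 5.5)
The plan is to reduce to the open face $\sigma=\Sigma\cap\t^*_+$, on which the commutator subgroup is constant, and then use $G$-equivariance. Every $\xi\in\Sigma$ can be written $\xi=\mathrm{Ad}^*_h(\eta)$ with $h\in G$ and $\eta\in\sigma$. Then $G_\xi=hG_\sigma h^{-1}$, so
$$[G_\xi,G_\xi]=h[G_\sigma,G_\sigma]h^{-1}.$$
Set $K:=[G_\sigma,G_\sigma]$. This is a fixed closed subgroup: it is compact, since the commutator subgroup of a compact connected Lie group is closed. We therefore have the description
$$\mathcal{H}=\{(g,\xi)\in G\times\Sigma \ | \ \exists\, h\in G,\ \eta\in\sigma:\ \xi=\mathrm{Ad}^*_h(\eta),\ h^{-1}gh\in K\}.$$

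To prove closedness, take a sequence $(g_n,\xi_n)\in\mathcal{H}$ converging to $(g,\xi)\in G\times\Sigma$. We must show $g\in[G_\xi,G_\xi]$.

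First, choose $h_n\in G$ with $\eta_n\coloneqq\mathrm{Ad}^*_{h_n^{-1}}(\xi_n)\in\sigma$; by the sweeping map, $\eta_n=\pi(\xi_n)$. The sweeping map $\pi$ is continuous, so $\eta_n\to\pi(\xi)$. Since $\xi\in\Sigma$, the limit $\pi(\xi)$ lies in $\sigma$ (not merely in its closure); this is exactly where we use that the limit is taken inside $\Sigma$ rather than in all of $\g^*$.

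Next, use compactness of $G$ to pass to a subsequence with $h_n\to h$. Then $\mathrm{Ad}^*_{h^{-1}}(\xi)=\pi(\xi)\in\sigma$. Also $k_n\coloneqq h_n^{-1}g_nh_n\in K$, and $k_n\to h^{-1}gh$. Because $K$ is closed, $h^{-1}gh\in K$. Hence $g\in hKh^{-1}=[G_\xi,G_\xi]$, since $G_\xi=hG_{\pi(\xi)}h^{-1}=hG_\sigma h^{-1}$.

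Two facts need justification:

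\begin{enumerate}[(i)]
\item The stabilizer $G_\eta$ is the same group $G_\sigma$ for all $\eta$ in the open face $\sigma$. The paper uses this in Section \ref{sec:gen-GC-data-definition}.
\item $[G_\sigma,G_\sigma]$ is closed. It is the connected semisimple subgroup integrating $[\g_\sigma,\g_\sigma]$, and such subgroups of compact groups are compact.
\end{enumerate}

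The main subtlety, which I expect to be the only real obstacle, is that the conclusion fails if $\xi$ is allowed to leave $\Sigma$. At a boundary point the stabilizer jumps and the commutator need not vary semicontinuously in the right direction. This is why the statement asserts closedness only in $G\times\Sigma$, and why the argument relies on $\pi(\xi)\in\sigma$ to keep the stabilizer constant.
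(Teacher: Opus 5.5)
Your proof is correct, but it takes a different route from the paper's.

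\textbf{The paper's route.} The paper fixes $\xi\in\Sigma$ and identifies the conjugates of $[G_\xi,G_\xi]$ with $G/N_\xi$, where $N_\xi$ is the normalizer. It asserts that $\eta\mapsto[G_\eta,G_\eta]$ defines a continuous map $\phi:\Sigma\to G/N_\xi$. It then takes a local section $s$ of $G\to G/N_\xi$ near $[e]$. Fibrewise conjugation by $s(\phi(\eta))$ trivializes $\mathcal{H}$ over $V=\phi^{-1}(U)$ as $V\times[G_\xi,G_\xi]$. That conjugation is a homeomorphism of $G\times V$, so $\mathcal{H}\cap(G\times V)$ is closed in $G\times V$. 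Closedness then follows from the open cover.

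\textbf{Your route.} You argue sequentially, which is legitimate because $G\times\Sigma$ is metrizable. You conjugate everything back to the single model group $K=[G_\sigma,G_\sigma]$ and use compactness of $G$ to make the conjugating elements $h_n$ converge. Closedness of $K$ together with $\pi(\xi)\in\sigma$ then finishes the argument.

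\textbf{Comparison.} Your argument is more elementary and self-contained. It needs neither local sections nor the continuity of $\phi$, which the paper states without proof; that continuity follows from $\Sigma\cong G/G_\sigma\times\sigma$. The paper's argument gives more: $\mathcal{H}\to\Sigma$ is locally trivial as a bundle of groups, which is the kind of structure relevant to applying Theorem~\ref{reduction_submanifold_thrm}. Yours gives exactly closedness and relies on compactness of $G$.

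\textbf{A correction to your last paragraph.} This does not affect the proof itself, but the explanation of why $\xi\in\Sigma$ matters is wrong. Derived stabilizers do vary semicontinuously in the right direction. Suppose $\xi_n\to\xi$ with $\xi_n\in\Sigma$ and $\xi\in\g^*$ arbitrary. Pass to a subsequence so that all $\pi(\xi_n)$ lie in one open face $\sigma'$. Then $\pi(\xi)\in\overline{\sigma'}$, so $G_{\sigma'}\subset G_{\pi(\xi)}$, since the fixed-point set of $G_{\sigma'}$ is closed. Your argument then shows that $\{(g,\xi)\in G\times\g^* : g\in[G_\xi,G_\xi]\}$ is closed in all of $G\times\g^*$. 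So the conclusion $g\in[G_\xi,G_\xi]$ does not fail at boundary points. In your proof, the hypothesis $\xi\in\Sigma$ only serves to keep the model group equal to $K$. Restricting to $\Sigma$ matters elsewhere, for instance for constant fibre dimension of $\mathcal{H}$, not for closedness.
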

	\begin{proof}
		Fix any $\xi \in \Sigma$ and let $N_\xi$ denote the normalizer of $[G_\xi,G_\xi]$ in $G$.
		Using the unique $G$-equivariant bijection that sends the identity coset $[e]$ to the subgroup $[G_\xi, G_\xi]$, 
		we $G$-equivariantly identify $G/N_\xi$ with the set of closed subgroups of $G$ conjugate to $[G_\xi, G_\xi]$.
		This is precisely the set of derived stabilizers of points in $\Sigma$.
		
		We now define a continuous map $\phi:\Sigma \longrightarrow G/N_\xi$ by the property that 
		$$
		\phi(\eta)[G_\xi, G_\xi]\phi(\eta)^{-1} = [G_\eta, G_\eta] 
		$$
		for all $\eta \in \Sigma$.
		We may also choose an open neighborhood $U$ of $[e] \in G/N_\xi$ and a smooth section $s:G/N_\xi \longrightarrow G$ such that $s([e]) = e$. 
		Let $V := \phi^{-1}(U)$. 
		Then for each $\eta \in V \subset \Sigma$, $s\big(\phi(\eta)\big) \in G$ satisfies 
		$$
		s\big(\phi(\eta)\big)[G_\xi, G_\xi]s\big(\phi(\eta)\big)^{-1} = [G_\eta, G_\eta].
		$$
		It follows that the map 
		\begin{align*}
			V \times [G_\xi, G_\xi] &\longrightarrow \pi^{-1}(V) \\
			(\eta, g) &\longmapsto \bigg(s\big(\phi(\eta)\big)g s\big(\phi(\eta)\big)^{-1}, \eta\bigg)
		\end{align*}
		defines a trivialization of $\pi:\mathcal{H} \longrightarrow \Sigma$ over $V$.
		Since $[G_\xi, G_\xi] \times V$ is closed in in $G \times V$, $\pi^{-1}(V) = \mathcal{H}\cap (G \times V)$ is also closed in $G \times V$.
		
		The preceding argument shows that $G \times \Sigma$ admits an open cover $\{U_i\}_{i \in I}$ such that $\mathcal{H} \cap U_i$ is closed in $U_i$ for all $i \in I$.
		We conclude that $\mathcal{H}$ is closed in $G \times \Sigma$.
	\end{proof}
	
	By Theorem \ref{reduction_submanifold_thrm}, $(G \times \Sigma)\big/\mathcal{H}$ is a Hamiltonian $G$ space, and the image of the moment map $\mu:(G\times \Sigma)/\mathcal{H} \longrightarrow \g^*$ is
	\begin{equation}\label{eq:moment_map_image_is_stratum}
		\mu\left((G\times \Sigma)\big/ \mathcal{H}\right) = -\Sigma = \Sigma.
	\end{equation}
	\subsection{Generalized Gelfand--Cetlin data in this case}\label{Subsection: GC in this case}
	Continuing with the notation and assumptions from Section \ref{sec:moment_map_image_submanifold}, let $M:= (G \times \Sigma) \big/\mathcal{H}$.
	We now calculate the quantities $\ell$, $u$, and $b$ from Definition \ref{gen_gsdatum} for a generalized Gelfand-Cetlin datum on $M$.
	
	Recall that $M$ can be described as 
	$$
	M = \bigsqcup_{\xi \in \Sigma} G \big/ [G_\xi, G_\xi].
	$$
	Note also that $\dim([G_\xi, G_\xi])$ is independent of $\xi \in \Sigma$. 
	This can be used to prove that
	\begin{align*}
		\dim M = \dim \Sigma + \dim (G \big/ [G_\xi, G_\xi]) 
		= 2\big(\dim G - \dim( [G_\xi, G_\xi])\big),
	\end{align*}
	where $\dim \Sigma$ is calculated from Lemma \ref{strata_cotantent_space}.
	
	It remains to calculate the complexity $C_GM := \max\{\dim M \sll{\xi} G \ | \ \xi \in \Sigma\}$.
	Fix $\xi \in \Sigma$.
	We have
	$$
	\mu^{-1}(\xi) = \bigsqcup_{\eta \in G\cdot (-\xi)} \{g \in G \ | \ \text{Ad}_g(\eta) = -\xi\} \big/ [G_\eta, G_\eta].
	$$ Now consider the left multiplication action of $G_\xi$ on $\{g \in G \ | \ \text{Ad}_g(\eta) = -\xi \}$.
	Let $g,h \in G$ be such that $\text{Ad}_g(\eta) = \text{Ad}_h(\eta) = -\xi$.
	This gives
	$$
	\text{Ad}_{g^{-1}}(-\xi) = \text{Ad}_{h^{-1}}(-\xi) \implies \text{Ad}_{gh^{-1}}(-\xi) = -\xi.
	$$
	It follows that $gh^{-1} \in G_\xi$, i.e. $g \in G_\xi h$. 
	This implies that the left multiplication action of $G_\xi$ is transitive on $\{g \in G \ | \ \text{Ad}_g(\eta) = -\xi \}$.
	We therefore have
	$$
	\dim (\mu^{-1}(\xi) \big/ G_\xi) = \dim G - \dim G_\xi= \dim G - \dim G_\sigma,
	$$
	where $\sigma$ is the open face corresponding to $\Sigma$ and $G_\sigma \subset G$ is the common $G$-stabilizer for elements of $\sigma$.
	In other words, 
	$$
	C_G(M) = \dim G - \dim G_\sigma.
	$$
	
	Following Definition \ref{gen_gsdatum}, we have the quantities 
	$$
	\ell=\dim(G_\sigma \big/ [G_\sigma, G_\sigma]) = \dim\sigma,
	$$ 
	\begin{align*}
		b &= \frac{1}{2}\big(\dim M - C_G(M)\big) \\ 
		&= \frac{1}{2}\Big( 2\big(\dim G - \dim ([G_\sigma, G_\sigma])\big) - \big(\dim G - \dim G_\sigma \big) \Big)\\
		&= \frac{1}{2}\big( \dim G - \dim ([G_\sigma, G_\sigma]) + \dim \sigma \big),
	\end{align*}
	and
	$$u=b-\ell=\frac{1}{2}\big( \dim G - \dim ([G_\sigma, G_\sigma]) - \dim \sigma \big).$$
	
	\bibliographystyle{acm}
	\bibliography{GenGel}

\begin{thebibliography}{10}

\bibitem{bredon1972introduction}
{\sc Bredon, G.~E.}
\newblock {\em Introduction to compact transformation groups}.
\newblock Pure and Applied Mathematics, Vol. 46. Academic Press, New
  York-London, 1972.

\bibitem{crooks2022Symplectic}
{\sc Crooks, P., and Mayrand, M.}
\newblock Symplectic reduction along a submanifold.
\newblock {\em Compos. Math. 158}, 9 (2022), 1878--1934.

\bibitem{crooks2024Gelfand}
{\sc Crooks, P., and Weitsman, J.}
\newblock Gelfand-{C}etlin abelianizations of symplectic quotients.
\newblock {\em Pacific J. Math. 333}, 2 (2024), 253--271.

\bibitem{duistermaat}
{\sc Duistermaat, J.~J., and Kolk, J. A.~C.}
\newblock {\em Lie groups}.
\newblock Universitext. Springer-Verlag, Berlin, 2000.

\bibitem{gui-jef-sja:02}
{\sc Guillemin, V., Jeffrey, L., and Sjamaar, R.}
\newblock Symplectic implosion.
\newblock {\em Transform. Groups 7}, 2 (2002), 155--184.

\bibitem{guillemin1983GelfandCetlin}
{\sc Guillemin, V., and Sternberg, S.}
\newblock The {G}elfand-{C}etlin system and quantization of the complex flag
  manifolds.
\newblock {\em J. Funct. Anal. 52}, 1 (1983), 106--128.

\bibitem{guillemin1983collective}
{\sc Guillemin, V., and Sternberg, S.}
\newblock On collective complete integrability according to the method of
  {T}himm.
\newblock {\em Ergodic Theory Dynam. Systems 3}, 2 (1983), 219--230.

\bibitem{hilgert2017contraction}
{\sc Hilgert, J., Manon, C., and Martens, J.}
\newblock Contraction of {H}amiltonian {$K$}-spaces.
\newblock {\em Int. Math. Res. Not. IMRN}, 20 (2017), 6255--6309.

\bibitem{hoffman2025stratified}
{\sc Hoffman, B., and Lane, J.}
\newblock Stratified gradient {{Hamiltonian}} vector fields and collective
  integrable systems.
\newblock {\em J. Eur. Math. Soc.\/} (2025), published online first, DOI
  10.4171/JEMS/1659.

\bibitem{ler:95}
{\sc Lerman, E.}
\newblock Symplectic cuts.
\newblock {\em Math. Res. Lett. 2}, 3 (1995), 247--258.

\bibitem{lerman1998Nonabelian}
{\sc Lerman, E., Meinrenken, E., Tolman, S., and Woodward, C.}
\newblock Non-abelian convexity by symplectic cuts.
\newblock {\em Topology 37}, 2 (1998), 245--259.

\bibitem{marsden1974Reduction}
{\sc Marsden, J., and Weinstein, A.}
\newblock Reduction of symplectic manifolds with symmetry.
\newblock {\em Rep. Mathematical Phys. 5}, 1 (1974), 121--130.

\bibitem{mar-rat:86}
{\sc Marsden, J.~E., and Ratiu, T.}
\newblock Reduction of {P}oisson manifolds.
\newblock {\em Lett. Math. Phys. 11}, 2 (1986), 161--169.

\bibitem{mik-wei:88}
{\sc Mikami, K., and Weinstein, A.}
\newblock Moments and reduction for symplectic groupoids.
\newblock {\em Publ. Res. Inst. Math. Sci. 24}, 1 (1988), 121--140.

\bibitem{sjamaar1991Stratified}
{\sc Sjamaar, R., and Lerman, E.}
\newblock Stratified symplectic spaces and reduction.
\newblock {\em Ann. of Math. (2) 134}, 2 (1991), 375--422.

\end{thebibliography}
	
\end{document}